\newtheorem{thm}{Theorem}[section]
\newtheorem{lem}[thm]{Lemma}
\theoremstyle{definition}
\newtheorem{defn}[thm]{Definition}
\theoremstyle{remark}
\newtheorem{rem}[thm]{Remark}
\newtheorem{exmp}[thm]{Example}
\DeclarePairedDelimiter\ceil{\lceil}{\rceil}
\newcommand{\RR}{\mathbb{R}}                
\newcommand{\ZZ}{\mathbb{Z}}                
\newcommand{\NN}{\mathbb{N}}                
\newcommand{\Fix}{\mathbb{F}\mathrm{ix}}    
\newcommand{\Ela}{\mathbb{E}\mathrm{la}}    
\newcommand{\Piez}{\mathbb{P}\mathrm{iez}}  
\newcommand{\HH}{\mathbb{H}}                
\newcommand{\Sym}{\mathbb{S}}               
\newcommand{\VV}{\mathbb{V}}                
\newcommand{\GL}{\mathrm{GL}}               
\newcommand{\OO}{\mathrm{O}}                
\newcommand{\SO}{\mathrm{SO}}               
\newcommand{\octa}{\mathbb{O}}              
\newcommand{\DD}{\mathbb{D}}                
\newcommand{\triv}{\mathds{1}}		        
\newcommand{\vv}{\pmb{v}}                   
\newcommand{\ww}{\pmb{w}}                   
\newcommand{\xx}{\pmb{x}}                   
\newcommand{\yy}{\pmb{y}}                   
\newcommand{\re}{\mathrm{e}}
\newcommand{\ba}{\mathbf{a}}
\newcommand{\bb}{\mathbf{b}}
\newcommand{\bd}{\mathbf{d}}
\newcommand{\be}{\mathbf{e}}
\newcommand{\bq}{\mathbf{q}}                
\newcommand{\bg}{\mathbf{g}}
\newcommand{\bh}{\mathbf{h}}
\newcommand{\bp}{\mathbf{p}}
\newcommand{\bv}{\mathbf{v}}
\newcommand{\bE}{\mathbf{E}}                
\newcommand{\bH}{\mathbf{H}}                
\newcommand{\bA}{\mathbf{A}}                
\newcommand{\bT}{\mathbf{T}}                
\newcommand{\lc}{\pmb \varepsilon}          
\DeclareMathOperator{\tr}{tr}
\DeclareMathOperator{\Orb}{Orb}
\DeclareMathOperator{\grad}{grad}
\DeclareMathOperator{\3dots}{\raisebox{-0.25ex}{\vdots}}
\newcommand{\norm}[1]{\lVert#1\rVert}                   
\newcommand{\abs}[1]{\lvert#1\rvert}                    
\newcommand{\set}[1]{\left\{#1\right\}}                 
\newcommand{\strata}[1]{\Sigma_{[#1]}}	                
\newcommand{\cstrata}[1]{\overline{\Sigma}_{[#1]}}	    
\begin{document}
\title[Distance to a constitutive tensor isotropy stratum]{Distance to a constitutive tensor isotropy stratum \protect\\
  by Lasserre polynomial optimization method}%

\author{P. Azzi}
\address[Perla Azzi]{CNRS, Sorbonne université, IMJ - Institut de mathématiques Jussieu, 75005, Paris, France \& Université Paris-Saclay, CentraleSupélec, ENS Paris-Saclay, CNRS, LMPS - Laboratoire de Mécanique Paris-Saclay, 91190, Gif-sur-Yvette, France}
\email[Perla Azzi]{perla.azzi@ens-paris-saclay.fr}

\author{R. Desmorat}
\address[Rodrigue Desmorat]{Université Paris-Saclay, CentraleSupélec, ENS Paris-Saclay, CNRS, Laboratoire de Mécanique Paris-Saclay, 91190, Gif-sur-Yvette, France.}
\email{rodrigue.desmorat@ens-paris-saclay.fr}

\author{B. Kolev}
\address[Boris Kolev]{Université Paris-Saclay, CentraleSupélec, ENS Paris-Saclay, CNRS, Laboratoire de Mécanique Paris-Saclay, 91190, Gif-sur-Yvette, France.}
\email{boris.kolev@ens-paris-saclay.fr}

\author{F. Priziac}
\address[Fabien Priziac]{Univ Bretagne Sud, CNRS UMR 6205, LMBA, F-56000 Vannes, France}
\email{fabien.priziac@univ-ubs.fr}

\date{July 9, 2022}
\subjclass[2020]{90C23; 14P10; 90C22; 74B05; 74E10}
\keywords{polynomial optimization; Lasserre's method; semidefinite programming; distance to a symmetry class; cubic symmetry; elasticity; piezoelectricity; semialgebraic and real algebraic geometry}%

\thanks{The authors were partially supported by CNRS Projet 80--Prime GAMM (Géométrie algébrique complexe/réelle et mécanique des matériaux).}


\begin{abstract}
  We give a detailed description of a polynomial optimization method allowing to solve a problem in continuum mechanics: the determination of the elasticity or the piezoelectricity tensor of a specific isotropy stratum the closest to a given experimental tensor, and the calculation of the distance to the given tensor from the considered isotropy stratum. We take advantage of the fact that the isotropy strata are semialgebraic sets to show that the method, developed by Lasserre and coworkers which consists in solving polynomial optimization problems with semialgebraic constraints, successfully applies.
\end{abstract}

\maketitle


\begin{scriptsize}
  \setcounter{tocdepth}{2}
  \tableofcontents
\end{scriptsize}

\section{Introduction}
\label{sec:intro}

In mechanics, \emph{linear constitutive laws} are described by the orbit space of a representation of the three-dimensional orthogonal group on the vector space of the considered \emph{constitutive tensors}~$\bT$~\cite{LC1985,Cia1988,EM1990}. This orbit space is endowed with a natural stratification by \emph{isotropy classes} $[H]$, the strata $\strata{H}$ being the set of tensors with symmetry group conjugate to $H$.

The symmetry group of a measured (raw) tensor $\bT_{0}$ is in general trivial. However, in practice, appealing to Curie principle---\emph{the symmetries of the causes are to be found in the effects}---a symmetry of a constitutive tensor is often expected by observing the micro-structure of a material~\cite{Art1993,FBG1996,FGB1998}. For instance, the elasticity tensor of a single crystal alloy with cubic crystal network is expected to be cubic ($[\octa]$, see figure~\ref{fig:microstructure}), the piezoelectric tensor of an aluminum nitride (AlN) alloyed with rocksalt transition metal nitrides is expected to become cubic ($[\octa^{-}]$) for a high chromium concentration~\cite{MTG2018}. The mechanical problem thus comes down to the computation of the distance $d(\bT_{0}, \cstrata{H})$ of a raw constitutive tensor $\bT_{0}$ to a closed isotropy stratum $\cstrata{H}$.

In linear elasticity, which involves a fourth-order tensor $\bE$, the distance to an isotropy stratum has been formulated as the minimization problem~\cite{GTT1963,FGB1998,Del2005,MN2006,BS2008}
\begin{equation*}
  \min_{\bE\in \cstrata{H}} \norm{\bE_{0}-\bE},
  \qquad
  \bE=\rho_{4}(g) \bA,
  \qquad  \bA\in \Fix(H), \, g\in \SO(3),
\end{equation*}
with the natural parameterization by \emph{normal form} $\bA$ (a fixed point set for a representative symmetry group $H$) and \emph{rotation} $g$. This problem has, however, many local minima and several global minima, making the determination of all the solutions numerically difficult.

In this paper, we formulate the computation of the distance to an isotropy stratum as a polynomial optimization problem. To do so, we make use of the property that the closed isotropy strata $\cstrata{H}$ are basic closed semialgebraic sets~\cite{AS1981,AS1983,PS1985,Sch1989}. For the fourth-order elasticity tensor, an explicit characterization of the closed strata $\cstrata{H}$ by polynomial equations and inequalities has recently been obtained, by means of polynomial covariants~\cite[Theorem 10.2]{OKDD2021}. Since such a result is not yet available in piezoelectricity, we have provided in theorem~\ref{thm:Pcubic} a polynomial characterization of the cubic symmetry stratum $\cstrata{\octa^{-}}$ for the third-order piezoelectricity tensor.

We formulate the distance problems in question in such a way that we can apply a semialgebraic optimization method designed by Lasserre and coworkers~\cite{Las2001,Las2009,Las2015,HLL2009,JLL2014} to compute directly the global miminum of a polynomial function over polynomial constraints describing a basic closed semialgebraic set. This method consists in building a sequence of semidefinite programs whose optimal values converge to the desired minimum, under some hypothesis on the constraints. The benefit is that there exist efficient algorithms to solve numerically semidefinite programs, based on methods used in linear programming~\cite{Dan1963,LY2008,Van2020}, such as the ellipsoid method~\cite{GLS1981} or the interior point method~\cite{NN1994,Ali1995,NT1997,Tod2001,Jar1993,BCM2007,BCR2013,DHer2012}. The considered algorithm has been implemented by Lasserre and Henrion~\cite{HLL2009} in a Matlab freeware, named GloptiPoly~\cite{HL2003}, that aims to solve a sequence of relaxed semidefinite programs using SeDuMi (a Matlab toolbox for solving semidefinite programs created by Sturm~\cite{Stu1997,Stu1999}). Moreover, this algorithm, when its stopping criterion is satisfied, extracts (approximated) minimizers for the considered minimized function.

\subsection*{Organization of the paper}

The paper is organized as follows. In \autoref{sec:sym_classes}, we recall basic material on isotropy classes and we pose the problem of the distance to an isotropy class. In \autoref{sec:poly-opt}, we introduce polynomial optimization and its formulation to semidefinite programs. In  \autoref{sec:poly-opt} and \autoref{sec:algo}, we describe the Lasserre and coworkers method for solving polynomial optimization problems with semialgebraic constraints as well as the corresponding algorithm, implemented as the software GloptiPoly. As a direct application, we deal with three examples of constitutive tensors. In \autoref{sec:isoT}, we illustrate the method with the academic example of the distance of a symmetric second-order tensor to the transversely isotropic stratum $\cstrata{\OO(2)}$. In \autoref{sec:ela}, we compute the distance of an experimental elasticity (fourth-order) tensor of a Nickel-based single crystal superalloy to the cubic stratum $\cstrata{\octa}$, and consequently we extract the cubic elasticity tensor the closest to the experimental one. Finally, in \autoref{sec:piezo}, we detail how polynomial optimization allows to find the cubic piezoelectricity (third-order) tensors, in $\cstrata{\octa^{-}}$, the closest to raw tensors for wurtzite alloys.

All the tensorial components will be expressed with respect to an orthonormal basis. Hence, no distinction will be made between covariant and contravariant components. The notation $\bq=(\delta_{ij})$ stands for the Euclidean metric tensor.

\section{Isotropy classes and strata -- Distance to an isotropy stratum}
\label{sec:sym_classes}

Let $G$ be a compact group and $\rho : G \to \GL(\VV)$ be a continuous representation of $G$ on a finite dimensional real vector space $\VV$. Given $\vv \in \VV$, its \emph{orbit} is the subset of $\VV$ defined by
\begin{equation*}
  \Orb(\vv) := \set{\rho(g)\vv;\; g\in G},
\end{equation*}
and its \emph{symmetry group} (or isotropy group) is defined as
\begin{equation*}
  G_{\vv} := \set{g\in G;\; \rho(g)\vv=\vv}.
\end{equation*}

The concept of symmetry group allows to define an equivalence relation on $\VV$, which is \emph{coarser} than the relation ``to be in the same orbit'' and defined as follows: two vectors $\vv_{1}$ and $\vv_{2}$ \emph{have the same isotropy class} (or same symmetry class in mechanics~\cite{FV1996,FV1997}) if they have conjugate symmetry groups. In the following, we shall denote by
\begin{equation*}
  [H] := \set{g H g^{-1};\; g\in G}
\end{equation*}
the conjugacy class of the subgroup $H$ of $G$. To each conjugacy class $[H]$, where $H$ is a closed subgroup of $G$, corresponds the subset of $\VV$ defined by
\begin{equation*}
  \strata{H} := \set{\vv \in \VV;\; [G_{\vv}] = [H]}.
\end{equation*}
If this subset is not empty, $[H]$ is called an \emph{isotropy class} and $\strata{H}$ is the \emph{isotropy stratum associated to $[H]$}. It is known (see~\cite{Mos1957,Bre1960,Man1962}) that there is only a finite number of isotropy classes for any finite dimensional representation of a compact group.

The set of conjugacy classes $[H]$ of closed subgroups of a compact group is endowed with a partial order relation (reflexivity and transitivity are direct and true even if $G$ is not compact but anti-symmetry requires the compacity of $G$~\cite[Proposition 1.9]{Bre1972}), given by
\begin{equation*}
  [H]\preceq [K]\iff \exists g\in G,\quad gHg^{-1}\subset K.
\end{equation*}

Due to the order relation defined on the conjugacy classes, we define a \emph{closed stratum} to be the set consisting of vectors having \emph{at least the symmetry $[H]$, denoted by $\overline{\Sigma}_{[H]}$, and defined by}
\begin{equation*}
  \cstrata{H} = \set{\vv \in \VV;\; [H] \preceq [G_{\vv}]} = \bigcup_{[H] \preceq [K]} \Sigma_{[K]}.
\end{equation*}

The isotropy stratum $\Sigma_{[H]}$ and the closed isotropy stratum $\overline{\Sigma}_{[H]}$ are semialgebraic sets~\cite{AS1981,AS1983,PS1985,Sch1989}, \textit{i.e} defined by polynomial equations and inequalities~\cite{Cos2002,BCR2013}. Actually, if $G$ is a subgroup of $\GL(\VV)$, we can give a direct proof of this fact. Indeed, if $G$ is a compact subgroup of $\GL(\VV)$, $G$ is a real algebraic set by~\cite[Chapter 3, paragraph 4, Theorem 5]{OV1990}. Notice that so is the subset $G_{\vv}$, if $\vv \in \VV$, as it is described by polynomial equations in the coefficients of the matrices of the real algebraic set $G$. Now, since $H$ is a closed subgroup of $G$, $H$ is in particular a compact subgroup of $\GL(\VV)$ and then a real algebraic set as well. As a consequence, the closed isotropy stratum
\begin{equation*}
  \overline{\Sigma}_{[H]} = \set{\vv \in \VV;\; \exists g\in G, \forall h \in H, ghg^{-1}\vv = \vv},
\end{equation*}
and the isotropy stratum
\begin{equation*}
  \Sigma_{[H]} = \set{\vv \in \VV;\; \exists g\in G, \forall h \in H, ghg^{-1} \vv = \vv  \mbox{~~and~~} \exists g' \in G, \forall k \in G, k\vv = \vv \Rightarrow g'k{g'}^{-1} \in H},
\end{equation*}
are both described by \emph{first-order formulae} (in the sense of~\cite[Definition 2.2.3]{BCR2013}) so that the sets $\overline{\Sigma}_{[H]}$ and $\Sigma_{[H]}$ are semialgebraic sets by~\cite[Proposition 2.2.4]{BCR2013} (the latter cited result is an avatar of \emph{Tarski-Seidenberg theorem} which is an angular stone of semialgebraic geometry).

We shall introduce the distance of a vector $\vv_{0} \in \VV$ to the closed isotropy stratum $\cstrata{H}$
\begin{equation}\label{eq:distance}
  \Delta(\vv_{0},\cstrata{H})^{2}
  :=\underset{\vv\in\cstrata{H}}{\min}  \norm{\vv_{0}-\vv}^{2},
\end{equation}
for some $G$-invariant norm $\norm{\cdot}$. A minimizer will be denoted by $\vv^{*}$.

Examples of interest for the present work are provided by Continuum Mechanics, for which $G$ is either $\SO(3)$ or $\OO(3)$, $\VV$ is a space of tensors on $\RR^{3}$, endowed with the invariant norm
\begin{equation*}
  \norm{\bT}:=\sqrt{T_{i_{1} \dotsc i_{n}} T_{i_{1} \dotsc i_{n}}},
\end{equation*}
and the action on a tensor $\bT$ is written (in an orthonormal basis)
\begin{equation*}
  (\rho(g)\bT)_{i_{1} \dotsc i_{n}} := {g_{i_{1}}}^{j_{1} } \dotsc {g_{i_{n}}}^{j_{n}} T_{j_{1} \dotsc j_{n}},
  \quad
  \bT\in \VV, \;
  g\in G.
\end{equation*}
Finally, the $\OO(3)$-subgroups will be denoted according to the notations in~\cite{GSS1988}.

\begin{exmp}\label{ex:Ela}
  In elasticity,
  \begin{equation*}
    \VV=\Ela=\Sym^{2}(\Sym^{2}(\RR^{3}))=\set{\bE\in \otimes^{4}\RR^{3},\, E_{ijkl}=E_{jikl}=E_{klij}}
  \end{equation*}
  is the 21-dimensional vector space of elasticity tensors, and $G=\SO(3)$. In that case there are
  exactly eight isotropy classes $[\triv]$, $[\ZZ_{2}]$, $[\DD_{2}]$, $[\DD_{3}]$, $[\DD_{4}]$, $[\OO(2)]$, $[\octa]$, $[\SO(3)]$~\cite{FV1996},
  and the problem of the distance to an elasticity isotropy stratum has been investigated in~\cite{Via1997,FGB1998,Del2005,MN2006,KS2008,KS2009}.
\end{exmp}

\begin{exmp}\label{ex:Piez}
  In piezoelectricity,
  \begin{equation*}
    \VV=\Piez=\set{\be\in \otimes^{3}\RR^{3},\, \re_{ijk}=\re_{ikj}}
  \end{equation*}
  is the 18-dimensional vector space of piezoelectricity tensors,	and $G=\OO(3)$. In that case there are exactly 16 isotropy classes
  $[\triv]$, $[\ZZ_{2}]$, $[\ZZ_{3}]$, $[\DD_{2}^{z}]$, $[\DD_{3}^{z}]$, $[\ZZ_{2}^{-}]$, $[\ZZ_{4}^{-}]$,   $[\DD_{2}]$, $[\DD_{3}]$, $[\DD_{4}^{d}]$, $[\DD_{6}^{d}]$, $[\SO(2)]$, $[\OO(2)]$, $[\OO(2)^{-}]$, $[\octa^{-}]$, $[\OO(3)]$~\cite{Nye1985,ZB1994,OA2021},
  and the problem of the distance to a piezoelectricity isotropy stratum has been investigated in~\cite{ZTP2013}.
\end{exmp}

When polynomial equations and/or inequalities characterizing the semialgebraic set $\cstrata{H}$ are known (see~\cite{AKP2014,OKDD2021}), the distance to an isotropy stratum problem~\eqref{eq:distance} reduces to minimize a polynomial function (the quadratic function $\Delta(\, \cdot \,,\cstrata{H})^{2}$) under polynomial constraints. In that case, we can solve the distance to an isotropy stratum problem using \emph{polynomial and semialgebraic optimization}~\cite{Las2015,Lau2009,Mev2007,Sch2005}, which allows to approximate numerically the global minimum of the function $\Delta(\, \cdot \,,\cstrata{H})^{2}$.

\section{Semialgebraic optimization method}
\label{sec:poly-opt}

The problem of determining the constitutive tensor having a specific symmetry the closest to an experimental one can be viewed as an example of the problem of minimizing a polynomial function over polynomial constraints
\begin{equation}\label{eq:pmin}
  f^{*} = \inf\set{f(\xx);\; \xx\in K},
\end{equation}
where $f\in\RR[X]:=\RR[X_{1},\dotsc,X_n]$, $\xx = (x_{1},\ldots,x_n) \in \RR^{n}$ and $K$ is a basic closed semialgebraic set
\begin{equation*}
  K = \set{\xx \in \RR^{n};\; g_{1}(\xx) \ge 0, \dotsc ,g_{m}(\xx)\ge 0},
\end{equation*}
with $g_{1},\dotsc, g_{m} \in \RR[X]$ (see~\cite{Cos2002,BCR2013} for self-contained references on semialgebraic geometry).

We now describe \emph{Lasserre's method}~\cite{Las2001,Las2015}, that will allow us to solve numerically the problem of the distance from an experimental tensor to a closed stratum. The method consists in constructing a sequence of semidefinite programs whose optimal values form a nondecreasing sequence which converges to the optimum $f^{*}$.

In this section, apart from theorem~\ref{thm:rank_condition} which is a refinement of~\cite[Theorem 6.2]{Las2015}, there is no original statement: we give the essential steps and results of the approach for pedagogical reasons and to be self-contained. For more details on Lasserre's method and the involved mathematical results and background, we refer to~\cite{Las2015,Lau2009,Sch2005}.

The first step of the method is to notice that the optimization problem \eqref{eq:pmin} can be reformulated as
\begin{equation}\label{eq:pprob}
  f^{*}=\inf\set{\int_{\RR^{n}} f {\rm d}\mu ;\; \mu\ \text{probability measure on $\RR^{n}$ with support in $K$}}.
\end{equation}
Indeed, for $\xx\in K$, if $\delta_{\xx}$ denotes the probability Dirac measure on $\RR^{n}$ at $\xx$, we have $f(\xx)=\int f {\rm d}\delta_{\xx}$ and, conversely, if $\mu$ is a probability measure on $\RR^{n}$ with support in $K$,
\begin{equation*}
  \int_{\RR^{n}} f(\xx){\rm d}\mu(\xx) \ge \int_{\RR^{n}} f^{*} {\rm d}\mu(\xx)=f^{*}.
\end{equation*}

Now, if $(y_{\alpha})_{\alpha\in \mathbb{N}^{n}}$ is a sequence of real numbers, denote by $M(y)$ the infinite symmetric matrix
\begin{equation*}
  \left(y_{\alpha+\beta}\right)_{\alpha, \beta \in \NN^{n}},
\end{equation*}
(called the \emph{moment matrix associated to $y$}) and, if $g = \sum_{\beta \in \NN^{n}} g_{\beta} X^{\beta}$, set
\begin{equation*}
  g \cdot y := \left(\sum_{\beta \in \NN^{n}} g_{\beta} y_{\alpha + \beta}\right)_{\alpha \in \NN^{n}} \in \RR^{\NN^{n}}.
\end{equation*}
Under an hypothesis called the \emph{Archimedean hypothesis}, we can write the optimization problem (\ref{eq:pprob}) as
\begin{equation}\label{eq:pmom}
  f^{*} = \inf_{y \in \RR^{\NN^{n}}}\set{ \langle f,y\rangle ;\; y_{(0,\ldots,0)} = 1, \, M\left(g_i \cdot y \right) \succeq 0, \, i = 0,\ldots,m}
\end{equation}
where, if $y \in \RR^{\NN^{n}}$, $\langle f,y\rangle := \sum_{\alpha} f_{\alpha} y_{\alpha}$, and $g_{0} := 1$. Here, if $M$ is a finite or infinite matrix with real coefficients, $M \succeq 0$ means that $M$ is positive semidefinite (an infinite symmetric matrix is called positive semidefinite if all its principal submatrices are positive semidefinite). The formulation~\eqref{eq:pmom} is a direct consequence of the following solution of the moment problem on~$K$.

\begin{thm}[Putinar, Jacobi--Prestel]\label{prop:sol-mom-pb-archi}
  Suppose that the polynomials $g_{1},\ldots,g_{m}$ describing $K$ satisfy the Archimedean hypothesis. Then, for all $y \in \RR^{\NN^{n}}$, $y$ has a representative measure on $K$ (i.e. there exists a finite Borel measure $\mu$ on $\RR^{n}$ with support in $K$ such that, for any $\alpha \in \mathbb{N}^{n}$, $y_{\alpha}=\int_{\RR^{n}} \xx^{\alpha} {\rm d}\mu(\xx)$) if and only if the moment matrices $M(y)$, $M(g_{1} \cdot y), \ldots, M(g_{m} \cdot y)$ are positive semidefinite.
\end{thm}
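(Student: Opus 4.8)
The plan is to prove the two implications separately; the ``only if'' direction is elementary, and the ``if'' direction is \emph{Putinar's theorem} (the solution of the $K$-moment problem for Archimedean quadratic modules), which I sketch below. \emph{Necessity.} Suppose $\mu$ is a finite Borel measure supported in $K$ with $y_{\alpha} = \int_{\RR^{n}} \xx^{\alpha}\,{\rm d}\mu(\xx)$ for every $\alpha \in \NN^{n}$. Then, for any polynomial $p = \sum_{\alpha} p_{\alpha} X^{\alpha}$ with finitely many nonzero coefficients,
\begin{align*}
  \sum_{\alpha, \beta} p_{\alpha} p_{\beta}\, y_{\alpha + \beta} &= \int_{\RR^{n}} p(\xx)^{2}\,{\rm d}\mu(\xx) \ge 0 , \\
  \sum_{\alpha, \beta} p_{\alpha} p_{\beta}\, (g_{i} \cdot y)_{\alpha + \beta} &= \int_{\RR^{n}} g_{i}(\xx)\, p(\xx)^{2}\,{\rm d}\mu(\xx) \ge 0 ,
\end{align*}
the last inequality because $g_{i} \ge 0$ on $\operatorname{supp}\mu \subseteq K$; as $p$ is arbitrary, every principal submatrix of $M(y), M(g_{1} \cdot y), \dotsc, M(g_{m} \cdot y)$ is positive semidefinite, i.e. these moment matrices are positive semidefinite.

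\emph{Sufficiency.} I would pass to the Riesz functional $L_{y} : \RR[X] \to \RR$, $L_{y}(p) := \langle p, y \rangle = \sum_{\alpha} p_{\alpha} y_{\alpha}$. If $y_{(0,\dotsc,0)} = 0$, positive semidefiniteness of the $2\times2$ submatrix of $M(y)$ indexed by $(0,\dotsc,0)$ and $\alpha$ forces $y_{\alpha}^{2} \le y_{0}\,y_{2\alpha} = 0$, so $y = 0$ and the zero measure works; hence we may assume $y_{(0,\dotsc,0)} > 0$ and, dividing $y$ by this value (which only rescales the measure to be produced), that $L_{y}(1) = 1$. The hypotheses say precisely that $L_{y}(p^{2}) \ge 0$ and $L_{y}(g_{i}\,p^{2}) \ge 0$ for all $p \in \RR[X]$, i.e. that $L_{y}$ is nonnegative on the quadratic module
\begin{equation*}
  Q := \set{\sigma_{0} + \sigma_{1} g_{1} + \dotsb + \sigma_{m} g_{m} ;\; \sigma_{0}, \dotsc, \sigma_{m}\ \text{sums of squares in}\ \RR[X]} ,
\end{equation*}
which, by the Archimedean hypothesis, contains $N - \sum_{j=1}^{n} X_{j}^{2}$ for some $N > 0$ (an equivalent form of that hypothesis being: for every $p$ there is $k \in \NN$ with $k \pm p \in Q$); in particular $K$ is compact. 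It then remains to prove \emph{Putinar's theorem}: a linear functional $L$ on $\RR[X]$ with $L(1) = 1$ that is nonnegative on an Archimedean quadratic module $Q$ is of the form $L(p) = \int_{K} p\,{\rm d}\mu$ for a Borel probability measure $\mu$ on $K$. Applied to $L = L_{y}$ and evaluated at monomials, this gives $y_{\alpha} = \int_{K} \xx^{\alpha}\,{\rm d}\mu$, whence the claim.

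For Putinar's theorem I would argue in two stages. The \emph{algebraic core} is the certificate: if $p > 0$ on $K$ then $p \in Q$. One first derives the ring-Archimedean property (for every $p$, some $k \pm p \in Q$) from $N - \sum_{j} X_{j}^{2} \in Q$: the set of such $p$ is a subring of $\RR[X]$ --- it contains $1$, is closed under sums, and is closed under products by the stability of $Q$ under multiplication by squares --- and it contains each coordinate, since $2k \pm 2X_{j} = (2k - 1 - N) + (1 \pm X_{j})^{2} + \bigl(N - \sum_{\ell} X_{\ell}^{2}\bigr) + \sum_{\ell \ne j} X_{\ell}^{2} \in Q$ once $2k \ge N+1$. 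Then, if some $p > 0$ on $K$ were not in $Q$, a maximal (by Zorn's lemma) proper quadratic module $Q' \supseteq Q$ with $p \notin Q'$ is closed under multiplication and has $Q' \cap (-Q')$ a prime ideal, hence corresponds to an evaluation homomorphism $\varphi : \RR[X] \to \RR$ at a point $\xx_{0} \in \RR^{n}$ with $\varphi \ge 0$ on $Q$; then $g_{i}(\xx_{0}) = \varphi(g_{i}) \ge 0$ forces $\xx_{0} \in K$, while $\varphi(p) = p(\xx_{0}) \le 0$ contradicts $p > 0$ on $K$.

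For the \emph{analytic wrap-up}, take $h \in C(K)$ and polynomials $p_{n} \to h$ uniformly on $K$ (possible since $K$ is compact, by Stone--Weierstrass): the core gives $\norm{h}_{\infty,K} + \tfrac1n \pm p_{n} \in Q$ for $n$ large, so $\abs{L(p_{n})} \le \norm{h}_{\infty,K} + \tfrac1n$ and, applying the same estimate to differences, $(L(p_{n}))_{n}$ is Cauchy; therefore $L$ extends to a positive bounded functional on $C(K)$, to which the Riesz representation theorem associates the probability measure $\mu$ on $K$. (An equivalent route is a Gelfand--Naimark--Segal construction from $L$: the Archimedean bound makes the multiplication operators by $X_{1},\dotsc,X_{n}$ bounded, commuting and self-adjoint, and their joint spectral measure --- supported in $K$ because each $g_{i}$ acts as a positive operator --- delivers $\mu$.) The main obstacle is the algebraic core: the Archimedean hypothesis is genuinely needed there, since for a merely compact $K$ the certificate can fail --- this is the gap between Putinar's theorem and Schm\"udgen's, and the reason the Archimedean condition is imposed. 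The remaining ingredients --- necessity, the subring argument, and the Riesz/GNS wrap-up --- are routine.
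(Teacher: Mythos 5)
Your necessity argument is the same as the paper's, and your sufficiency argument is correct, but it takes a genuinely different route for the analytic step. The paper's proof applies Putinar's Positivstellensatz (which it states separately, as a cited black box) to conclude that $L_y$ is nonnegative on every polynomial positive on $K$, then does the small $\epsilon$-shift to get nonnegativity on polynomials nonnegative on $K$, and finally invokes \emph{Haviland's theorem} to produce the representing measure. You instead exploit the compactness of $K$ directly: starting from the same Positivstellensatz, you show $L_y$ is bounded on $\RR[X]$ in the sup-norm over $K$, extend it by Stone--Weierstrass to a positive bounded functional on $C(K)$, and then use the classical \emph{Riesz representation theorem} (or, alternatively, the GNS-plus-joint-spectral-measure route). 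This is a trade-off: Haviland's theorem is the more flexible tool (it works for any closed $K$ and avoids the density/extension step), whereas your Stone--Weierstrass/Riesz wrap-up relies on compactness --- which here holds automatically from the Archimedean hypothesis --- but has the virtue of resting only on the standard Riesz theorem for $C(K)$, so the proof is more self-contained for a reader who does not already know Haviland. You also go a step further than the paper by sketching the proof of the Positivstellensatz itself (the subring argument for the ring-Archimedean property, followed by the Zorn argument on maximal quadratic modules), which the paper simply cites. Finally, the reduction at the start handling the case $y_{(0,\dotsc,0)}=0$ via the $2\times2$ principal minor is a tidy detail the paper leaves implicit; it correctly shows that $y=0$ in that degenerate case.
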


This statement is a reformulation of~\cite[theorem 2.44]{Las2015} and is due to Putinar (\cite{Put1993}) and Jacobi--Prestel (\cite{JP2001}). We recall the proof below but, first, we have to define the essential \emph{Archimedean hypothesis}.

\begin{defn}\label{def:archimedean_prop}
  Consider the $\RR$-module
  \begin{equation*}
    \mathbf{M}(g_{1},\dotsc,g_{m}):=\set{\sigma_{0} + \sum_{i = 1}^{m} \sigma_{i} g_{i};\; \sigma_{0}, \dotsc, \sigma_{m} \text{ sum of squares}} \subset \RR[X]
  \end{equation*}
  (a polynomial $p \in \RR[X]$ is a \emph{sum of squares} if there exist polynomials $p_{1},\ldots,p_N \in \RR[X]$ such that $p = \sum_{j=1}^N p_j^{2}$). We say that the polynomials $g_{1},\ldots,g_{m}$ satisfy the \emph{Archimedean hypothesis} (or that $\mathbf{M}(g_{1},\dotsc,g_{m})$ is an \emph{Archimedean module}) if there exists a positive integer $N$ such that
  \begin{equation*}
    N - \sum_{i=1}^{n} X_{i}^{2} \in  \mathbf{M}(g_{1},\dotsc,g_{m}).
  \end{equation*}
\end{defn}

We refer to~\cite[Theorem 1.1]{Sch2005} (a result due to Schmüdgen) for a list of properties equivalent to the Archimedean hypothesis. Notice that if $g_{1},\ldots,g_{m}$ satisfy the Archimedean hypothesis, then
\begin{equation*}
  K = \set{\xx \in \RR^{n};\; g_{1}(\xx) \ge 0, \dotsc ,g_{m}(\xx)\ge 0}
\end{equation*}
is necessarily compact. The crucial point is that, if $g_{1},\ldots,g_{m}$ satisfy the Archimedean hypothesis, then we have access to Putinar's Positivstellensatz.

\begin{thm}[Putinar~\cite{Put1993}]\label{thm:putinar_pos}
  Suppose that the polynomials $g_{1},\ldots,g_{m}$ describing $K$ satisfy the Archimedean hypothesis, and let $p \in \RR[X]$. If $p(K) \subset ]0;+\infty[$, then $p \in \mathbf{M}(g_{1},\ldots,g_{m})$.
\end{thm}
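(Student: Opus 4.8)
The plan is to argue by contradiction, using a Hahn--Banach separation to produce a ``state'' on $\RR[X]$ and then a Gelfand--Naimark--Segal construction to realise that state as integration against a probability measure carried by $K$. Write $\mathbf{M} := \mathbf{M}(g_1,\dots,g_m)$: it is a convex cone in $\RR[X]$, stable under addition and under multiplication by sums of squares, containing $1$, every sum of squares and every $\sigma g_i$ with $\sigma$ a sum of squares. Assume, for contradiction, that $p\notin\mathbf{M}$. The first thing I would establish is the standard consequence of the Archimedean hypothesis that $1$ is an \emph{order unit} for $\mathbf{M}$: from $N-\sum_i X_i^2\in\mathbf{M}$ one gets, for every $f$, some $N_f>0$ with $N_f\pm f\in\mathbf{M}$ --- first for $f=X_i$ using $(1\pm X_i)^2$ together with $N-X_i^2=\big(N-\sum_j X_j^2\big)+\sum_{j\neq i}X_j^2\in\mathbf{M}$, then for arbitrary $f$ via $4fg=(f+g)^2-(f-g)^2$ and the identity $N^k-(\sum_i X_i^2)^k=(N-\sum_i X_i^2)\cdot(\text{sum of squares})$. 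It follows that $\norm{f}:=\inf\{t>0:\ t-f\in\mathbf{M}\ \text{and}\ t+f\in\mathbf{M}\}$ is a finite seminorm on $\RR[X]$, and that $1$ is an interior point of $\mathbf{M}$ for the associated topology: if $\norm{f-1}<1$, choosing $0<t<1$ with $t\pm(f-1)\in\mathbf{M}$ gives $f=\big((f-1)+t\big)+(1-t)\cdot 1\in\mathbf{M}+\mathbf{M}\subseteq\mathbf{M}$.

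Next, since $p\notin\mathbf{M}\supseteq\mathrm{int}\,\mathbf{M}\neq\emptyset$, the geometric Hahn--Banach theorem produces a nonzero $\norm{\cdot}$-continuous linear functional $\ell$ on $\RR[X]$ separating $p$ from $\mathrm{int}\,\mathbf{M}$; passing to closures and using that $\mathbf{M}$ is a cone containing $0$, I may arrange $\ell\geq 0$ on $\mathbf{M}$ and $\ell(p)\leq 0$. Moreover $\ell(1)>0$: one has $\ell(1)\geq 0$ because $1\in\mathbf{M}$, and $\ell(1)=0$ would force $0\leq\ell(N_f\pm f)=N_f\,\ell(1)\pm\ell(f)=\pm\ell(f)$ for every $f$, i.e. $\ell\equiv 0$. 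Normalising $\ell(1)=1$, the sequence $y_\alpha:=\ell(X^\alpha)$ satisfies $y_{(0,\dots,0)}=1$, and, since $\ell$ is nonnegative on sums of squares and on every $\sigma g_i$, the moment matrices $M(y)$ and $M(g_i\cdot y)$ are positive semidefinite (they are the Gram matrices of $h\mapsto\ell(h^2)$ and $h\mapsto\ell(h^2 g_i)$).

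At this point theorem~\ref{prop:sol-mom-pb-archi} would at once provide a representative measure $\mu$ for $y$ with support in $K$; to avoid a circular exposition I would instead run the construction by hand. The form $\langle h_1,h_2\rangle:=\ell(h_1h_2)$ is positive semidefinite, and its kernel $I:=\{h:\ell(h^2)=0\}$ is an ideal (Cauchy--Schwarz together with $\ell(h^4)\leq N\,\ell(h^2)$, which comes from $h^2(N-h^2)\in\mathbf{M}$), so the form descends to an inner product on $\RR[X]/I$, with Hilbert completion $\mathcal{H}$. The multiplication operators $T_i:\overline{h}\mapsto\overline{X_ih}$ are symmetric and bounded, with $\norm{T_i}^2\leq N$ for any $N$ such that $N-X_i^2\in\mathbf{M}$ (then $h^2(N-X_i^2)\in\mathbf{M}$ gives $\ell(h^2 X_i^2)\leq N\,\ell(h^2)$), so the commuting self-adjoint operators $T_1,\dots,T_n$ carry, by the spectral theorem, a compactly supported projection-valued measure $E$ on $\RR^n$ with $q(T_1,\dots,T_n)=\int q\,\dd E$ for every polynomial $q$. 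Taking $\xi:=\overline{1}$ (so $\norm{\xi}^2=\ell(1)=1$), the Borel measure $\mu(B):=\langle E(B)\xi,\xi\rangle$ is a probability measure on $\RR^n$ with $\ell(q)=\langle q(T_1,\dots,T_n)\xi,\xi\rangle=\int_{\RR^n}q\,\dd\mu$ for all $q\in\RR[X]$.

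Finally I would localise the support of $\mu$. Fix $j$: if $\mu(\{g_j<0\})>0$ then $A:=\{g_j\leq-\eta\}$ has $\mu(A)>0$ for some $\eta>0$, and approximating $\mathbf{1}_A$ boundedly in $L^2(\mu)$ by polynomials $q_k$ (Stone--Weierstrass on the compact set $\mathrm{supp}\,\mu$, plus Lusin's theorem) gives $0\leq\ell(q_k^2 g_j)=\int q_k^2 g_j\,\dd\mu\to\int_A g_j\,\dd\mu\leq-\eta\,\mu(A)<0$, a contradiction. Hence $\mathrm{supp}\,\mu\subseteq\bigcap_j\{g_j\geq 0\}=K$, which is compact because the Archimedean hypothesis forces $K$ compact. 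Since $p>0$ on $K$ we then obtain $\ell(p)=\int_K p\,\dd\mu\geq\min_K p>0$, contradicting $\ell(p)\leq 0$; therefore $p\in\mathbf{M}$. The two steps I expect to be genuinely delicate are the order-unit lemma of the first paragraph --- this is the one place the Archimedean hypothesis is really used, and it is exactly what licenses the Hahn--Banach separation --- and the support localisation of the last paragraph; everything else is bookkeeping, and anyone willing to quote theorem~\ref{prop:sol-mom-pb-archi} can collapse the entire third paragraph into a single line.
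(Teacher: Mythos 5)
The paper does not in fact prove Theorem~\ref{thm:putinar_pos}; it states it, attributes it to~\cite{Put1993}, and points to~\cite[section 2]{Sch2005} and~\cite[section 3.7]{Lau2009} for proofs. Indeed, in the paper the logical flow is the reverse of what a naïve reading might suggest: Theorem~\ref{prop:sol-mom-pb-archi} is \emph{deduced from} Putinar's Positivstellensatz together with Haviland's theorem, not the other way around. You were therefore right to flag the circularity and to refuse to invoke Theorem~\ref{prop:sol-mom-pb-archi} in your third paragraph.

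Your argument is the standard operator-theoretic proof, essentially Putinar's original: the Archimedean hypothesis makes $1$ an order unit for $\mathbf{M}(g_1,\dots,g_m)$, hence $1$ is interior for the associated seminorm topology; Eidelheit separation of $p\notin\mathbf{M}$ from $\mathrm{int}\,\mathbf{M}$ yields a nonzero functional $\ell$ with $\ell\ge 0$ on $\mathbf{M}$, $\ell(1)>0$, $\ell(p)\le 0$; the GNS construction plus boundedness of the multiplication operators ($\|T_i\|^2\le N$ coming from $h^2(N-X_i^2)\in\mathbf{M}$) and the spectral theorem produce a probability measure $\mu$ representing $\ell$; localizing $\mathrm{supp}\,\mu\subseteq K$ gives $\ell(p)\ge\min_K p>0$, a contradiction. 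This is correct, modulo the sketched bookkeeping in the order-unit lemma (your appeal to $4fg=(f+g)^2-(f-g)^2$ to reduce products to squares needs the companion identity showing $N\pm f\in\mathbf{M}\Rightarrow N^2-f^2\in\mathbf{M}$, e.g.\ via $N-\tfrac{f^2}{N}=\tfrac12(N-f)(1+f/N)^2+\tfrac12(N+f)(1-f/N)^2$, which is the step actually making $B:=\{f:\exists N,\ N\pm f\in\mathbf{M}\}$ a subring). For the support localization you could avoid Lusin/Stone--Weierstrass entirely by noting that $\langle g_j(T)\,\bar h,\bar h\rangle=\ell(g_j h^2)\ge 0$ on the dense subspace, so $g_j(T)\succeq 0$ and hence $g_j\ge 0$ on the joint spectrum, which contains $\mathrm{supp}\,\mu$; this is both shorter and closer to Putinar's original presentation. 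In sum: correct proof, but a genuinely different route from the paper only in the sense that the paper offers no route at all and cites the literature instead.
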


See also~\cite[section 2]{Sch2005} and~\cite[section 3.7]{Lau2009} for alternative proofs.

\begin{proof}[Proof of theorem~\ref{prop:sol-mom-pb-archi}]
  Let $y \in \RR^{\NN^{n}}$. The direct implication is actually true even if the polynomials $g_{1},\ldots,g_{m}$ do not satisfy the Archimedean hypothesis. Indeed, suppose that $y$ has a representing measure $\mu$ on $K$. Now, take any vector $\bp = \left(p_{\alpha}\right)_{\alpha  \in \NN^{n}}$ of $\RR^{\NN^{n}}$ with finitely many nonzero coordinates and set $p := \sum_{\alpha} p_{\alpha} X^{\alpha}$. If $M$ is any matrix, denote by $M^T$ its transpose. If $g$ is any polynomial of $\set{g_{0},\ldots,g_{m}}$, we have then
  \begin{equation*}
    \bp^{T}M\left(g \cdot y\right)\bp =  \sum_{\alpha,\beta} p_{\alpha}p_{\beta} \left(\sum_{\gamma} g_{\gamma} \int_{\RR^{n}} \xx^{\alpha+\beta+\gamma}{\rm d}\mu(\xx)\right) = \int_{\RR^{n}} g(\xx) p(\xx)^{2} {\rm d} \mu(\xx) \geq 0.
  \end{equation*}
  By definition, for all $\xx \in K$, $g(\xx) \geq 0$, and the support of $\mu$ is included in $K$.

  Conversely, suppose that the matrices $M(y)$, $M(g_{1} \cdot y), \ldots, M(g_{m} \cdot y)$ of $y$ are positive semidefinite, and denote by $L_y$ the linear mapping
  \begin{equation*}
    p = \sum_{\alpha} p_{\alpha} X^{\alpha} \in \RR[X] \mapsto  \sum_{\alpha} p_{\alpha} y_{\alpha} \in \RR.
  \end{equation*}
  Let $g \in \set{g_{0},\ldots,g_{m}}$ and consider the symmetric bilinear form
  \begin{equation*}
    \begin{array}{ccl}
      \RR[X] \times \RR[X] & \rightarrow & \RR                 \\
      (p,q)                & \mapsto     & L_y\left(pqg\right)
    \end{array},
  \end{equation*}
  which is represented by the localizing matrix $M\left(g \cdot y\right)$ in the canonical basis of $\RR[X]$. In particular, for every polynomial $p$, if $\bp$ denotes the vector $\left(p_{\alpha}\right)_{\alpha \in \NN^{n}}$, we have
  \begin{equation*}
    L_y\left(p^{2} g\right) = \bp^{T}M\left(g \cdot y\right)\bp \geq 0
  \end{equation*}
  and, consequently, the linear mapping $L_y$ has nonnegative values on $\mathbf{M}(g_{1},\ldots,g_{m})$. By Putinar's Positivstellensatz~\ref{thm:putinar_pos}, this implies that $L_y$ has nonnegative values on any polynomial $p \in \RR[X]$ such that $p(K) \subset ]0;+\infty[$.

  If $p \in \RR[X]$ satisfies $p(K) \subset [0;+\infty[$ then, for any positive real number $\epsilon$, the polynomial $p+\epsilon$ has positive values on $K$ so that $L_y(p)+ \epsilon = L_y(p+\epsilon) \geq 0$, and therefore $L_y(p) \geq 0$. We can then apply Haviland's theorem (\cite{Hav1935}, see also~\cite[section 3.2]{Mar2008} and Theorem 4.15 and section 4.6 of the up-to-date version of~\cite{Lau2009}) to the mapping $L_y$ : there exists a measure $\mu$ on $\RR^{n}$ with support in $K$ such that $L_y(p)=\int_{\RR^{n}} p(\xx){\rm d}\mu(\xx)$ for all $p\in \RR[X]$. In particular, for all $\alpha \in \NN^{n}$, we have
  \begin{equation*}
    y_{\alpha} = L_y\left(X^{\alpha}\right) = \int_{\RR^{n}} \xx^{\alpha}{\rm d}\mu(\xx)
  \end{equation*}
  \textit{i.e.}, $y$ is the moment sequence of the measure $\mu$.
\end{proof}

From now on, we assume that the polynomials $g_{1},\dotsc,g_{m}$ satisfy the Archimedean hypothesis so that we can write
\begin{equation}\label{eq:pmom2}
  f^{*} = \inf_{y \in \RR^{\NN^{n}}}\set{ \langle f,y\rangle ;\; y_{0} = 1, \, M\left(g_i \cdot y \right) \succeq 0, \, i=0,\dotsc,m},
\end{equation}
where $y_{0} := y_{(0,\ldots,0)}$.

Lasserre's method to solve the optimization problem~\eqref{eq:pmom} consists, then, in \emph{relaxing} this infinite-dimensional problem into a sequence of finite-dimensional problems which are \emph{semidefinite programs}. \emph{Semidefinite programs}, or SDP's, are optimization problems over finite positive semidefinite symmetric matrices which generalize linear programs, and for which there exist efficient algorithms of numerical resolution. SDP-solving algorithms include methods inspired by the ones used in linear programming, such as interior point methods (see for instance the references~\cite{Ali1995,VB1996,Stu1997,Tod2001,Fre2004,WSV2012}).

Below, we follow Lasserre's notations in~\cite[section 6.1.1]{Las2015}. First, if $k \in \NN$, let
\begin{equation*}
  \Lambda(k):=\set{\left(\alpha_{1},\ldots,\alpha_n\right)\in \NN^{n}; \alpha_{1}+\cdots+ \alpha_n \leq k}
\end{equation*}
and, if $y \in \RR^{\Lambda(2k)}$ and $k' \in \NN$ satisfies $k' \leq k$, set
$\displaystyle{M_{k'}(y) := \left(y_{\alpha+\beta}\right)_{\alpha, \beta \in \Lambda(k')}}$. If $g \in \RR[X]$, set
\begin{equation*}
  g \cdot y := \left(\sum_{\beta \in \NN^{n}} g_{\beta} y_{\alpha + \beta}\right)_{\alpha \in \Lambda(k)} \in \RR^{\Lambda(k)}.
\end{equation*}
Finally, for $i \in \set{0,\ldots,n}$, denote $v_i := \ceil*{\frac{\deg(g_i)}{2}}$ (notice that $v_{0} = 0$) and let $d_{0}$ be the integer $\max\left(\ceil*{\frac{\deg(f)}{2}},v_{1},\ldots,v_{m}\right)$.

For $d$ any integer such that $d \geq d_{0}$, we then consider the optimization problem
\begin{equation}\label{eq:prelaxed}
  \rho_{d} = \inf_{y \in \RR^{\Lambda(2d)}}\set{ \langle f,y\rangle ;\; y_{0} = 1, \, M_{d-v_i}\left(g_i \cdot y \right) \succeq 0, \, i = 0,\ldots,m},
\end{equation}
relaxed from (\ref{eq:pmom2}).

For a given $d \geq d_{0}$, the optimization problem (\ref{eq:prelaxed}) is a semidefinite program (and can then be numerically solved using SDP solvers). Indeed, for all $y \in \RR^{\Lambda(2d)}$ such that $y_{0} = 1$ and all $i \in \set{0,\ldots,m}$, we can write
\begin{equation*}
  M_{d-v_i}\left(g_i \cdot y \right) = A_{0 \, i} + \sum_{\alpha \in \Lambda(2d)\setminus \set{0}} y_{\alpha} A_{\alpha \, i}
\end{equation*}
where, for all $\alpha \in \Lambda(2d)$, $A_{\alpha \, i}$ is a symmetric square matrix of size $\Lambda(d-v_i)$ (see also~\cite[section~5]{Sch2005}).

The following theorem of Lasserre (\cite[Theorem 6.2]{Las2015}, see also~\cite[Theorem 1.5]{Sch2005}) asserts that the sequence of optima $\left(\rho_{d}\right)_{d \geq d_{0}}$ converges to $f^{*}$:

\begin{thm}[Lasserre]\label{thm:Lasserre}
  The sequence $\left(\rho_{d}\right)_{d\geq d_{0}}$ is a nondecreasing sequence that converges to~$f^{*}$.
\end{thm}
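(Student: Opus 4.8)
The statement has two parts: monotonicity of $(\rho_d)_{d\ge d_0}$, and convergence to $f^*$. The monotonicity is essentially a soft argument about nested feasible sets, while the convergence requires Putinar's Positivstellensatz (Theorem~\ref{thm:putinar_pos}), which is precisely where the Archimedean hypothesis is used.

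For \textbf{monotonicity}, I would argue that passing from level $d$ to level $d+1$ only adds constraints, so $\rho_{d}\le\rho_{d+1}$. More precisely, given a feasible $y\in\RR^{\Lambda(2d+2)}$ for the relaxation at level $d+1$, its truncation $y'\in\RR^{\Lambda(2d)}$ (restricting to multi-indices of degree $\le 2d$) is feasible for level $d$: indeed $y'_0=y_0=1$, and each $M_{d-v_i}(g_i\cdot y')$ is a principal submatrix of $M_{d+1-v_i}(g_i\cdot y)$, hence positive semidefinite; also $\langle f,y'\rangle=\langle f,y\rangle$ since $\deg f\le 2d_0\le 2d$. Taking the infimum over level-$(d+1)$-feasible $y$ gives $\rho_d\le\rho_{d+1}$.

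For \textbf{convergence}, I would first establish $\rho_d\le f^*$ for every $d$, by feeding in the moment sequence of a near-optimal Dirac mass: if $x\in K$ then $y_\alpha:=x^\alpha$ is feasible for every $d$ (the moment matrices $M_{d-v_i}(g_i\cdot y)$ are rank-one and equal $g_i(x)\,u_d u_d^T\succeq 0$ with $u_d$ the vector of monomials), and $\langle f,y\rangle=f(x)$; hence $\rho_d\le\inf_{x\in K}f(x)=f^*$. Combined with monotonicity, $(\rho_d)$ converges to some $\rho^*\le f^*$. For the reverse inequality $\rho^*\ge f^*$, fix $\varepsilon>0$; the polynomial $p:=f-f^*+\varepsilon$ is strictly positive on $K$, so by Putinar's Positivstellensatz $p=\sigma_0+\sum_{i=1}^m\sigma_i g_i$ for sums of squares $\sigma_i$. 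Let $d$ be large enough that $\deg\sigma_0$ and $\deg(\sigma_i g_i)$ are all $\le 2d$. For any $y$ feasible at level $d$, the Riesz functional $L_y(q):=\langle q,y\rangle$ satisfies $L_y(\sigma_i g_i)\ge 0$ for each $i$ (write $\sigma_i=\sum_j h_{ij}^2$ and use $L_y(h_{ij}^2 g_i)=\bh_{ij}^T M_{d-v_i}(g_i\cdot y)\,\bh_{ij}\ge 0$, noting $\deg h_{ij}\le d-v_i$), hence $L_y(p)\ge 0$, i.e. $\langle f,y\rangle\ge f^*-\varepsilon$. Taking the infimum over such $y$ gives $\rho_d\ge f^*-\varepsilon$, so $\rho^*\ge f^*-\varepsilon$; letting $\varepsilon\to 0$ yields $\rho^*\ge f^*$.

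The \textbf{main obstacle} is purely bookkeeping: verifying that the degree bounds line up so that the sum-of-squares certificate from Putinar actually "fits" inside the truncated moment matrices at some finite level $d$ — that is, checking $\deg h_{ij}\le d-v_i$ so that $\bh_{ij}$ is indexed by $\Lambda(d-v_i)$ and the identity $L_y(h_{ij}^2 g_i)=\bh_{ij}^T M_{d-v_i}(g_i\cdot y)\bh_{ij}$ is legitimate. There is no deep difficulty here; the substantive mathematical content is entirely imported from Theorem~\ref{thm:putinar_pos}. I would also remark that this is exactly the argument of \cite[Theorem 6.2]{Las2015}, reproduced for completeness.
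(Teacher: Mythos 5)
Your proof is correct, and it is in fact \emph{more} complete than the paper's: the paper proves monotonicity and the bound $\rho_d\le f^*$ in detail, but for the final step (that the limit actually equals $f^*$) it only points the reader to~\cite[Theorem 6.2]{Las2015} and~\cite[Theorem 1.5]{Sch2005}, whereas you supply the Putinar-based argument explicitly.

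Two small comparisons with what the paper does. For the bound $\rho_d\le f^*$, the paper truncates an arbitrary element $y$ of the infinite-dimensional feasible set $F$ from~\eqref{eq:pmom2} and appeals to the earlier identification $f^*=\inf_{y\in F}\langle f,y\rangle$; you instead feed in the moment vector of a Dirac mass $\delta_{\xx}$ at an arbitrary $\xx\in K$, getting $\rho_d\le f(\xx)$ directly. Your route is more elementary and does not rely on~\eqref{eq:pmom2}; both are fine. For the last step, the paper's cited sources phrase the argument via the dual SOS program and weak SDP duality; your presentation avoids naming the dual and instead shows directly that the Riesz functional $L_y$ of any level-$d$ feasible $y$ is nonnegative on the certificate $f-f^*+\varepsilon=\sigma_0+\sum_i\sigma_i g_i$ once $d$ is large enough. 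This is the same mathematical content, stated in primal language, and is arguably cleaner for a self-contained exposition. Your degree bookkeeping is also correct: if $\deg(\sigma_i g_i)\le 2d$ then each square $h_{ij}^2$ in $\sigma_i$ satisfies $2\deg h_{ij}\le 2d-\deg g_i$, and since $v_i=\lceil\deg(g_i)/2\rceil$ and $\deg h_{ij}$ is an integer, this forces $\deg h_{ij}\le d-v_i$, so $\bh_{ij}$ is legitimately indexed by $\Lambda(d-v_i)$ and the identity $L_y(h_{ij}^2 g_i)=\bh_{ij}^T M_{d-v_i}(g_i\cdot y)\bh_{ij}$ makes sense. (Leading terms in a sum of squares cannot cancel, so $\deg\sigma_i=2\max_j\deg h_{ij}$ indeed.)
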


\begin{proof}
  Let $d$ be an integer such that $d \geq d_{0}$ and denote by $F_{d}$ the set of vectors $y \in \RR^{\Lambda(2d)}$ such that $y_{0} = 1$ and $M_{d-v_i}\left(g_i \cdot y \right) \succeq 0$ for all $i \in \set{0,\ldots,m}$. The set $\set{\langle f,y\rangle;\; y \in F_{d+1}}$ is included in the set $\set{\langle f,y\rangle;\; y \in F_{d}}$. Indeed, if $y \in F_{d+1}$ and if we denote by $\overline{y}$ the truncation $\left(y_{\alpha}\right)_{\alpha \in \Lambda(2d)}$ of $y$, we have $\overline{y}_{0} = y_{0} = 1$ and, for $i \in \set{0,\ldots,m}$, $M_{d-v_i}\left(g_i \cdot \overline{y} \right) \succeq 0$ (because $M_{d-v_i}\left(g_i \cdot \overline{y} \right)$ is a principal submatrix of the positive semidefinite matrix $M_{d+1-v_i}\left(g_i \cdot y \right)$), as well as $\langle f, \overline{y} \rangle = \langle f, y\rangle$ since $\deg f \leq 2 d$. As a consequence, $\rho_{d} \leq \rho_{d+1}$.

  We then show that the nondecreasing sequence $\left(\rho_{d}\right)_{d\geq d_{0}}$ is bounded by $f^{*}$. Consider the formulation (\ref{eq:pmom2}) of our optimization problem and denote by $F$ the set of sequences $y \in \RR^{\NN^{n}}$ such that $y_{0} = 1$ and $M\left(g_i \cdot y \right) \succeq 0$ for all $i \in \set{0,\ldots,m}$. Let $y$ be in $F$ and let $\overline{y} := \left(y_{\alpha}\right)_{\alpha \in \Lambda(2d)}$ be the truncation of $y$. Again, we have $\overline{y}_{0} = 1$, $M_{d-v_i}\left(g_i \cdot \overline{y} \right) \succeq 0$, $i \in \set{0,\ldots,m}$, and $\langle f, \overline{y} \rangle = \langle f, y\rangle$, so that $\rho_{d} \leq f^{*}$. Therefore, the sequence $\left(\rho_{d}\right)_{d\geq d_{0}}$ converges.

  The last step is to show that $f^{*}$ is actually the limit of $\left(\rho_{d}\right)_{d\geq d_{0}}$. If $\epsilon$ is a positive real number, one can show that there exists $d \geq d_{0}$ such that $f^{*}-\epsilon \leq \rho_{d} \leq f^{*}$: the interested reader is invited to refer to~\cite[Theorem 6.2]{Las2015} or~\cite[Theorem 1.5]{Sch2005}. The proof involves the dual SDP associated to (\ref{eq:prelaxed}), together with Putinar's Positivstellensatz~\ref{thm:putinar_pos}.
\end{proof}

\section{Lasserre's algorithm -- GloptiPoly}
\label{sec:algo}

The principle of Lasserre's algorithm to solve problem (\ref{eq:pmin}) is to numerically compute the sequence of optima $\left(\rho_{d}\right)_{d \geq d_{0}}$ (which by theorem~\ref{thm:Lasserre} converges to $f^{*}$) using SDP solvers at each step. In order to complete this approach, one has to define a stopping criterion for the algorithm. In~\cite[section 6.1]{Las2015}, Lasserre chooses a sufficient condition in terms of ranks of moment matrices, a condition which is motivated by the theorem below. The result we show is actually a slight generalization of~\cite[Theorem 6.6]{Las2015}, that we decided to state in order to take into account the fact that a SDP solver, when applied to the SDP (\ref{eq:prelaxed}), only provides, at best, a numerical approximation of the optimum $\rho_{d}$. Let $\epsilon$ be a nonnegative real number, $d$ be an integer such that $d \ge d_{0}$ and denote $v := \max(v_{1},\ldots,v_{m})$.

\begin{thm}\label{thm:rank_condition}
  Let $y \in F_{d}$ (we defined $F_{d}$ in the proof of theorem~\ref{thm:Lasserre}) such that $\rho_{d}\leq \langle f, y \rangle \leq \rho_{d}+\epsilon$. If $\mathrm{rank} \, M_{d-v}(y)= \mathrm{rank} \, M_{d}(y)$ then
  \begin{equation*}
    f^{*}\leq \langle f, y \rangle \leq\rho_{d} + \epsilon \leq f^{*} +\epsilon.
  \end{equation*}
  Moreover, if we denote $s :=  \mathrm{rank} \, M_{d}(y)$, there exist at least $s$ points $\xx$ of $K$ such that $f^{*} \leq f(\xx) \leq f^{*} + \epsilon$.
\end{thm}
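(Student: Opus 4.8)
The plan is to reduce the statement to the \emph{flat extension theorem} of Curto and Fialkow, in the constrained form used by Lasserre and by Henrion--Lasserre (\cite[Theorem~6.6]{Las2015}, \cite{HLL2009}): under the rank condition the truncated moment data carried by $y$ must come from a genuine, finitely atomic measure supported on $K$, and once this measure is in hand both assertions follow by averaging $f$ against it. Concretely, membership $y\in F_{d}$ gives $y_{0}=1$ together with the positive semidefiniteness of $M_{d}(y)=M_{d-v_{0}}(g_{0}\cdot y)$ and of $M_{d-v_{i}}(g_{i}\cdot y)$ for $i=1,\dots,m$. The assumption $\mathrm{rank}\,M_{d-v}(y)=\mathrm{rank}\,M_{d}(y)$, with $v=\max(v_{1},\dots,v_{m})$, is exactly the flatness condition in the truncated moment problem on $K$; applying the flat extension theorem, $y$ is the sequence of all moments up to order $2d$ of a finite Borel measure
\begin{equation*}
  \mu=\sum_{j=1}^{s}\lambda_{j}\,\delta_{\xx^{(j)}},\qquad
  \xx^{(1)},\dots,\xx^{(s)}\ \text{distinct points of }K,\quad \lambda_{j}>0,
\end{equation*}
with exactly $s=\mathrm{rank}\,M_{d}(y)$ atoms, and with $\sum_{j=1}^{s}\lambda_{j}=\mu(\RR^{n})=y_{0}=1$.

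For the first assertion, since $\deg f\le 2d_{0}\le 2d$ the quantity $\langle f,y\rangle$ is simply the integral of $f$ against $\mu$, namely
\begin{equation*}
  \langle f,y\rangle=\int_{\RR^{n}}f\,\dd\mu=\sum_{j=1}^{s}\lambda_{j}\,f(\xx^{(j)}).
\end{equation*}
Each $\xx^{(j)}$ lies in $K$, hence $f(\xx^{(j)})\ge f^{*}$, and therefore $\langle f,y\rangle\ge f^{*}$ because $\sum_{j}\lambda_{j}=1$. Combining this lower bound with the hypothesis $\langle f,y\rangle\le\rho_{d}+\epsilon$ and with the inequality $\rho_{d}\le f^{*}$ obtained in the proof of theorem~\ref{thm:Lasserre} yields the announced chain $f^{*}\le\langle f,y\rangle\le\rho_{d}+\epsilon\le f^{*}+\epsilon$.

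For the second assertion the natural candidates are the $s$ atoms $\xx^{(1)},\dots,\xx^{(s)}$ themselves: they are $s$ distinct points of $K$, so automatically $f(\xx^{(j)})\ge f^{*}$, and moreover $\sum_{j}\lambda_{j}\bigl(f(\xx^{(j)})-f^{*}\bigr)=\langle f,y\rangle-f^{*}\le\epsilon$, which in the exact case $\epsilon=0$ forces $f(\xx^{(j)})=f^{*}$ for all $j$ --- recovering Lasserre's statement that the extracted atoms are global minimizers. I expect two points to need attention. The real crux is the flat extension step itself: the rank condition must be imposed with $v=\max_{i}v_{i}$ rather than with $M_{d-1}(y)$, precisely so that the positivity of the localizing matrices $M_{d-v_{i}}(g_{i}\cdot y)$ confines the atoms of $\mu$ to $K$ and not merely to $\RR^{n}$; establishing --- or citing, as in \cite[Theorem~6.6]{Las2015} --- this constrained Curto--Fialkow extension carries the whole argument. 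The second is the passage, when $\epsilon>0$, from the averaged inequality $\sum_{j}\lambda_{j}\bigl(f(\xx^{(j)})-f^{*}\bigr)\le\epsilon$ to the per-atom estimate $f(\xx^{(j)})\le f^{*}+\epsilon$ for the $s$ extracted points; this is the step I would examine most carefully.
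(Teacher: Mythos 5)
Your approach is the same as the paper's: invoke theorem~\ref{thm:atom-meas} (the constrained Curto--Fialkow/flat-extension theorem) to obtain an $s$-atomic representing measure $\mu = \sum_{j=1}^{s} \lambda_j \delta_{\xx_j}$ supported on $K$ with $\sum_j \lambda_j = y_0 = 1$, compute $\langle f, y\rangle = \sum_j \lambda_j f(\xx_j) \geq f^*$, and chain this with $\rho_d \leq f^*$ (from theorem~\ref{thm:Lasserre}) and the hypothesis $\langle f, y\rangle \leq \rho_d + \epsilon$. For the first assertion your argument is complete and coincides with the paper's.

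For the second assertion your hesitation is well placed, and it is worth noting that the paper's own proof is not airtight at exactly the point you single out. The paper argues by contradiction: if $f(\xx_i) > f^* + \epsilon$ for some $i$, then $\sum_{j} \lambda_j f(\xx_j) > f^* + \epsilon$, contradicting the first assertion. But from $f(\xx_j) \geq f^*$ for all $j$ together with $f(\xx_i) > f^* + \epsilon$, one can conclude only $\sum_j \lambda_j f(\xx_j) > f^* + \lambda_i \epsilon$, which exceeds $f^* + \epsilon$ only when $\lambda_i \geq 1$, i.e.\ when $s = 1$. In general the averaged inequality $\sum_j \lambda_j \bigl(f(\xx_j)-f^*\bigr) \leq \epsilon$ yields only the weight-dependent bound $f(\xx_i) \leq f^* + \epsilon/\lambda_i$ for each atom, not the claimed uniform bound $f^* + \epsilon$. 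So your proposal does not close this gap for $\epsilon > 0$ --- but neither does the paper, and your instinct to flag this as the step most at risk was sound. The $\epsilon = 0$ case, which recovers Lasserre's original Theorem~6.6, is unaffected, as you observe.
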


In other words, if an optimal solution $y$, up to a fixed precision $\epsilon$, of the SDP (\ref{eq:prelaxed}) satisfies the above rank condition on its moment matrix, then $\langle f,y \rangle$ is an approximation of $f^{*}$ up to precision~$\epsilon$. Furthermore, there exist at least $\mathrm{rank} \, M_{d}(y)$ points of $K$ which are global minimizers of $f$ up to precision $\epsilon$.

\begin{rem}
  \begin{enumerate}
    \item For $\epsilon = 0$, we recover Theorem 6.6 in~\cite{Las2015}.

    \item The SDP solver used in the algorithm of Lasserre implemented in the freeware \emph{GloptiPoly~3} computes an element $y$ of $F_{d}$ which is an approximation of an optimal solution of (\ref{eq:prelaxed}) and such that the rank $r$ of $M_{d}(y)$ is maximal among the ranks of moment matrices of elements of $F_{d}$. GloptiPoly then checks if the \emph{numerical rank} of the principal submatrix $M_{d-v}(y)$ of $M_{d}(y)$ is equal to $r$. The \emph{numerical rank} of a matrix $M$ is, roughly speaking, the number of singular values of $M$ which are greater than a fixed precision, and the numerical rank of $M$ is not greater than its rank. As a consequence, if the numerical rank of $M_{d-v}(y)$ is (at least) $r$, we have the inequalities $r \leq \mathrm{rank} \, M_{d-v}(y) \leq \mathrm{rank} \, M_{d}(y) =r$ so that $\mathrm{rank} \, M_{d-v}(y) = \mathrm{rank} \, M_{d}(y)$ and the stopping criterion of theorem~\ref{thm:rank_condition} applies. More details about these questions can be found in~\cite[sections 4.4.1 and 4.4.2]{JLR2008}.

    \item In~\cite[section 6.1.2]{Las2015} is described the algorithm, implemented in GloptiPoly, which extract (approximated) global minimizers of $f$ when the rank condition is satisfied.
  \end{enumerate}
\end{rem}

Theorem~\ref{thm:rank_condition} is a consequence of the following one whose sketch of proof is postponed below. For any $r \in \NN \setminus \set{0}$, a Borel measure $\mu$ on $\RR^{n}$ is said to be {\it $r$-atomic} if there exist $\xx_{1},\ldots,\xx_r \in \RR^{n}$ and positive real numbers $\lambda_{1},\ldots,\lambda_r$ such that $\mu = \sum_{i=1}^r \lambda_i \delta_{\xx_i}$.

\begin{thm}[Curto--Fialkow~\cite{CF2000}, Laurent~\cite{Lau2005}]\label{thm:atom-meas}
  Let $y \in F_{d}$. If $\mathrm{rank} \, M_{d-v}(y)= \mathrm{rank} \, M_{d}(y)$, then $y$ can be represented by a $s$-atomic measure, where $s :=  \mathrm{rank} \, M_{d}(y)$, whose support is included in $K$.
\end{thm}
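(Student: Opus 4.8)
The plan is to follow the flat-extension strategy of Curto--Fialkow, supplemented by Laurent's observation that the localizing matrices force the atoms into $K$. Write $s := \mathrm{rank}\, M_{d}(y)$. First I would note that, since $M_{d-v}(y)$ is a principal submatrix of $M_{d-1}(y)$, itself a principal submatrix of $M_{d}(y)$ (using $v \ge 1$, which holds as soon as one of the $g_{i}$ is nonconstant --- the only case of interest), monotonicity of ranks of nested principal submatrices together with the hypothesis gives the \emph{flatness} condition $\mathrm{rank}\, M_{d-1}(y) = \mathrm{rank}\, M_{d}(y) = s$, which is what drives the whole argument.

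Next I would set up the quotient algebra. Since $M_{d}(y)\succeq 0$, the bilinear form $(p,q)\mapsto L_{y}(pq)$ on the space $\RR[X]_{\le d}$ of polynomials of degree at most $d$ --- with $L_{y}$ the Riesz-type functional $p\mapsto\sum_{\alpha}p_{\alpha}y_{\alpha}$ of the proof of theorem~\ref{prop:sol-mom-pb-archi}, restricted to degree $\le 2d$ --- is positive semidefinite; let $\mathcal{N}$ be its kernel and put $\mathcal{A}:=\RR[X]_{\le d}/\mathcal{N}$, an $s$-dimensional real inner-product space. Flatness is exactly the statement that the classes of monomials of degree $\le d-1$ already span $\mathcal{A}$. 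I would then invoke the Curto--Fialkow flat extension theorem: flatness implies that $M_{d}(y)$ extends to a moment matrix $M_{d+1}(\widetilde{y})$ of the same rank $s$ and, as flatness is inherited at the new level, iterating yields an infinite moment matrix $M_{\infty}(\widetilde{y})$ with $\mathrm{rank}\, M_{\infty}(\widetilde{y})=s$ and $\widetilde{y}_{\alpha}=y_{\alpha}$ for $\abs{\alpha}\le 2d$. Concretely, for each variable $X_{k}$ the assignment $\overline{b}\mapsto\overline{X_{k}b}$ on monomial classes of degree $\le d-1$ is, by flatness, well defined on all of $\mathcal{A}$ and produces pairwise commuting operators $M_{X_{1}},\ldots,M_{X_{n}}$ on $\mathcal{A}$, self-adjoint since $L_{y}(X_{k}pq)=L_{y}(pX_{k}q)$; one then sets $\widetilde{y}_{\alpha}:=\langle M_{X}^{\alpha}\,\overline{1},\,\overline{1}\rangle$.

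Then I would extract the atoms and locate them in $K$. The commuting self-adjoint family $M_{X_{1}},\ldots,M_{X_{n}}$ on the $s$-dimensional space $\mathcal{A}$ is simultaneously orthogonally diagonalizable; since the classes $\overline{X^{\alpha}}$ with $\abs{\alpha}\le d$ exhaust $\mathcal{A}$ and are obtained by applying the $M_{X_{k}}$ to $\overline{1}$, the vector $\overline{1}$ is cyclic, so every joint eigenspace is one-dimensional and there are \emph{exactly} $s$ distinct joint-eigenvalue tuples $\xx_{1},\ldots,\xx_{s}\in\RR^{n}$. Writing $\overline{1}$ in a joint orthonormal eigenbasis and letting $\lambda_{j}$ be the squares of its coordinates (positive, by cyclicity), one gets $\widetilde{y}_{\alpha}=\sum_{j=1}^{s}\lambda_{j}\,\xx_{j}^{\alpha}$, so $\mu:=\sum_{j=1}^{s}\lambda_{j}\delta_{\xx_{j}}$ is $s$-atomic and represents $y$ on $\Lambda(2d)$. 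To see that the support lies in $K$, fix $i\in\set{1,\ldots,m}$: every entry of $M_{d-v_{i}}(g_{i}\cdot y)$ is a $y_{\gamma}$-combination with $\abs{\gamma}\le 2(d-v_{i})+\deg g_{i}\le 2d$, hence computed by $\mu$, which gives
\[
  M_{d-v_{i}}(g_{i}\cdot y)=V_{i}\,\mathrm{diag}\!\big(\lambda_{1}g_{i}(\xx_{1}),\ldots,\lambda_{s}g_{i}(\xx_{s})\big)\,V_{i}^{T},
\]
where $V_{i}$ has $j$-th column $(\xx_{j}^{\alpha})_{\alpha\in\Lambda(d-v_{i})}$. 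The vectors $(\xx_{j}^{\alpha})_{\alpha\in\Lambda(d-v)}$ are linearly independent --- otherwise $M_{d-v}(y)=\sum_{j}\lambda_{j}(\xx_{j}^{\alpha})(\xx_{j}^{\beta})$ would have rank $<s$ --- and as $d-v_{i}\ge d-v$ they are coordinate blocks of the columns of $V_{i}$, so $V_{i}$ has full column rank $s$; choosing $z$ with $V_{i}^{T}z=e_{j}$ and using $M_{d-v_{i}}(g_{i}\cdot y)\succeq 0$ (from $y\in F_{d}$) gives $0\le z^{T}M_{d-v_{i}}(g_{i}\cdot y)z=\lambda_{j}g_{i}(\xx_{j})$, hence $g_{i}(\xx_{j})\ge 0$. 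As this holds for every $i$, each $\xx_{j}\in K$, and $\mu=\sum_{j=1}^{s}\lambda_{j}\delta_{\xx_{j}}$ is the desired $s$-atomic representing measure with support in $K$.

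The step I expect to be the main obstacle is the flat extension itself: proving that the multiplication operators $M_{X_{k}}$ are well defined and pairwise commute --- equivalently, that flatness propagates to every higher level so that $M_{\infty}(\widetilde{y})$ exists --- is precisely the content of the Curto--Fialkow theorem, and one must either cite it as a black box or devote the bulk of the write-up to it. A secondary subtlety is the claim that there are exactly $s$ atoms, which rests on the cyclicity of $\overline{1}$ forcing all joint eigenspaces to be one-dimensional; the localizing-matrix argument of the last step, by contrast, is essentially a linear-algebra computation once the representing measure is in hand.
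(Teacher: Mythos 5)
Your proof is correct and follows the same skeleton as the paper's: flatness gives, via the Curto--Fialkow flat extension theorem, an infinite rank-$s$ positive semidefinite moment matrix and hence an $s$-atomic representing measure, and positivity of the localizing matrices $M_{d-v_i}(g_i\cdot y)$ then forces the atoms into $K$. The only substantive difference is that you unfold into explicit operator-theoretic and linear-algebra arguments (commuting self-adjoint multiplication operators on the quotient algebra, joint spectral decomposition, cyclicity of $\overline{1}$ for the atom count, full column rank of the Vandermonde matrices $V_i$) the two results the paper cites as black boxes --- Laurent's Theorem 5.1(i), which passes from a finite-rank PSD moment matrix to an atomic measure supported on $V(I)$ via real algebraic geometry, and her Lemma 5.6, the degree-$(d-v)$ interpolation polynomials at the atoms --- so your version is more self-contained but amounts to the same content, since those interpolation polynomials exist precisely because your $V_i$ have full column rank.
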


\begin{proof}[Proof of theorem~\ref{thm:rank_condition}]
  We adapt the proof of~\cite[theorem 6.6]{Las2015}. Suppose that rank $M_{d-v}(y)=\text{rank }M_{d}(y)$. Then, by theorem~\ref{thm:atom-meas}, $y$ has a $s$-atomic representing measure $\mu$ with support included in $K$ : there exist $\xx_{1},\ldots,\xx_s \in K$ and $\lambda_{1},\ldots,\lambda_s \in ]0;+\infty[$ such that $\mu = \sum_{i=1}^s \lambda_i \delta_{\xx_i}$. In particular, since $y_{0} = 1$, we have $1 = y_{0} = \int_{\RR^{n}} \mu(\xx) = \sum_{i=1}^s \lambda_i$. Then
  \begin{equation*}
    \langle f, y \rangle = \sum_{\alpha \in \Lambda(2d)} f_{\alpha} y_{\alpha} = \int_{\RR^{n}} \sum_{\alpha \in \Lambda(2d)} f_{\alpha} \xx^{\alpha} {\rm d}\mu(\xx) = \sum_{i=1}^s \lambda_i f(\xx_i) \geq \sum_{i=1}^s \lambda_i f^{*} = f^{*},
  \end{equation*}
  so that $f^{*}+\epsilon \ge \rho_{d}+\epsilon \ge \langle f, y \rangle \ge f^{*}$.

  Finally suppose that there exists $i \in \set{1,\dots,s}$ such that $f(\xx_i) > f^{*} + \epsilon$. This implies that
  \begin{equation*}
    \sum_{j=1}^s \lambda_j f(\xx_j) > \left(\sum_{j =1}^s \lambda_j f^{*} \right) + \epsilon = f^{*} + \epsilon,
  \end{equation*}
  which is not true according to the above inequalities. As a consequence, for all $i \in \set{1,\dots,s}$, $f^{*} \leq f(\xx_i) \leq f^{*} + \epsilon$.
\end{proof}

\begin{proof}[Proof of theorem~\ref{thm:atom-meas}]
  We point out the essential steps of the reasoning, referring to~\cite{Lau2009} for the detailed proofs. We have
  \begin{equation*}
    \mathrm{rank} \, M_{d-v}(y) \leq \mathrm{rank} \, M_{d-v +1}(y) \leq \cdots \leq \mathrm{rank} \, M_{d-1}(y) \leq \mathrm{rank} \, M_{d}(y)
  \end{equation*}
  and suppose that $\mathrm{rank} \, M_{d-v}(y)= \mathrm{rank} \, M_{d}(y)$: we obtain that $\mathrm{rank} \, M_{d-1}(y)= \mathrm{rank} \, M_{d}(y)$. We can then recursively apply the Flat Extension Theorem 5.14 of~\cite{Lau2009} (originally due to Curto and Fialkow in~\cite{CF1996}) to assert the existence of a sequence $\widetilde{y}$ of $\RR^{\NN^{n}}$ such that, for all $\alpha \in \Lambda(2d)$, $\widetilde{y}_{\alpha} = y_{\alpha}$ and, for all $k \in \NN$ satisfying $k \geq d-v$, $\mathrm{rank} \, M_k\left(\widetilde{y}\right) = \mathrm{rank} \, M_{d}(y) = s$.

  In particular, for all $k \geq d$, since the principal submatrix $M_{d}(y) = M_{d}\left(\widetilde{y}\right)$ of $M_k\left(\widetilde{y}\right)$ is positive semidefinite (because $y$ is in $F_{d}$) and $\mathrm{rank} \, M_k\left(\widetilde{y}\right) = \mathrm{rank} \, M_{d}\left(\widetilde{y}\right)$, the symmetric matrix $M_k\left(\widetilde{y}\right)$ is also positive semidefinite (see~\cite[Definition 1.1]{Lau2009}). In other words, the (infinite) moment matrix $M\left(\widetilde{y}\right)$ is positive semidefinite. Since, furthermore, $\mathrm{rank} \, M\left(\widetilde{y}\right) = s$, by~\cite[Theorem 5.1 (i)]{Lau2009}, there is a $s$-atomic measure $\mu$ representing $\widetilde{y}$, and then $y$, with support the finite real algebraic set
  \begin{equation*}
    V(I) := \set{\xx \in \RR^{n} ;\; \mbox{for all $p \in I$}, p(\xx) = 0}
  \end{equation*}
  where $I := \set{p \in \RR[X];\; M p = 0}$ (the proof of Theorem 5.1 (i) of~\cite{Lau2009} involves real algebraic geometry).

  The last step is then to prove that this support is included in $K$. Write $V(I) = \set{\xx_{1},\ldots,\xx_s}$ and let $\lambda_{1},\ldots, \lambda_s \in ]0;+\infty[$ such that $\mu = \sum_{i=1}^s \lambda_i \delta_{\xx_i}$. By Lemma 5.6 of~\cite{Lau2009}, there exist $p_{1},\ldots,p_s \in \RR[X]$ of degree at most $d-v$ such that, for all $i,j \in \set{1,\ldots,s}$, $p_i(\xx_j) = \delta_{ij}$ (see also~\cite[Lemma 2.3]{Lau2009}). For all $i \in \set{1,\ldots,s}$ and $j \in \set{1,\ldots,m}$, we then have, because $y \in F_{d}$,
  \begin{equation*}
    0 \leq p_i^T M_{d-v}(g_j \cdot y) p_i = \int_{\RR^{n}} g_j(\xx) p_i(\xx)^{2} {\rm d}\mu(\xx) = \sum_{k=1}^s \lambda_k g_j(\xx_k) p_i(\xx_k)^{2} = \lambda_i g_j(\xx_i)
  \end{equation*}
  (see the proof of proposition~\ref{prop:sol-mom-pb-archi} above for the first equality) and, since $\lambda_i > 0$, $g_j(\xx_i) \geq 0$. As a consequence, for all $i \in \set{1,\ldots,s}$, $\xx_i \in \set{\xx \in \RR^{n};\; g_{1}(\xx) \ge 0, \dotsc ,g_{m}(\xx)\ge 0}=K$.
\end{proof}

We finally present the algorithm implemented by Lasserre and Henrion in the Matlab freeware \emph{GloptiPoly 3} to numerically solve polynomial optimization problems. For details on GloptiPoly and its use, see~\cite{HLL2009} and Appendix B of~\cite{Las2015}.

In order to solve a SDP relaxation $\rho_{d}$, $d \geq d_{0}$, GloptiPoly 3 uses by default the SDP solver SeDuMi of Sturm~\cite{Stu1999}. Other SDP solvers can also be used as long as they are interfaced through Yalmip~\cite{Lof2004} (see section 5.9 of ~\cite{HLL2009}).

The inputs of GloptiPoly are
\begin{itemize}
  \item the variables $X_{1},\ldots,X_n$,
  \item the polynomial $f \in \RR[X_{1},\ldots,X_n]$,
  \item the polynomials $g_j \in \RR[X_{1},\ldots,X_n]$, $j \in \set{1,\ldots,m}$, satisfying the Archimedean hypothesis,
  \item a maximal relaxation order $d_{\text{max}} \geq d_{0}$.
\end{itemize}

The first output is a \emph{status} number $\xi \in \set{-1,0,1}$:
\begin{itemize}
  \item $\xi = -1$ means that the consider SDP solver could not solve numerically any of the relaxations $\rho_{d}$, $d \in \set{d_{0},\ldots,d_{\text{max}}}$;
  \item $\xi = 0$ means that the solver numerically solved (that is up to a prescribed precision $\epsilon$) at least one of the relaxations $\rho_{d}$, $d \in \set{d_{0},\ldots,d_{\text{max}}}$, but at each such success either no optimal solution was provided by the solver, either the rank stopping criterion of Theorem~\ref{thm:rank_condition} was not satisfied by the (approximated up to precision $\epsilon$) obtained optimal solution $y_{d}$. In that case, the algorithm also outputs the last computed (and then greatest) optimal value $\rho_{d}$ which is (up to precision $\epsilon$) a lower bound for $f^{*}$;
  \item $\xi = 1$ means that the rank stopping criterion of Theorem~\ref{thm:rank_condition} has been satisfied by an optimal solution $y_{d}$ of a solved relaxation $\rho_{d}$, $d \in \set{d_{0},\ldots,d_{\text{max}}}$. In that case, the algorithm also outputs $\rho_{d}$ which is then an approximation of $f^{*}$ up to the prescribed precision $\epsilon$.
\end{itemize}

Lasserre's algorithm for polynomial optimization is, in pseudo code, the following (see~\cite[Algorithm 6.1]{Lau2009}):

\begin{algorithmic}
  \State $d \gets d_{0}$
  \State $\rho \gets -\infty$
  \State $\xi \gets -1$
  \While{$d \leq d_{\text{max}}$}
  ask SDP solver to solve $\rho_{d}$
  \If{not possible}
  \State $d \gets d+1$
  \Else
  \State $\xi \gets 0$
  \State $\rho \gets \text{optimal value provided by the solver}$
  \If{SDP solver found an optimal solution $y$ and $y$ satisfies rank stopping criterion}
  \State $\xi \gets +1$
  \State \textbf{Return} $\xi$,$\rho$ 
  \State \textbf{Stop}
  \Else
  \State $d \gets d+1$
  \EndIf
  \EndIf
  \EndWhile
  \State \textbf{Return} $\xi$,$\rho$
\end{algorithmic}

\begin{rem}
  \begin{enumerate}
    \item When the rank condition is satisfied, we can also ask GloptiPoly to extract minimizers up to precision $\epsilon$ (in the sense of Theorem~\ref{thm:rank_condition}), which involves the algorithm described in~\cite[section 6.1.2]{Las2015}.
    \item If the output $\xi$ is $0$ or $-1$, one can increase $d_{\text{max}}$ to try to obtain an approximation (or a better lower bound) of $f^{*}$ at a higher relaxation order.
    \item There is no complexity known for Lasserre's method. Actually, we do not know if there is a maximal relaxation degree $d_{\text{max}}$, dependent on the inputs of the problem, which would ensure the rank stopping criterion to be satisfied at some ordre $d \leq d_{\text{max}}$. However, what makes this method advantageous is that it benefits from the interesting complexity of SDP solvers to solve semidefinite programs (see for instance~\cite{Las2015} A.1.2).
  \end{enumerate}
\end{rem}

We conclude this part by the following remark: the convergence of Lasserre's polynomial optimization method, described in the previous sections, takes place when the constraint set is a semialgebraic compact set satisfying the Archimedean property~\ref{def:archimedean_prop}. Nevertheless, when the Archimedean condition is not satisfied but the polynomial function $f$ is coercive, Jeyakumar--Lasserre--Li in~\cite{JLL2014} provide a way to consider the optimization problem (\ref{eq:pmin}) as a problem with constraints satisfying the Archimedean condition :

\begin{lem}[Jeyakumar--Lasserre--Li]\label{lem:coercivity}
  Suppose that the polynomial function $f: \RR^{n} \rightarrow \RR$ associated to $f \in \RR[X_{1},\dotsc,X_n]$ is coercive, and let $c>0$ and $\yy \in K$ such that $c>f(\yy)$. Then the quadratic module $\mathbf{M}(g_{1},\dotsc,g_{m},c-f)$ associated to the semialgebraic set
  \begin{equation*}
    \widetilde{K}=\set{\xx\in \RR^{n};\; g_{1}(\xx)\ge 0,\dotsc,g_{m}(\xx)\ge 0,c-f(\xx)\ge 0}
  \end{equation*}
  is Archimedean (in particular, $\widetilde{K}$ is compact). Furthermore,
  \begin{equation*}
    f^{*} = \inf_{\xx \in K} f(\xx) = \inf_{\xx \in \widetilde{K}} f(\xx) =  \min_{\xx \in \widetilde{K}} f(\xx).
  \end{equation*}
\end{lem}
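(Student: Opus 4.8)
The plan is to reduce everything to the classical fact that a coercive polynomial attains its infimum on a closed set, and then to exhibit the required certificate $N - \sum_i X_i^2 \in \mathbf{M}(g_1,\dots,g_m,c-f)$ explicitly. First I would recall that $f$ coercive means $f(\xx)\to+\infty$ as $\norm{\xx}\to+\infty$; fixing $\yy\in K$ and $c>f(\yy)$, the sublevel set $\set{\xx\in\RR^n;\ f(\xx)\le c}$ is bounded (by coercivity) and closed (by continuity of the polynomial function), hence compact. Therefore $\widetilde K = K\cap\set{f\le c}$ is compact as an intersection of a closed set with a compact set, which already proves the parenthetical claim about compactness. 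I would then choose $R>0$ with $\widetilde K\subset\set{\xx;\ \norm{\xx}^2< R}$ and set $N := R$; it remains to show $N-\sum_i X_i^2$ lies in the quadratic module.

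The key step is the membership $N-\sum_i X_i^2\in\mathbf{M}(g_1,\dots,g_m,c-f)$. Here I would invoke Putinar's Positivstellensatz (theorem~\ref{thm:putinar_pos} above), but this is slightly circular since that theorem itself presupposes the Archimedean property for the module in question. The clean way around this is the following: the polynomial $p(\xx):=N-\norm{\xx}^2$ need not be strictly positive on $\widetilde K$, but by coercivity there is a radius $R_0$ such that $f(\xx)>c$ whenever $\norm{\xx}^2\ge R_0$; choosing $N>R_0$ we get that on $\set{\xx;\ p(\xx)\le 0}=\set{\norm{\xx}^2\ge N}$ one has $c-f(\xx)<0$, so the basic closed set $\widetilde K$ is in fact contained in the open ball $\set{\norm{\xx}^2<N}$ with room to spare, and moreover $\widetilde K\subset\set{\norm{\xx}^2< R_0}$ already. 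Then a direct algebraic identity does the job: on all of $\RR^n$ one can write
\begin{equation*}
  N-\sum_{i=1}^n X_i^2 = \bigl(N - R_0 + (c-f)\bigr) \;-\;(c-f)\;+\;\bigl(R_0-\sum_{i=1}^n X_i^2\bigr),
\end{equation*}
which is not yet a representation in the module; the honest route is instead to apply Putinar's theorem not to the module $\mathbf M(g_1,\dots,g_m,c-f)$ but to observe that $c-f$ together with a large-ball constraint generates an Archimedean module, and then show the ball constraint is redundant. Concretely: let $B := \set{\xx;\ M-\norm{\xx}^2\ge 0}$ for $M$ large; the module $\mathbf M(g_1,\dots,g_m,c-f,M-\norm{\xx}^2)$ is trivially Archimedean, so by Putinar the strictly positive (on that compact set) polynomial... — but we want to drop $M-\norm{\xx}^2$. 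This is exactly what the cited paper~\cite{JLL2014} establishes, and I would follow their argument: since $f$ is coercive there exist $M$ and a sum of squares $\sigma$ with $M-\norm{\xx}^2 = \sigma + (\text{element involving } c-f)$ valid identically, obtained by writing $M-\norm{\xx}^2$ as a polynomial that is $\ge 0$ on the sublevel set $\set{f\le c}$ and applying Putinar's representation for the coercive-sublevel situation; plugging this into the trivially-Archimedean module collapses it onto $\mathbf M(g_1,\dots,g_m,c-f)$.

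Finally I would dispatch the chain of equalities for $f^*$. The inclusion $\widetilde K\subset K$ gives $\inf_{\widetilde K} f\ge \inf_K f = f^*$. For the reverse, note $\yy\in\widetilde K$ (since $f(\yy)<c$), so $f^*\le f(\yy)$; and for any $\xx\in K$ with $f(\xx)\le f(\yy)<c$ we have $\xx\in\widetilde K$, so $\inf_{\widetilde K} f\le f(\xx)$, and taking the infimum over such $\xx$ (which does not change $f^*$, as points with $f(\xx)\ge c$ are irrelevant to the infimum) yields $\inf_{\widetilde K} f\le f^*$. Hence $\inf_K f=\inf_{\widetilde K} f$. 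That the infimum over $\widetilde K$ is attained is immediate: $\widetilde K$ is compact and $f$ continuous, so $\inf_{\widetilde K} f=\min_{\widetilde K} f$. The main obstacle, as flagged above, is the non-circular verification that $N-\sum_i X_i^2\in\mathbf M(g_1,\dots,g_m,c-f)$ — Putinar's Positivstellensatz cannot be applied naively because the target module's Archimedean status is what we are trying to prove, so one must either borrow the explicit construction from~\cite{JLL2014} or argue via an auxiliary large-ball module and prove the ball constraint redundant using coercivity; for a self-contained write-up I would cite~\cite{JLL2014} for this step and only reproduce the compactness argument and the $f^*$ equalities in full.
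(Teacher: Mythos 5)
Your handling of the compactness of $\widetilde{K}$ and of the chain of equalities for $f^{*}$ is correct and matches the paper. The gap is exactly where you flag it: you do not actually prove the Archimedean property, but instead cycle through several abandoned attempts (a direct Putinar application you correctly recognize as circular, an algebraic identity you concede is not a module representation, an auxiliary large-ball module whose ball constraint you never manage to discharge) and finally defer to ``cite~\cite{JLL2014} for this step.'' Since the Archimedean claim is the substantive content of the lemma, this is a genuine hole, not a stylistic shortcut.

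The missing idea is simpler than the routes you tried, and is precisely what the paper invokes via~\cite[Theorem 1.1]{Sch2005}. Among the equivalent characterizations of the Archimedean property in that theorem is the following: a finitely generated quadratic module $\mathbf{M}$ is Archimedean as soon as it contains \emph{some} polynomial $g$ for which $\set{\xx;\; g(\xx)\ge 0}$ is bounded. Now $c-f$ is trivially a member of $\mathbf{M}(g_{1},\dotsc,g_{m},c-f)$, and the set $E=\set{\xx;\; c-f(\xx)\ge 0}$ is bounded by coercivity of $f$ (the very argument you already wrote for $\widetilde{K}$, applied to $E$ alone rather than to $K\cap E$). That is the whole proof. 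Notice the key structural point you missed: you only ever used compactness of $\widetilde{K}=K\cap E$, but for the Archimedean conclusion one should isolate the single generator $c-f$ and use compactness of $E$ by itself. Your auxiliary-ball approach cannot close because you never have a mechanism to eliminate the extra generator $M-\norm{\xx}^{2}$ from the module; the Schmüdgen--Schweighofer characterization makes that detour unnecessary. (If you prefer to avoid the black-box citation: since there is only one generator $c-f$, the quadratic module $\mathbf{M}(c-f)$ coincides with the preordering generated by $c-f$, and Schmüdgen's Positivstellensatz for the compact set $E$ then gives directly $N-\sum_{i}X_{i}^{2}\in\mathbf{M}(c-f)\subset\mathbf{M}(g_{1},\dotsc,g_{m},c-f)$ for suitable $N$, without the circularity you were rightly worried about for Putinar.)
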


\begin{proof}
  The set $E=\set{\xx\in \RR^{n};\; c-f(\xx)\ge 0}$ is not empty since $\yy\in E$. Furthermore, the set~$E$ is compact. Indeed, if we suppose that $E$ is not bounded, we can find a sequence $(x_n)_{n\in \NN}$ of elements of $E$ such that $\norm{x_n} \to + \infty$. But then $f(x_n) \to + \infty$ since $f$ is coercive, which is impossible since, by definition of $E$, for all $n \in \NN,\  f(x_n) \le c$. Since $E=\set{\xx\in \RR^{n};\; c-f(\xx)\ge 0}$ is compact, then the quadratic module  $\mathbf{M}(g_{1},\dotsc,g_{m},c-f)$ is Archimedean by~\cite[Theorem 1.1]{Sch2005}. Finally, we have $c > f(\yy)$ so $c > f^{*}$ and
  \begin{equation*}
    f^{*} = \inf \set{f(\xx);\; \xx \in K} = \inf \set{ f(\xx) ;\; \xx \in K \cap \set{c - f \ge 0}\ }  = \inf \set{f(\xx);\; \xx \in \widetilde{K}}.
  \end{equation*}
\end{proof}

In other words, even if the polynomials $g_{1},\ldots,g_{m}$ do not satisfy themselves the Archimedean hypothesis, provided that $f$ is coercive, we can place ourselves in the range of application of Lasserre's method by adding the inequality $c - f \ge 0$ to the constraints $g_{1} \geq 0,\ldots,g_{m} \geq 0$.

\begin{rem}\label{rem:global_optimality}
  In~\cite[theorem 6.5]{Las2015}, Lasserre states some classical conditions (known as the Karush–Kuhn–Tucker (KKT) conditions~\cite[section 7.1]{Las2015}), already encountered in nonlinear programming, to ensure the finite convergence of the hierarchy of the semidefinite relaxations~\eqref{eq:prelaxed}. These conditions are initially a certificate for global optimality~\cite[theorem 7.4 and 7.5]{Las2015} and hold generically for a polynomial optimization problem~\cite[theorem 7.6]{Las2015}.
\end{rem}

\section{Distance to the transversely isotropic stratum of the symmetric second-order tensor}
\label{sec:isoT}

Let $\VV=\Sym^{2}(\RR^3)$ be the vector space of symmetric second-order tensors, endowed with the natural action $\rho_{2}(g)\ba =g\, \ba\, g^T$, for $\ba\in \Sym^{2}(\RR^3)$, $g\in G=\SO(3)$. Let $\bq=(\delta_{ij})$ be the \emph{Euclidean metric},
\begin{equation*}
  \ba'=\ba-\frac{1}{3} \tr(\ba)\, \bq ,
\end{equation*}
be the \emph{traceless part}  of $\ba$, $\lc$ be the \emph{Levi-Civita} tensor. We denote by $(\,)^{s}$ the \emph{total symmetrization} of a tensor. The generalized cross-product of symmetric tensors is defined as~\cite{OKDD2021}
\begin{equation}
  \ba \times \bb:=-(\ba\cdot \lc \cdot \bb)^s ,
  \qquad
  \left(i.e., \; (\ba\times\bb)_{ijk}:=-(a_{il}\varepsilon_{ljs} b_{sk})^{s} \right).
\end{equation}
For $\ba, \bb \in\Sym^{2}(\RR^3)$, it is a totally symmetric third-order tensor with 10 independent components.

There are three isotropy classes for the symmetric second-order tensors $(\Sym^{2}(\RR^3), \SO(3))$:
\begin{itemize}
  \item $[\DD_{2}]$ (orthotropy), if $\ba$ has three distinct eigenvalues,
  \item $[\OO(2)]$ (transverse isotropy, characterized by the polynomial equation $\ba^{2} \times \ba =0$~\cite[Lemma 8.1]{OKDD2021}), if $\ba$ has two distinct eigenvalues.
  \item $[\SO(3)]$ (isotropy, characterized by the linear equation
        $\ba'= 0$), if $\ba$ has three equal eigenvalues.
\end{itemize}

We illustrate through this first example the accuracy of Lasserre's polynomial optimization method to compute the \emph{distance to an isotropy stratum}. We shall obtain by this way the distance $\Delta(\ba_{0}, \cstrata{\OO(2)})$ of the orthotropic second-order tensor
\begin{equation*}
  \ba_{0}=
  \begin{pmatrix}
    -7 & 4  & -4 \\
    4  & 5  & -2 \\
    -4 & -2 & 5
  \end{pmatrix}
\end{equation*}
and compare the numerical results obtained with the algebraic solution derived in~\cite{ADKD2021}:
\begin{equation}\label{eq:exactDelta}
  \Delta(\ba_{0}, \cstrata{\OO(2)})^{2}=\norm{\ba_{0}-\ba^{**}}^{2}=18,
\end{equation}
where
\begin{equation}\label{eq:exactastar}
  \ba^{**}=\frac{1}{6} \left(
  \begin{array}{ccc}
      -44 & 20 & -20 \\
      20  & 31 & 5   \\
      -20 & 5  & 31  \\
    \end{array}
  \right)
  \approx
  \left(
  \begin{array}{ccc}
      -7.333333 & 3.333333  & -3.333333 \\
      3.333333  & 5.166667  & 0.8333333 \\
      -3.333333 & 0.8333333 & 5.166667  \\
    \end{array}
  \right).
\end{equation}

The numerical problem is first reduced to the following polynomial optimization problem
\begin{equation*}
  \min_{\ba \in K}\ \norm{\ba_{0}-\ba}^{2} ,
\end{equation*}
where
\begin{equation*}
  K=\set{\ba;\  \ba^{2} \times \ba= 0}.
\end{equation*}
Then, in order to properly apply the algorithm described in \autoref{sec:algo}, we need to ensure the Archimedean property (definition~\ref{def:archimedean_prop}), and for that we use lemma~\ref{lem:coercivity}. Therefore, to the 10 equations $\ba^{2} \times \ba= 0$, we add the inequality $c- \norm{\ba_{0}-\bb}^{2}\geq 0$ where we take
\begin{equation*}
  \bb=
  \begin{pmatrix}
    1 & 0 & 0 \\ 0 & 1 & 0 \\ 0 & 0 & -2
  \end{pmatrix}
  \in K,
\end{equation*}
and choose accordingly $c=300$. GloptiPoly then computes the approximation
\begin{equation}\label{eq:pbaKtilde}
  \norm{\ba_{0}-\ba^{*}}^{2}
\end{equation}
of the minimum $\Delta(\ba_{0}, \cstrata{\OO(2)})^{2} = \min_{\ba\in \widetilde{K}} \norm{\ba_{0}-\ba}^{2}$, where
\begin{equation}\label{eq:pbaKtildeset}
  \widetilde{K}=\set{\ba;\ \ba^{2} \times \ba= 0, c- \norm{\ba_{0}-\ba}^{2}\geq 0}.
\end{equation}

The optimal result computed in 1.2 seconds on a standard PC,
\begin{equation*}
  \Delta(\ba_{0}, \cstrata{\OO(2)})^{2}\approx 18.000007,
  \qquad
  \ba^{**}\approx \ba^{*}=
  \begin{pmatrix}
    -7.33299 & 3.33348 & -3.33348 \\
    3.33348  & 5.16651 & 0.83343  \\
    -3.33348 & 0.83343 & 5.16651  \\
  \end{pmatrix},
\end{equation*}
is close to the exact solution~\eqref{eq:exactDelta}--\eqref{eq:exactastar}, with the constraints accurately satisfied:
\begin{equation*}
  \frac{\max_{i} \abs{g_{i}(\ba^{*})}}{\norm{\ba_{0}}^{3}}=  \frac{\max_{p,q,r} \abs{(\ba^{*\, 2}\times \ba^{*})_{pqr}}}{\norm{\ba_{0}}^{3}}=  5.696 \, 10^{-9}.
\end{equation*}

For different values $c\in [202, 450]$, one gets
\begin{equation*}
  18.000007 \leq \Delta(\ba_{0}, \strata{\OO(2)})^{2}\leq 18.00006,
\end{equation*}
with the GloptiPoly convergence obtained for the first degree of relaxation $d=d_{0}=2$. The value chosen for $c$ affects the numerical solution. In fact, by increasing $c$ we get closer to the true minimum (=18), but the convergence is lost for $c\geq500$ (with a GloptiPoly status $\xi=0$ at the first relaxation).

\begin{rem}
  The transversely isotropic closed stratum $\cstrata{\OO(2)}$ can also be characterized by a single  scalar equation of degree 6,
  \begin{equation}\label{eq:gdea}
    g(\ba)=12\norm{\ba^{2}\times \ba}^{2}=\left(\tr (\ba^{\prime \,2})\right)^{3}-6 \left(\tr (\ba^{\prime 3})\right)^{2}=0,
  \end{equation}
  with $\ba'$, the traceless part of $\ba$. However, there is no finite convergence of the associated relaxation problem, since $\grad_{\ba} g(\ba)= \frac{6g(\ba)}{\tr (\ba^{\prime\,2})}\ba'=0$ when $g(\ba)=0$. In particular, the independence of the gradients of the constraint functions at the minimum (first order KKT sufficient condition mentioned in~\cite[theorem 6.5, theorem 7.2]{Las2015}, see remark~\ref{rem:global_optimality}) is not satisfied.
\end{rem}

The present example illustrates the strong dependence of the GloptiPoly convergence issue on the characterization of the isotropy classes. Indeed, convergence is obtained for the covariant characterization $\ba^{2} \times \ba=0$, but not for the invariant characterization~\eqref{eq:gdea}.

\section{Distance to cubic elasticity isotropy stratum}
\label{sec:ela}

In this section, we compute the distance of an experimental elasticity tensor $\bE_{0}$ to the cubic isotropy stratum $\cstrata{\octa}$, and determine the associated minimizer $\bE^{*}$. The distance to an isotropy stratum problem has been widely addressed in the Continuum Mechanics literature, by solving it in terms of an unknown rotation (either parameterized by Euler angles~\cite{FBG1996,FGB1998}, or by a unit quaternion~\cite{Del2005,KS2008,KS2009}). Here, we use the characterization of the (cubic) isotropy stratum by means of at most quadratic covariants in order to formulate such a distance problem as a quadratic polynomial optimization problem. This makes us able to apply Lasserre's method, and to show that using GloptiPoly allows to compute an accurate solution of this non trivial example.

\subsection{Formulation of the distance problem as a polynomial optimization problem}

Let
\begin{equation*}
  \VV=\Ela=\set{\bE\in \otimes^{4}\RR^{3},\, E_{ijkl}=E_{jikl}=E_{klij}}
  \qquad
  (\dim \Ela=21)
\end{equation*}
be the set of elasticity tensors $\bE: \Sym^{2}(\RR^3) \to \Sym^{2}(\RR^3)$, introduced in example~\ref{ex:Ela}, and $G=\SO(3)$.
An \emph{elasticity tensor} $\bE\in \Ela$ can be represented by a $6\times 6$ symmetric matrix, in Voigt notation,
\begin{equation}
  \label{eq:Voigt}
  [\bE]=\begin{pmatrix}
    E_{1111} & E_{1122} & E_{1133} & E_{1123} & E_{1113} & E_{1112} \\
    E_{1122} & E_{2222} & E_{2233} & E_{2223} & E_{1223} & E_{1222} \\
    E_{1133} & E_{2233} & E_{3333} & E_{2333} & E_{1333} & E_{1233} \\
    E_{1123} & E_{2223} & E_{2333} & E_{2323} & E_{2331} & E_{2312} \\
    E_{1113} & E_{1223} & E_{1333} & E_{2331} & E_{1313} & E_{3112} \\
    E_{1112} & E_{1222} & E_{1233} & E_{2312} & E_{3112} & E_{1212}
  \end{pmatrix}.
\end{equation}

The vector space $\Ela$ decomposes into a direct sum of $\SO(3)$-irreducible subspaces (so-called harmonic decomposition~\cite{Bac1970,Spe1970})
\begin{equation*}
  \Ela=\HH^0(\RR^3)\oplus \HH^0(\RR^3) \oplus \HH^{2}(\RR^3) \oplus \HH^{2}(\RR^3) \oplus \HH^{4}(\RR^3),
\end{equation*}
where $\HH^{n}(\RR^3)$ denotes the space of harmonic tensors of order $n$ ($\dim \HH^{n}(\RR^3)=2n+1$).
Letting
\begin{equation*}
  \bd=\tr_{12} \bE, \qquad \bv=\tr_{13} \bE,
\end{equation*}
the harmonic decomposition of $\bE\in \Ela$ can be expressed as (see \autoref{sec:Elast-Harm-Dec} for explicit formulas)
\begin{equation*}
  \bE=(\alpha,\beta,\bd',\bv',\bH) ,
\end{equation*}
with $\alpha=\tr \bd, \beta=\tr \bv\in \HH^0(\RR^3)$ the scalar (isotropic) components of $\bE$, with $\bd', \bv'\in\HH^{2}(\RR^3)$ its second-order harmonic components (the traceless  parts of $\bd$ and $\bv$),
and $\bH\in \HH^{4}(\RR^3)$ its fourth-order harmonic component. The squared Euclidean norm of $\bE$ is then
\begin{equation}\label{eq:normE2}
  \norm{\bE}^{2} = 5 \alpha^{2} + 4 \beta ^{2} + \frac{2}{21} \norm{\bd^{\prime} + 2 \bv^{\prime}}^{2} + \frac{4}{3}\norm{\bd^{\prime} - \bv^{\prime}}^{2}+ \norm{\bH}^{2}.
\end{equation}

We consider the triclinic experimental elasticity tensor $\bE_{0}$ representing the Nickel-based aeronautics single crystal superalloy (of CMSX-4 type), measured in~\cite{FGB1998}. In Voigt notation:
\begin{equation}\label{eq:E0}
  [\bE_{0}]=
  \begin{pmatrix}
    243 & 136 & 135 & 22 & 52 & -17 \\ 136 & 239 & 137 & -28 & 11 & 16 \\ 135 & 137 & 233 & 29 & -49 & 3 \\ 22 & -28 & 29 & 133 & -10 & -4 \\ 52 & 11 & -49 & -10 & 119 & -2 \\ -17 & 16 & 3 & -4 & -2 & 130
  \end{pmatrix}
  \;\text{ GPa}.
\end{equation}
This material has an expected symmetry, namely the cubic symmetry $[\octa]$, deduced from its cubic microstructure (see figure~\ref{fig:microstructure}).

\begin{figure}[h!]
  \includegraphics[width=6.5cm]{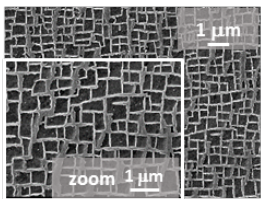}
  \caption{Cubic microstructure of CMSX-4 Ni-based single crystal superalloy~\cite{MDC2019}.}
  \label{fig:microstructure}
\end{figure}

We then aim a computing of
\begin{equation}\label{eq:dist-ela1}
  \Delta(\bE_{0},\cstrata{\octa})^{2}=\min_{\bE\in \overline{\Sigma}_{\octa}} \norm{\bE_{0}-\bE}^{2}.
\end{equation}
This optimization problem has 21 variables $E_{ijkl}$.
To set it as a polynomial optimization problem, we take advantage of the fact that the cubic elasticity stratum is an algebraic set, characterized by explicit polynomial equations.

\begin{thm}[Olive et al~\cite{OKDD2021}]\label{thm:Ecubic}
  Let $\bE=(\alpha, \beta, \bd', \bv', \bH)\in \Ela$ be an elasticity tensor,
  \begin{equation*}
    \bd_{2}=\bH\3dots \bH
    \qquad
    \left(
    \textit{i.e.}, \; (\bd_{2})_{ij}= H_{ipqr} H_{pqrj}
    \right)
  \end{equation*}
  and
  $\bd_{2}'=\bd_{2}-\frac{1}{3} \tr(\bd_{2})\, \bq$ be second-order covariants of $\bE$. Then
  $\bE\in \cstrata{\octa}$ (is at least cubic) if and only if
  \begin{equation*}
    \bv'=\bd'=0
    \quad\textrm{and} \quad
    \bd_{2}'=0,
  \end{equation*}
  and $\bE\in \strata{\octa}$ (is cubic) if and only if furthermore $\bH\neq 0$.
\end{thm}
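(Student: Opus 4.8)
\textbf{Reduction to the harmonic quartic.} Because the harmonic decomposition $\Ela\cong\HH^{0}(\RR^{3})\oplus\HH^{0}(\RR^{3})\oplus\HH^{2}(\RR^{3})\oplus\HH^{2}(\RR^{3})\oplus\HH^{4}(\RR^{3})$ is $\SO(3)$-equivariant, a tensor $\bE=(\alpha,\beta,\bd',\bv',\bH)$ lies in $\cstrata{\octa}$ if and only if some rotate of $\bE$ is fixed by the cubic group $\octa$, i.e. lies in $\Fix(\octa)\cap\Ela$. Splitting this fixed space along the harmonic summands and using that $\Fix(\octa)\cap\HH^{2}(\RR^{3})=\{0\}$ and that $\Fix(\octa)\cap\HH^{4}(\RR^{3})=\RR\,\bH_{\octa}$ is the line generated by the cubic harmonic $\bH_{\octa}$ (both from the branching of the $\SO(3)$-irreducibles $\HH^{2},\HH^{4}$ to $\octa$, or from a Molien computation), one gets that $\bE\in\cstrata{\octa}$ is equivalent to $\bd'=\bv'=0$ together with $\bH\in\Orb(\RR\,\bH_{\octa}):=\{\lambda\,\rho_{4}(g)\bH_{\octa}:\lambda\in\RR,\ g\in\SO(3)\}$, a set that is closed since $\norm{\rho_{4}(g)\bH_{\octa}}$ is independent of $g$. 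Since moreover the only isotropy class above $[\octa]$ is $[\SO(3)]$ and $\Fix(\SO(3))\cap\HH^{4}(\RR^{3})=\{0\}$, a tensor of $\cstrata{\octa}$ is isotropic exactly when $\bH=0$, which yields the last sentence. It thus remains to prove the identity $\{\bH\in\HH^{4}(\RR^{3}):\bd_{2}'(\bH)=0\}=\Orb(\RR\,\bH_{\octa})$.

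\textbf{Necessity.} The map $\bH\mapsto\bd_{2}(\bH)$ is $\SO(3)$-equivariant from $\HH^{4}(\RR^{3})$ to $\Sym^{2}(\RR^{3})$ and homogeneous of degree $2$. As $\bH_{\octa}\in\Fix(\octa)$, its image $\bd_{2}(\bH_{\octa})$ lies in $\Fix(\octa)\cap\Sym^{2}(\RR^{3})=\RR\,\bq$, so $\bd_{2}'(\bH_{\octa})=0$; by equivariance and homogeneity, $\bd_{2}'(\lambda\,\rho_{4}(g)\bH_{\octa})=\lambda^{2}\rho_{2}(g)\bd_{2}'(\bH_{\octa})=0$. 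Hence $\Orb(\RR\,\bH_{\octa})\subseteq\{\bd_{2}'=0\}$.

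\textbf{Sufficiency --- the main step.} Conversely let $\bH\in\HH^{4}(\RR^{3})\setminus\{0\}$ with $\bd_{2}'(\bH)=0$; one must show $\bH$ is cubic. The plan is to run through the isotropy lattice of $(\HH^{4}(\RR^{3}),\SO(3))$ --- there are exactly the eight classes $[\triv],[\ZZ_{2}],[\DD_{2}],[\DD_{3}],[\DD_{4}],[\OO(2)],[\octa],[\SO(3)]$ --- and to check, for every class $[H]$ with $[\octa]\not\preceq[H]$, that no tensor of $\Fix(H)\cap\HH^{4}(\RR^{3})$ of symmetry exactly $[H]$ satisfies $\bd_{2}'=0$; equivalently that $\{\bd_{2}'=0\}\cap\Fix(H)\cap\HH^{4}(\RR^{3})$ consists only of tensors of symmetry strictly larger than $[H]$ (which are then necessarily cubic or isotropic). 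The low-parameter cases are short: for $[\OO(2)]$, $\Fix(\OO(2))\cap\HH^{4}(\RR^{3})$ is the zonal line $\RR\,Q$ and $\bd_{2}'(\delta Q)=\delta^{2}\bd_{2}'(Q)$ with $\bd_{2}'(Q)\neq0$; for $[\DD_{4}]$, $\Fix(\DD_{4})\cap\HH^{4}(\RR^{3})=\langle P_{1},P_{2}\rangle$ with $P_{1}$ zonal, $P_{2}=\mathrm{Re}((x+iy)^{4})$ and $\RR\,\bH_{\octa}=\RR(P_{1}+5P_{2})$, and $\bd_{2}'(\alpha P_{1}+\delta P_{2})=0$ collapses to $\delta^{2}=25\alpha^{2}$, whose two solutions $\delta=\pm5\alpha$ are both cubic (the second is the first rotated by $45^{\circ}$ about the $\DD_{4}$ axis); $[\DD_{3}]$ is analogous, and for $[\DD_{2}]$ (three parameters, $\bd_{2}$ automatically diagonal) the two quadrics $\bd_{2}'=0$ cut out a union of four lines, each a rotate of $\RR\,\bH_{\octa}$. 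The genuine obstacle, I expect, is $[\ZZ_{2}]$ (five parameters) and especially the generic class $[\triv]$ (nine parameters): one must show the real variety defined by the five quadratics $\bd_{2}'=0$ meets neither the $[\ZZ_{2}]$- nor the $[\triv]$-stratum. Viable approaches are to first diagonalise $\bd_{2}$ by a rotation and analyse $\{\bd_{2}'=0\}$ on the resulting slice, or to certify by computer algebra (real radical / primary decomposition) that $\{\bd_{2}'=0\}$ is exactly the $4$-dimensional cone $\Orb(\RR\,\bH_{\octa})$. A cleaner organising remark for this step is that $\bd_{2}'(\bH)=0$ says precisely that $\uu\mapsto\bH\cdot\uu$ is a conformal linear map $\RR^{3}\to\HH^{3}(\RR^{3})$, from which one would deduce cubic symmetry.
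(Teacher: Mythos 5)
Your reduction to the harmonic quartic and the necessity direction are correct, but the sufficiency direction --- which is where all the content of the theorem lives --- is left genuinely open. After setting up a casework over the isotropy lattice of $(\HH^{4}(\RR^{3}),\SO(3))$, you explicitly do not handle the two classes that matter, $[\ZZ_{2}]$ and $[\triv]$ (together they fill an open dense subset of $\HH^{4}(\RR^{3})$), and instead wave at a computer-algebra certification or at a ``conformal map'' reformulation whose implication ``conformal $\Rightarrow$ cubic'' is asserted but nowhere argued. As it stands, the proof establishes only $\Orb(\RR\,\bH_{\octa})\subseteq\{\bd_{2}'=0\}$; the reverse inclusion, which is the hard half, is missing. (Some of the low-parameter cases are also stated with more confidence than they earn --- e.g.\ the precise coefficient in $\RR\bH_{\octa}=\RR(P_{1}+5P_{2})$ and the ``four lines'' claim for $[\DD_{2}]$ are not checked --- but these are minor next to the $[\ZZ_{2}]$/$[\triv]$ gap.)

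Note also that the paper does not reprove Theorem~\ref{thm:Ecubic}; it cites it from~\cite{OKDD2021}. The paper's intended methodology for statements of this type is instead visible in its proof of the piezoelectric analogue, Theorem~\ref{thm:Pcubic} (Appendix~\ref{sec:proof-d2-prime}): one exhibits an integrity basis for the invariant algebra on the relevant harmonic space in which every generator of degree $>2$ is built from $\bd_{2}'$, so that $\bd_{2}'=0$ forces all invariants except $\norm{\cdot}^{2}$ to vanish; one then checks that a fixed cubic normal form $\bH_{\octa}$ (suitably scaled to match $\norm{\bH}$) has the identical invariants; and one concludes by the separation property of integrity bases that $\bH$ and the scaled $\bH_{\octa}$ lie in the same orbit. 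This route replaces your stratum-by-stratum casework with a single global separation argument, and in particular eliminates precisely the $[\ZZ_{2}]$ and $[\triv]$ cases you could not close. If you want to complete your own route instead, you would need to actually carry out the elimination on those strata (or the real-radical computation you allude to), not just gesture at it.
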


We have then the following result.

\begin{thm}\label{thm:equiPBEla}
  Let $\bE=(\alpha, \beta, \bd', \bv', \bH)$ and $\bE_{0}=(\alpha_{0}, \beta_{0}, \bd'_{0}, \bv'_{0}, \bH_{0})$ be two elasticity tensors. The 21-dimensional minimization problem~\eqref{eq:dist-ela1} is equivalent to the 9-dimensional polynomial optimization problem
  \begin{equation*}
    \min_{\bd_{2}' =0} \norm{\bH_{0}-\bH}^{2},
  \end{equation*}
  with $\bE=(\alpha_{0}, \beta_{0}, 0, 0, \bH)$, and
  \begin{equation}\label{eq:normE0moinsE2}
    \min_{\bE\in \overline{\Sigma}_{\octa}}\norm{\bE_{0}-\bE}^{2} = \frac{2}{21} \norm{\bd_{0}^{\prime}+2 \bv_{0}^{\prime}}^{2} +\frac{4}{3}\norm{\bd_{0}^{\prime}- \bv_{0}^{\prime}}^{2}+\min_{\bd_{2}'=0} \norm{\bH_{0}-\bH}^{2}.
  \end{equation}
\end{thm}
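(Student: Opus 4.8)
The plan is to reduce the $21$-variable problem to the $\bH$-block by combining the norm identity~\eqref{eq:normE2} with the covariant description of $\cstrata{\octa}$ provided by theorem~\ref{thm:Ecubic}. The starting point is that the harmonic decomposition $\bE\mapsto(\alpha,\beta,\bd',\bv',\bH)$ is a linear bijection of $\Ela$ onto $\HH^0(\RR^3)\oplus\HH^0(\RR^3)\oplus\HH^{2}(\RR^3)\oplus\HH^{2}(\RR^3)\oplus\HH^{4}(\RR^3)$, so the harmonic components of $\bE_{0}-\bE$ are just the componentwise differences. Substituting $\bE_{0}-\bE$ into~\eqref{eq:normE2} gives
\begin{align*}
  \norm{\bE_{0}-\bE}^{2} &= 5(\alpha_{0}-\alpha)^{2} + 4(\beta_{0}-\beta)^{2} + \frac{2}{21}\norm{(\bd'_{0}-\bd') + 2(\bv'_{0}-\bv')}^{2} \\
  &\qquad + \frac{4}{3}\norm{(\bd'_{0}-\bd') - (\bv'_{0}-\bv')}^{2} + \norm{\bH_{0}-\bH}^{2}.
\end{align*}
The key structural observation is that, although this quadratic expression couples $\bd'$ and $\bv'$, it contains no term coupling the four blocks $\{\alpha\}$, $\{\beta\}$, $\{\bd',\bv'\}$ and $\{\bH\}$; equivalently, the inner product induced on $\Ela$ by $\norm{\cdot}$ makes these four summands mutually orthogonal.

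Next I would rewrite the constraint $\bE\in\cstrata{\octa}$: by theorem~\ref{thm:Ecubic} it is equivalent to $\bd'=0$, $\bv'=0$ and $\bd_{2}'=0$, where $\bd_{2}$ is the second-order covariant $\bH\3dots\bH$ of $\bH$. This constraint is itself compatible with the block decomposition above: it imposes nothing on $\alpha$ or $\beta$, it forces the whole $\{\bd',\bv'\}$ block to vanish, and it restricts $\bH$ alone through $\bd_{2}'=0$. Consequently the minimization of $\norm{\bE_{0}-\bE}^{2}$ over $\bE\in\cstrata{\octa}$ separates into four independent subproblems, one per block.

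Carrying out the four minimizations is then immediate: $\min_{\alpha}5(\alpha_{0}-\alpha)^{2}=0$ at $\alpha=\alpha_{0}$, and likewise $\min_{\beta}4(\beta_{0}-\beta)^{2}=0$ at $\beta=\beta_{0}$; on the forced locus $\bd'=\bv'=0$ the two middle terms equal the \emph{constants} $\frac{2}{21}\norm{\bd'_{0}+2\bv'_{0}}^{2}$ and $\frac{4}{3}\norm{\bd'_{0}-\bv'_{0}}^{2}$; and the last term contributes $\min_{\bd_{2}'=0}\norm{\bH_{0}-\bH}^{2}$, a minimization over the $9$-dimensional space $\HH^{4}(\RR^3)$ (which is attained, since $\bH=0$ is feasible, the feasible set $\{\bd_{2}'=0\}$ is closed, and $\bH\mapsto\norm{\bH_{0}-\bH}^{2}$ is coercive). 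Summing the four contributions yields exactly~\eqref{eq:normE0moinsE2}, and reading off the optimal block values shows that the minimizers of~\eqref{eq:dist-ela1} are precisely the tensors $\bE=(\alpha_{0},\beta_{0},0,0,\bH)$ with $\bH$ a minimizer of the reduced $9$-dimensional problem, which is the asserted equivalence. I do not expect a real obstacle: the only point deserving care is the orthogonal block-splitting of~\eqref{eq:normE2} — namely that, despite the $\bd'$–$\bv'$ coupling, the scalar part, the $(\bd',\bv')$ part and the $\bH$ part are mutually orthogonal — together with the observation that the cubic constraint of theorem~\ref{thm:Ecubic} respects the very same splitting; everything after that is bookkeeping.
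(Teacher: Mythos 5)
Your proof is correct and follows essentially the same route as the paper's: expand $\norm{\bE_{0}-\bE}^{2}$ via the harmonic-norm formula~\eqref{eq:normE2}, invoke theorem~\ref{thm:Ecubic} to translate $\bE\in\cstrata{\octa}$ into $\bd'=\bv'=0$ and $\bd_{2}'=0$, and observe that the free scalar blocks are optimized by $\alpha=\alpha_{0}$, $\beta=\beta_{0}$ while the $(\bd',\bv')$ block is pinned to zero, leaving only the $\bH$-subproblem. You merely spell out the block-orthogonality and separability argument that the paper leaves implicit, which is a useful clarification but not a different proof.
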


\begin{proof}
  The squared distance function in~\eqref{eq:dist-ela1} is
  \begin{equation*}
    \norm{\bE_{0}-\bE}^{2}=\norm{(\alpha_{0}-\alpha,\beta_{0}-\beta,\bd'_{0}-\bd',\bv'_{0}-\bv',\bH_{0}-\bH)}^{2}.
  \end{equation*}
  It can be expressed as
  \begin{equation*}
    \norm{\bE_{0}-\bE}^{2} =5 (\alpha_{0}-\alpha)^{2}+4 (\beta_{0}-\beta)^{2}+  \frac{2}{21} \norm{\bd_{0}^{\prime}+2 \bv_{0}^{\prime}-(\bd^{\prime}+2 \bv^{\prime})}^{2} +\frac{4}{3}\norm{\bd_{0}^{\prime}- \bv_{0}^{\prime}-(\bd^{\prime}- \bv^{\prime})}^{2}+ \norm{\bH_{0}-\bH}^{2},
  \end{equation*}
  by using~\eqref{eq:normE2}. By theorem~\ref{thm:Ecubic},  taking $\alpha=\alpha_{0}$, $\beta=\beta_{0}$, $\bd'=0$ and $\bv'=0$,  we get~\eqref{eq:normE0moinsE2}, and the polynomial optimization problem~\eqref{eq:dist-ela1} is reduced to the following problem in only 9 variables (the components of $\bH\in \HH^{4}(\RR^3)$, $\dim  \HH^{4}(\RR^3)=9$) instead of 21,
  \begin{equation*}
    \min_{\bd_{2}'=0} \norm{\bH_{0}-\bH}^{2}.
  \end{equation*}
\end{proof}

\begin{rem}
  An elasticity tensor $\bE\in \Ela$ corresponds to a quadratic elastic energy density, which must be positive semidefinite. This condition can be characterized using $\SO(3)$-invariant polynomial inequalities on $\bE$, and thus added to the set of polynomial constraints, if necessary, using the following fact. Given a symmetric real $n\times n$ matrix $A$, we get
  \begin{equation*}
    \text{$A$ is positive semidefinite} \iff  \forall i \in \set{1,\ldots,n}, \;\sigma_i \ge 0 ,
  \end{equation*}
  where $\sigma_{1},\ldots,\sigma_n$ denote the elementary symmetric polynomials in the eigenvalues $\lambda_i$ of $A$. Indeed, if $\lambda_i \ge 0$ for all $i \in \set{1,\ldots,n}$, then $\sigma_i \ge 0$ for all $i \in \set{1,\ldots,n}$. Conversely,
  assume that  $\sigma_i \ge 0$ for all $i \in \set{1,\ldots,n}$.
  Then, the polynomial
  \begin{equation*}
    p :=(X+\lambda_{1})(X+\lambda_{2})\ldots(X+\lambda_n) =X^{n}+\sigma_{1}X^{n-1}+\ldots+\sigma_{n-1} X +\sigma_n,
  \end{equation*}
  satisfies
  \begin{equation*}
    p(x)\geq x^{n} > 0, \qquad \forall x>0.
  \end{equation*}
  Hence, the (real) roots of $p$, namely $-\lambda_{1},\ldots,-\lambda_n$, belong to $]-\infty, 0]$, and thus $\lambda_i \ge 0$ for all $i \in \set{1,\ldots,n}$.
\end{rem}

In practice, an experimental tensor $\bE_{0}$ is most often measured as semidefinite and  the tensor the closest to $\bE_{0}$ computed as semidefinite, so, here we do not add the semidefiniteness constraint $\bE \geq 0$ to our optimization problem.

\subsection{Resolution by Lasserre's method}

A fourth order harmonic tensor $\bH\in \HH^{4}(\RR^3)$ is represented by the following real matrix (in Voigt notation)
\begin{equation}\label{eq:HVoigt}
  [\bH]=\begin{pmatrix}
    \Lambda_{2}+\Lambda_{3} & -\Lambda_{3} & -\Lambda_{2} & -X_{1} & Y_{1}+Y_{2} & -Z_{2} \\ -\Lambda_{3} & \Lambda_{3}+\Lambda_{1} & -\Lambda_{1} & -X_{2} & - Y_{1} & Z_{1}+Z_{2} \\ -\Lambda_{2} & -\Lambda_{1} & \Lambda_{1}+\Lambda_{2} & X_{1}+X_{2} & -Y_{2} & -Z_{1} \\ -X_{1} & -X_{2} & X_{1}+X_{2} & -\Lambda_{1} & -Z_{1} & -Y_{1} \\ Y_{1}+Y_{2} & - Y_{1} & -Y_{2} & -Z_{1} & -\Lambda_{2} & -X_{1} \\ -Z_{2} & Z_{1}+Z_{2} & -Z_{1} & -Y_{1} & -X_{1} & -\Lambda_{3}
  \end{pmatrix}.
\end{equation}
In practice we set
\begin{equation*}
  \xx = (\Lambda_{1}, \Lambda_{2}, \Lambda_{3}, X_{1}, X_{2}, Y_{1}, Y_{2}, Z_{1}, Z_{2}),
\end{equation*}
and GloptiPoly computes the approximation
\begin{equation}\label{eq:minH0moinsH}
  \norm{\bH_{0}-\bH^{*}}^{2}
\end{equation}
of the minimum $\Delta(\bH_{0}, \cstrata{\octa})^{2}=\min_{\xx \in \widetilde{K}} f(\xx)$, where $f(\xx)=\norm{\bH_{0}-\bH}^{2}$ and
\begin{equation*}
  \widetilde{K} = \set{\xx;\  \bd_{2}'=0, \,c-f(\xx) \ge 0},
\end{equation*}
with $c=58000>f(0)$ to ensure the Archimedean property on the set of constraints. The five quadratic scalar equations $(\bd_{2}')_{ij} = 0$ are detailed in \autoref{sec:d2-prime}. For $\bE_{0}$ given by~\eqref{eq:E0}, we have
\begin{align*}
  f(\xx) & =  540\Lambda_{2} + 620\Lambda_{3} - 88X_{1} + 668\Lambda_{1} - 264Z_{1} - 264Z_{2} - 456X_{2} + 8Z_{2}^{2}
  +8\Lambda_{1}^{2}
  \\
         & \quad +8\Lambda_{2}^{2}+8\Lambda_{3}^{2}+8Y_{2}^{2}+16X_{1}^{2}+8X_{2}^{2}+16Y_{1}^{2}+16Z_{1}^{2}-392Y_{1}-808Y_{2}
  \\
         & \quad  + 8Z_{1}Z_{2} + 2\Lambda_{1}\Lambda_{2} + 2\Lambda_{2}\Lambda_{3} + 8X_{1}X_{2} + 8Y_{1}Y_{2} + 2\Lambda_{3}\Lambda_{1} + \frac{2026042}{35}.
\end{align*}
We obtain the result at the first relaxation order $d=d_{0}=1$ with GloptiPoly status $\xi = +1$ and value
\begin{equation*}
  \min_{\xx \in \widetilde{K}} f(\xx) \approx f(\xx^{*})=2530.474727 \; \text{GPa}^{2},
\end{equation*}
The computation time is of 0.9 seconds. The computed minimizer is
\begin{multline*}
  \xx^{*} = (-36.401489,-20.227012,-38.908985,-6.396664,27.780748,\\-2.277546,44.251364,-4.557344,21.161507).
\end{multline*}
By~\eqref{eq:HVoigt}, it corresponds to the fourth-order harmonic tensor $\bH^{*}$ solution of~\eqref{eq:minH0moinsH}.

We get, by theorem~\ref{thm:equiPBEla}, $\bE^{*}=(\alpha_{0}, \beta_{0}, 0, 0, \bH^{*})$, \emph{i.e.},
\begin{equation*}
  \bE^{*}=\frac{1}{15}\left(\alpha_{0}+2 \beta_{0}\right)  \bq \odot 
  \bq+\frac{1}{6}\left(\alpha_{0}-\beta_{0}\right) \bq\underset{(2,2)}{\otimes} \bq +
  \bH^{*} ,
\end{equation*}
with $\odot$ the symmetric tensor product (see \autoref{sec:Elast-Harm-Dec}). The elasticity tensor $\bE^{*}$ is cubic (and not isotropic) since $\bH^{*}\neq 0$.

Finally, the computed cubic tensor $\bE^{*}\in \Sigma_{[\octa]}$ the closest to $\bE_{0}$ is, in Voigt notation,
\begin{equation*}
  [\bE^{*}]=
  \begin{pmatrix}
    240.130669 & 144.442318 & 125.760345 & 6.39666    & 41.97381   & -21.161507 \\
    144.442318 & 223.956191 & 141.934823 & -27.780748 & 2.277546   & 16.604162  \\
    125.760345 & 141.934823 & 242.638164 & 21.384084  & -44.251364 & 4.557344   \\
    6.39666    & -27.780748 & 21.384084  & 133.268156 & 4.557344   & 2.277546   \\
    41.973817  & 2.277546   & -44.251364 & 4.557344   & 117.093678 & 6.39666    \\
    -21.161507 & 16.604162  & 4.557344   & 2.277546   & 6.39666    & 135.775651
  \end{pmatrix}
  \text{ GPa}.
\end{equation*}
It corresponds to $\Delta(\bE_{0}, \cstrata{\octa})\approx \norm{\bE_{0}-\bE^{*}}=74.131148$ GPa and to the relative distance to cubic symmetry
\begin{equation*}
  \frac{\norm{\bE_{0}-\bE^{*}}}{\norm{\bE_{0}}}=0.103910,
\end{equation*}
slightly better than the solution computed in~\cite{FGB1998} using a parameterization by Euler angles together with a simplex minimization method. Note that the constraint $\bd_{2}'=0$ is satisfied accurately, since
\begin{equation*}
  \frac{\bd_{2}'}{\norm{\bH_{0}}^{2}} = 10^{-6}
  \begin{pmatrix}
    -4.097         & -2.8\ 10^{-6} & -4.9 \ 10^{-6} \\
    -2.8\ 10^{-6}  & -4.455        & 4.2 \ 10^{-6}  \\
    -4.9 \ 10^{-6} & 4.2 \ 10^{-6} & -8.552
  \end{pmatrix}
  \approx \mathbf{0}.
\end{equation*}

One can choose other values for $c$ satisfying $c>f(\yy)$ for some $\yy\in K$. The GloptiPoly solution varies slightly as $c$ runs the interval $[58000, 61000]$, with a computation time of 0.9 seconds for $c=58000$,  of 0.8 seconds for $c=60000$, and of 0.1 seconds for $c=61000$. Outside from this narrow interval, the GloptiPoly convergence is lost (Gloptipoly status $\xi = 0$).

\begin{rem}
  The computation time is lower for this quadratic optimization problem (with 9 variables) than for the degree 3 polynomial optimization problem of \autoref{sec:isoT} (with 6 variables).
\end{rem}

\section{Distance to cubic piezoelectricity isotropy stratum}
\label{sec:piezo}

In this final section, we apply Lasserre's polynomial optimization method to compute the distance $\Delta(\be_{0},\cstrata{\octa^{-}})$ of a raw piezoelectricity third-order tensor\footnote{relating induced polarization in a dielectric material to the strain tensor.} $\be_{0}$ to the cubic piezoelectricity stratum $\cstrata{\octa^{-}}$. This problem seems to have  never been addressed before. It is important for the design of dielectric materials, since for instance the piezolectricity behavior strongly depends on the crystal primitive cell symmetry.

\subsection{Formulation of the distance problem as a polynomial optimization problem}

According to the three-dimensional piezoelectricity framework~\cite{EM1990,GW2002}, we denote by
\begin{equation*}
  \VV=\Piez=\set{\be\in \otimes^{3}\RR^{3},\, \re_{ijk}=\re_{ikj}}
  \qquad
  (\dim \Piez=18),
\end{equation*}
the vector space of piezoelectricity tensors $\be:\Sym^{2}(\RR^3) \to \RR^3$ (see example~\ref{ex:Piez}), and set $G=\OO(3)$.
A \emph{piezoelectricity tensor} $\be\in \Piez$ can be represented by a $3 \times 6$ matrix, in so-called Voigt representation,
\begin{equation*}
  [\be]=
  \begin{pmatrix}
    \re_{111} & \re_{122} & \re_{133} & \re_{123} & \re_{113} & \re_{112} \\
    \re_{211} & \re_{222} & \re_{233} & \re_{223} & \re_{213} & \re_{212} \\
    \re_{311} & \re_{322} & \re_{333} & \re_{323} & \re_{313} & \re_{312}
  \end{pmatrix}.
\end{equation*}
The vector space $\Piez$ decomposes into a direct sum of $\OO(3)$-irreducible subspaces (so-called harmonic decomposition~\cite{Spe1970})
\begin{equation*}
  \Piez=\HH^{1}(\RR^3)\oplus \HH^{1}(\RR^3) \oplus \HH^{2 \sharp}(\RR^3) \oplus \HH^{3}(\RR^3).
\end{equation*}
The notation $\HH^{n}(\RR^3)$ still refers to the vector space of $n$-th order harmonic tensors endowed with the standard $\OO(3)$-representation $\rho_{n}$, while $\HH^{n \sharp}(\RR^3)$ refers to the same vector space endowed with the \emph{twisted} $\OO(3)$-representation $\hat \rho_{n}$, such that $\hat \rho_{n}(g)=(\det g) \, \rho_{n}(g)$.
One has
\begin{equation*}
  \be=(\vv, \ww, \ba, \bh)
\end{equation*}
with $\vv, \ww\in \HH^{1}(\RR^{3})$, $\ba\in  \HH^{2 \sharp}(\RR^{3})$ and $\bh \in \HH^{3}(\RR^{3})$.

Let $\odot$ be the symmetric tensor product and $\be^{s}\in \Sym^{3}(\RR^{3})$ denote the totally symmetric part of $\be$ (of components $(\be^{s})_{ijk}=\frac{1}{3}(\re_{ijk}+\re_{jik}+\re_{kji})$). Any piezoelectricity tensor $\be\in \Piez$ can be decomposed as the sum
\begin{equation*}
  \be=\bg+\bh
\end{equation*}
where
\begin{equation}\label{eq:Hpiezoformule}
  \bh:=\be^{s}-\frac{3}{5} \bq \odot \tr(\be^{s})
  \in \HH^{3}(\RR^3) ,
\end{equation}
is the \emph{leading harmonic part} of $\be$, and
\begin{equation*}
  \bg:=\be- \bh=(\vv, \ww, \ba),
\end{equation*}
is orthogonal to $\bh$ (\emph{i.e.}, $\langle \bg, \bh \rangle=g_{ijk} h_{ijk}=0$).

\begin{rem}
  The third-order tensors $\bg=\bg(\be)$ and $\bh=\bh(\be)$ are linear covariants of $\be$.
\end{rem}

The squared Euclidean norm of $\be$ is then
\begin{equation}\label{eq:normepiez2}
  \norm{\be}^{2}= \re_{ijk} \re_{ijk} = \norm{\bg}^{2}+\norm{\bh}^{2}.
\end{equation}

We will first consider the following raw (triclinic) piezoelectricity tensor $\be_{0}$ for pure wurtzite AlN (aluminum nitride, $x=0$), of Voigt representation,
\begin{equation}\label{eq:PPiez}
  [\be_{0}]=\left(
  \begin{array}{cccccc}
      0       & 0       & -0.0505 & -0.0394 & -0.2854 & -0.0637 \\
      -0.0637 & -0.0042 & 0.0332  & -0.2818 & -0.0058 & 0.0185  \\
      -0.5807 & -0.5822 & 1.4607  & 0.0022  & 0.0002  & 0.0043  \\
    \end{array}
  \right)
  \;\text{C/m}^{2} ,
\end{equation}
in Coulomb per square meter, computed by Density Functional Theory (DFT), using ab-initio simulations, by Manna and coworkers~\cite[Fig.3]{MTG2018}. We will also consider wurtzite alloys $\text{Cr}_{x}\text{Al}_{1-x}\text{N}$ and the associated raw  piezoelectricity tensors $\be_{0}^{x}$ (given in the \autoref{sec:Piez-Mannna} for chromium concentrations $0\leq x \leq 0.25$). Note that pure rocksalt $\text{CrN}$ corresponds to a Cr-concentration $x=1$, and that the value $x = 0.25$ is the so-called wurzite to rocksalt phase transition point~\cite{MMRFS2008}.

We aim at computing by polynomial optimization
\begin{equation}\label{eq:dist-piez1}
  \Delta(\be_{0},\cstrata{\octa^{-}})^{2}=\min_{\be\in \overline{\Sigma}_{\octa^{-}}} \norm{\be_{0}-\be}^{2},
\end{equation}
and $\be^{*}\in\strata{\octa^{-}}$ the closest to $\be_{0}$. In order to succeed, we first have to characterize the cubic piezoelectricity stratum $\cstrata{\octa^{-}}$ by polynomial equations (a proof of the following theorem is provided in \autoref{sec:proof-d2-prime}).

\begin{thm}\label{thm:Pcubic}
  Let $\be=\bg+ \bh\in \Piez$ be a piezoelectricity tensor, with $\bh\in \HH^{3}(\RR^{3})$ its leading harmonic part, let
  \begin{equation*}
    \bd_{2}=\bh : \bh
    \qquad \left(\emph{i.e.}, \, (\bd_{2})_{ij}=h_{ikl}h_{klj} \right),
  \end{equation*}
  and $\bd_{2}'=\bd_{2}-\frac{1}{3} \tr(\bd_{2})\, \bq$ be second-order covariants of $\be$. Then $\be\in \cstrata{\octa^{-}}$ (is at least cubic) if and only if
  \begin{equation*}
    \bg=0 \quad \text{and} \quad \bd_{2}'=0,
  \end{equation*}
  and $\be\in \strata{\octa^{-}}$ (is cubic) if and only if furthermore $\bh\neq 0$.
\end{thm}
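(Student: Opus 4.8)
The plan is to mirror the proof of the cubic elasticity case, theorem~\ref{thm:Ecubic}: use the $\OO(3)$-equivariant harmonic decomposition $\be=\bg+\bh$ to reduce the question to the leading harmonic part $\bh\in\HH^{3}(\RR^{3})$ alone, and then characterise the cubic harmonic tensors by the single quadratic covariant equation $\bd_{2}'=0$. The first step is a \emph{covariance reduction}: since $\bg=\bg(\be)$ and $\bh=\bh(\be)$ are linear covariants, the symmetry groups satisfy $G_{\be}=G_{\bg}\cap G_{\bh}$, so that $\be\in\cstrata{\octa^{-}}$ --- meaning some conjugate $g\,\octa^{-}g^{-1}$ fixes $\be$ --- if and only if that same conjugate fixes both $\bg$ and $\bh$.

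Second, I would compute the fixed vectors of $\octa^{-}$ on the summands carrying $\bg$. Set $\tetra=\octa^{-}\cap\SO(3)$, a subgroup of index two and order $12$; on $\HH^{1}(\RR^{3})$ its elements act through the untwisted representation (their determinant is $+1$) as the irreducible three-dimensional representation of the tetrahedral group, hence with no nonzero invariant, and likewise $\tetra$ has no nonzero invariant in $\HH^{2}(\RR^{3})$, hence none in $\HH^{2\sharp}(\RR^{3})$. Therefore $\octa^{-}$, and every conjugate of it, has no nonzero fixed vector in $\HH^{1}(\RR^{3})\oplus\HH^{1}(\RR^{3})\oplus\HH^{2\sharp}(\RR^{3})$, so ``$g\,\octa^{-}g^{-1}$ fixes $\bg$'' forces $\bg=0$. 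The statement is thereby reduced to the purely harmonic one: for $\bh\in\HH^{3}(\RR^{3})$, some conjugate of $\octa^{-}$ fixes $\bh$ if and only if $\bd_{2}'(\bh)=0$. For necessity, observe that $\bd_{2}=\bh:\bh$ transforms as a genuine second-order tensor, $\bd_{2}(\rho_{3}(g)\bh)=g\,\bd_{2}(\bh)\,g^{T}$; hence if $g\,\octa^{-}g^{-1}\subseteq G_{\bh}$ then $\bd_{2}(\bh)$ lies in the fixed space of $g\,\octa^{-}g^{-1}$ in $\Sym^{2}(\RR^{3})$. Since $\tetra$ has no nonzero invariant in $\HH^{2}(\RR^{3})$, this fixed space is $\RR\,\bq$, and $\RR\,\bq$ is $\OO(3)$-invariant, so the conjugating element is irrelevant: $\bd_{2}(\bh)$ is a multiple of $\bq$, i.e. $\bd_{2}'(\bh)=0$.

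Sufficiency is the crux. Suppose $\bd_{2}'(\bh)=0$, so $h_{ikl}h_{jkl}=\lambda\,\delta_{ij}$ with $\lambda=\frac{1}{3}\norm{\bh}^{2}\ge 0$. If $\lambda=0$ then $\bh=0$, which is isotropic and thus lies in $\cstrata{\octa^{-}}$. If $\lambda>0$, the octahedral symmetry must be produced from the constraint, which says that the three symmetric slices $(h_{ikl})_{kl}$ of $\bh$ form an orthogonal triple of equal norm in $\HH^{2}(\RR^{3})$, coupled through the total symmetry of $\bh$. I would finish this either (a) by putting $\bh$ in an $\SO(3)$-normal form --- the three rotational degrees of freedom trim the seven components of $\bh$ to a four-parameter family --- writing out the five scalar equations $(\bd_{2}')_{ij}=0$ explicitly and checking, by elimination, that every real solution is a scalar multiple of a rotate of the cubic harmonic tensor (the one associated with the polynomial $xyz$, for which one verifies directly that $\octa^{-}$ fixes it and that $\bh:\bh\propto\bq$); or (b) by listing the finitely many isotropy classes that occur in $\HH^{3}(\RR^{3})$ --- note that minus the identity acts there as minus the identity, so none of these classes contains it --- and checking, class by class on the associated fixed-point subspaces, that $\bd_{2}'=0$ can hold only for a class $\succeq[\octa^{-}]$. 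A sanity check: the fixed space of $\octa^{-}$ in $\HH^{3}(\RR^{3})$ is one-dimensional and $\dim\cstrata{\octa^{-}}=\dim\SO(3)+1=4$, so the zero locus of $\bd_{2}'$ in $\HH^{3}(\RR^{3})$ should be exactly the four-dimensional cone over the orbit of that cubic harmonic tensor; the real work --- and the step I expect to be the main obstacle --- is to show there are no further real components. Finally, the last sentence of the theorem follows because among the sixteen isotropy classes of $\Piez$ the only one strictly above $[\octa^{-}]$ is $[\OO(3)]$, whose stratum in $\Piez$ is $\set{0}$; hence a tensor of $\cstrata{\octa^{-}}$ is cubic precisely when it is nonzero, that is (given $\bg=0$) precisely when $\bh\ne 0$.
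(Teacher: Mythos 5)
Your reduction to the leading harmonic part and the necessity direction are correct and essentially match the paper: you argue (as the paper does) that $\bg$ and $\bd_{2}'$ are covariants landing in spaces ($\HH^{1}$, $\HH^{2\sharp}$, $\HH^{2}$) in which no conjugate of $\octa^{-}$ has a nonzero fixed vector, hence both must vanish on $\cstrata{\octa^{-}}$. Your sanity checks (the $\tetra$-character computation, the dimension count $\dim\cstrata{\octa^-}=4$ in $\HH^3$, and the closing observation that the only class strictly above $[\octa^{-}]$ in $\Piez$ is $[\OO(3)]$ with stratum $\set{0}$) are all sound.

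The gap is exactly where you flag it: sufficiency is not proved, only two unexecuted plans are sketched. The paper closes this gap with a conceptual argument that you did not find, namely a tailor-made integrity basis. Building on Smith--Bao's basis for $\RR[\HH^{3}(\RR^{3})]^{\SO(3)}$ and Olive--Auffray's reduction to four $\OO(3)$-invariants, the paper derives (its theorem~\ref{lem:Ik}) an alternative minimal integrity basis $I_{2}=\norm{\bh}^{2}$, $I_{4}=\norm{\bd_{2}'}^{2}$, $I_{6}=\norm{\bh:\bd_{2}'}^{2}$, $I_{10}=\norm{\bd_{2}'\times(\bh:\bd_{2}')}^{2}$ for $\RR[\HH^{3}(\RR^{3})]^{\OO(3)}$ in which the three nontrivial generators are all manifestly built from $\bd_{2}'$. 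Thus $\bd_{2}'=0$ forces $I_{4}=I_{6}=I_{10}=0$ while $I_{2}\ge 0$ is free; choosing $\delta$ with $6\delta^{2}=I_{2}(\bh)$ produces an explicitly cubic $\bh_{0}$ (the $xyz$ tensor you mention, $h_{123}=\delta$) with the same four invariant values, and the orbit-separating property of integrity bases for compact groups (cited from Abud--Sartori) then gives $\bh\in\Orb(\bh_{0})$, hence $\bh\in\cstrata{\octa^{-}}$. Your plans (a) and (b) (brute-force elimination in a normal form, or a class-by-class check through the isotropy lattice of $\HH^{3}(\RR^{3})$) could in principle be pushed through, but neither is carried out, and both would be substantially more laborious than the separation argument; the choice of an integrity basis adapted to $\bd_{2}'$ is the idea that makes sufficiency nearly immediate, and it is the piece your proposal is missing.
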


With the same proof as for theorem~\ref{thm:equiPBEla}, we have the following result.

\begin{thm}\label{thm:equiPBPiez}
  Let $\be=\bg+ \bh$ and $\be_{0}=\bg_{0}+ \bh_{0}$ be two piezoelectricity tensors, with $\bh$ and $\bh_{0}$ their leading harmonic parts.
  The 15-dimensional minimization problem~\eqref{eq:dist-piez1} is equivalent to the 7-dimensional polynomial optimization problem
  \begin{equation*}
    \min_{\bd_{2}' =0} \norm{\bh_{0}-\bh}^{2},
  \end{equation*}
  with $\be= \bh$, and
  \begin{equation*}
    \min_{\be\in \overline{\Sigma}_{\octa^{-}}}\norm{\be_{0}-\be}^{2} =  \norm{\bg_{0}}^{2} +\min_{\bd_{2}'=0} \norm{\bh_{0}-\bh}^{2}.
  \end{equation*}
\end{thm}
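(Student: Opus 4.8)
The plan is to follow, almost verbatim, the argument used for Theorem~\ref{thm:equiPBEla}, substituting the harmonic decomposition of $\Piez$ for that of $\Ela$ and Theorem~\ref{thm:Pcubic} for Theorem~\ref{thm:Ecubic}. The two ingredients are the orthogonal splitting $\be=\bg+\bh$ together with its norm identity~\eqref{eq:normepiez2}, and the polynomial characterization of the stratum from Theorem~\ref{thm:Pcubic}: $\be\in\overline{\Sigma}_{\octa^{-}}$ if and only if $\bg=0$ and $\bd_{2}'=0$.

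First I would expand the objective. Since $\bg=\bg(\be)$ and $\bh=\bh(\be)$ are linear covariants of $\be$, we have $\be_{0}-\be=(\bg_{0}-\bg)+(\bh_{0}-\bh)$ with the two summands orthogonal, so~\eqref{eq:normepiez2} gives $\norm{\be_{0}-\be}^{2}=\norm{\bg_{0}-\bg}^{2}+\norm{\bh_{0}-\bh}^{2}$. Next I would impose the constraint: by Theorem~\ref{thm:Pcubic}, membership in $\overline{\Sigma}_{\octa^{-}}$ forces $\bg=0$, while the remaining equation $\bd_{2}'=0$ involves only $\bh$, because $\bd_{2}=\bh:\bh$. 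Hence the feasible set is a product: $\bg$ must vanish, while $\bh$ ranges over the algebraic subset $\{\bd_{2}'(\bh)=0\}$ of $\HH^{3}(\RR^{3})$, and the minimization decouples. The $\bg$-block contributes the fixed term $\norm{\bg_{0}-0}^{2}=\norm{\bg_{0}}^{2}$, and what remains is $\min_{\bd_{2}'=0}\norm{\bh_{0}-\bh}^{2}$, a polynomial optimization problem in the $\dim\HH^{3}(\RR^{3})=7$ components of $\bh$ (with $\be=\bh$, since $\bg=0$). Putting the two pieces together gives both the announced equivalence and the formula
\[
  \min_{\be\in\overline{\Sigma}_{\octa^{-}}}\norm{\be_{0}-\be}^{2}
  =\norm{\bg_{0}}^{2}+\min_{\bd_{2}'=0}\norm{\bh_{0}-\bh}^{2}.
\]

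I do not expect a genuine obstacle: all the real work is already carried by Theorem~\ref{thm:Pcubic}, and the present statement is only its translation into the distance problem, just as Theorem~\ref{thm:equiPBEla} translates Theorem~\ref{thm:Ecubic}. The one point to keep track of is that \emph{both} implications of Theorem~\ref{thm:Pcubic} are used: the ``only if'' direction guarantees that every feasible $\be$ has $\bg=0$, so the $\bg$-term is constant over the feasible set; the ``if'' direction guarantees conversely that each $\bh$ with $\bd_{2}'(\bh)=0$ produces a feasible tensor $\be=\bh\in\overline{\Sigma}_{\octa^{-}}$, so nothing admissible is lost when passing to the reduced $7$-variable problem. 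The dimension drop is then immediate.
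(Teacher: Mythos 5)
Your proof is correct and is exactly what the paper intends: the paper explicitly states that Theorem~\ref{thm:equiPBPiez} has ``the same proof as for theorem~\ref{thm:equiPBEla},'' which is precisely what you carry out — expand the squared distance using the orthogonality $\norm{\be_{0}-\be}^{2}=\norm{\bg_{0}-\bg}^{2}+\norm{\bh_{0}-\bh}^{2}$, invoke Theorem~\ref{thm:Pcubic} to impose $\bg=0$ and $\bd_{2}'(\bh)=0$, and observe that the problem decouples into a constant $\norm{\bg_{0}}^{2}$ plus a $7$-variable optimization in $\bh\in\HH^{3}(\RR^{3})$. Your remark that both directions of Theorem~\ref{thm:Pcubic} are needed (one to constrain, one to ensure no admissible tensor is lost) is a sound and slightly more explicit rendering of the same argument.
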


\subsection{Resolution by Lasserre's method}

A third order harmonic tensor $\bh\in \HH^{3}(\RR^{3})$ has seven independent components and is represented by the following real matrix (in Voigt notation)
\begin{equation}\label{eq:Hpiezomatrice}
  [\bh]=
  \begin{pmatrix}
    h_{111}          & h_{122} & -h_{111}-h_{122} & h_{123}          & -h_{223}-h_{333} & h_{112} \\
    h_{112}          & h_{222} & -h_{112}-h_{222} & h_{223}          & h_{123}          & h_{122} \\
    -h_{223}-h_{333} & h_{223} & h_{333}          & -H_{112}-h_{222} & -h_{111}-h_{122} & h_{123}
  \end{pmatrix}
\end{equation}
The traceless second order tensor $\bd_{2}'$ has five independent components $(\bd_{2}')_{ij}$ detailed in \autoref{sec:d2-prime}.

We set
\begin{equation*}
  \xx=( h_{111},\, h_{112}, \, h_{122}, \, h_{123}, \, h_{222}, \, h_{223}, \,  h_{333}).
\end{equation*}
GloptiPoly computes the approximation
\begin{equation}\label{eq:minh0moinshpiez}
  \norm{\bh_{0}-\bh^{*}}^{2}
\end{equation}
of the minimum $\Delta(\be_{0},\cstrata{\octa^{-}})^{2} = \min_{\xx \in \widetilde{K}} f(\xx)$, where  $f(\xx)=\norm{\bh_{0}-\bh}^{2}$ and
\begin{equation*}
  \widetilde{K}=\set{ \xx;\  \bd_{2}'=0, \, c - f(\xx)\geq 0},
  \qquad c=3.
\end{equation*}
For $\be_{0}$ given by~\eqref{eq:PPiez}, we have (in C$^{2}$/m$^{4}$)
\begin{align*}
  f(\xx) & = 6h_{111}h_{122}+6h_{223}h_{333}+6h_{112}h_{222}+4h_{111}^{2}+6h_{112}^{2}+6h_{122}^{2}+4h_{222}^{2}
  \\
         & \quad +6h_{223}^{2}+4h_{333}^{2}+6h_{123}^{2}-0.1002 h_{111}-0.1742 h_{122}+0.1636 h_{123}
  \\
         & \quad -0.0114 h_{223}-5.2244 h_{333}+0.4574 h_{112}+0.0836 h_{222}+2.7367.
\end{align*}

\begin{rem}
  We take $c = 3 > f(0)$ to ensure the Archimedean property, but in the present case the convergence status $\xi$ does not seem to depend on $c$. Dropping the condition $c - f(\xx)\geq 0$  in $\tilde K$ also leads to an accurate computed optimum.
\end{rem}

We obtain the result $\min_{\xx \in \widetilde{K}} f(\xx) \approx f(\xx^{*})=1.060855$ C$^{2}$/m$^{4}$ at the first GloptiPoly relaxation $d=d_{0}=1$ (with convergence status $\xi=+1$ and for a computation time of 0.8 seconds). The components of the computed minimizer $\bh^{*}$ are (in C/m$^{2}$):
\begin{multline*}
  h_{111} = -0.075476 ,~h_{112}=-0.426450, ~h_{122} = 0.088998,~ h_{123} = -0.005937,
  \\
  ~h_{222}=0.412070, ~h_{223} = -0.308913, ~h_{333}=0.609783.
\end{multline*}

By theorem~\ref{thm:equiPBPiez}, the computed cubic tensor $\be^{*}\in \Sigma_{[\octa^{-}]}$ the closest to $\be_{0}$ is simply $\be^{*}=\bh^{*}$. In Voigt notation,
\begin{equation*}
  [\be^{*}]=
  \begin{pmatrix}
    -0.075476 & 0.088998  & -0.013521 & -0.005937 & -0.300870 & -0.426450 \\
    -0.426450 & 0.412070  & 0.0143797 & -0.308913 & -0.005937 & 0.088998  \\
    -0.300870 & -0.308913 & 0.609783  & 0.014379  & -0.013521 & -0.005937
  \end{pmatrix}
  \text{ C/m}^{2}
\end{equation*}
The distance and the relative distance to cubic piezoelectricity are finally
\begin{equation*}
  \Delta(\be_{0}, \cstrata{\octa^{-}})\approx \norm{\be_{0}-\be^{*}}=1.214681 \, \text{C/m}^{2},
  \qquad
  \frac{\norm{\be_{0}-\be^{*}}}{ \norm{\be_{0}}}=0.684256.
\end{equation*}
The results obtained for the raw piezoelectricity tensors $\be_{0}^{x}$ given in the \autoref{sec:Piez-Mannna} for wurzite $\text{Cr}_{x}\text{Al}_{1-x}\text{N}$, with different chromium concentrations, are summarized in \autoref{tab:res-wurzite}.

\begin{table}[h]
  \begin{center}
    \setlength{\arraycolsep}{1pt}
    \begin{tabular}{c|ccc}
      \toprule
      $x$     & $\Delta(\bE_{0}, \cstrata{\octa^{-}})$ & $\displaystyle\frac{\norm{\be_{0}-\be^{*}}}{ \norm{\be_{0}}}$ & Computation time (s) \\
      \midrule
      0 (AlN) & 1.214681                               & 0.684256                                                      & 0.7                  \\
      \midrule
      0.035   & 1.307327                               & 0.715295                                                      & 0.7                  \\
      \midrule
      0.07    & 1.364909                               & 0.729065                                                      & 0.8                  \\
      \midrule
      0.10    & 1.541726                               & 0.785604                                                      & 0.6                  \\
      \midrule
      0.13    & 1.542293                               & 0.758240                                                      & 1.0                  \\
      \midrule
      0.16    & 1.665883                               & 0.793355                                                      & 0.6                  \\
      \midrule
      0.19    & 1.852505                               & 0.813719                                                      & 0.7                  \\
      \midrule
      0.225   & 1.877377                               & 0.781094                                                      & 1.2                  \\
      \midrule
      0.255   & 1.944763                               & 0.752770                                                      & 0.7                  \\
      \bottomrule
    \end{tabular}
    \caption{Results for the raw piezoelectricity tensors of~\cite{MTG2018} for different Cr-concentrations $x$ (the distance $\Delta(\bE_{0}, \cstrata{\octa^{-}})\approx \norm{\be_{0}-\be^{*}}$ is in C/m$^{2}$).}
    \label{tab:res-wurzite}
  \end{center}
\end{table}

\begin{rem}
  The computation times are of the same order of magnitude as for the cubic elasticity case. For this quadratic optimization problem (with 7 variables) as well, they are lower than for the 6 variables but degree 3 optimization problem of \autoref{sec:isoT}.
\end{rem}

\section{Conclusion}

Some isotropy strata of tensorial representations of the orthogonal group are explicitly characterized by polynomial covariants. We have taken advantage of this fact to formulate the computation of the distance to these strata as a polynomial optimization problem. We have used the property that the isotropy classes for the representation of $\SO(3)$ on the vector space of elasticity tensors are in general semialgebraic. The present work shows the interest of the characterization of the isotropy classes by means of polynomial covariants, rather than by means of invariants. In particular, the covariant characterization of the cubic piezoelectricity symmetry stratum (theorem~\ref{thm:Pcubic}), which is the cornerstone of our methodology, is a new result.

We have then recalled Lasserre's method to solve polynomial optimization problems under semialgebraic constraints. Under the so-called Archimedean hypothesis, this approach consists in writing the initial problem as an infinite semidefinite program from which is constructed a sequence of relaxed semidefinite programs that converges to the desired global minimum. We have presented the corresponding algorithm implemented in the freeware GloptiPoly, in particular its stopping criterion.

We have applied this polynomial optimization method to compute the cubic tensor the closest to a raw (measured) constitutive tensor, both in continuum mechanics elasticity and piezoelectricity. We have considered the following examples
\begin{itemize}
  \item of an elasticity tensor measured by François and coworkers~\cite{FGB1998} for an aeronautics Nickel-based single crystal superalloy,
  \item of nine piezoelectricity tensors computed for wurtzite alloys using
        Density Functional Theory (DFT) and ab-initio simulations,
        by Manna and coworkers~\cite{MTG2018}.
\end{itemize}

In both cases, we took advantage of the distance being a coercive polynomial function to adapt the constraints so that they can satisfy the Archimedean condition, in order to ensure the convergence of the method to the desired minimum.

\appendix

\section{Explicit harmonic decomposition of an elasticity tensor}
\label{sec:Elast-Harm-Dec}

An elasticity tensor $\bE \in \Ela$ admits the following explicit harmonic decomposition~\cite{Bac1970}:
\begin{equation}\label{eq:harm_decomp}
  \bE=\bE^{iso}+\bq\underset{(4)}{\otimes} \ba+\bq\underset{(2,2)}{\otimes}\bb+\bH .
\end{equation}
where
\begin{equation}\label{eq:Eiso}
  \bE^{iso}= \frac{1}{15}\left(\alpha+2 \beta\right) \bq\underset{(4)}{\otimes} \bq+\frac{1}{6}\left(\alpha-\beta\right) \bq\underset{(2,2)}{\otimes} \bq,
\end{equation}
\begin{equation}\label{eq:alphabeta}
  \alpha=\tr(\bd), \qquad
  \beta=\tr(\bv),
\end{equation}
and
\begin{equation}\label{eq:ab}
  \ba=\frac{2}{7}(\bd'+2\bv'), \qquad
  \bb=2(\bd'-\bv').
\end{equation}
with $\bd'=\bd-\frac{1}{3}\tr(\bd)\bq$ and $\bv'=\bv-\frac{1}{3}\tr(\bv)\bq$ respectively the traceless parts of $\bd=\tr_{12}\bE$ and $\bv=\tr_{13}\bE$.

In~\eqref{eq:harm_decomp}, $\bq$ is the Euclidean canonical bilinear 2-form represented by the components $(\delta_{ij})$ in any orthonormal basis and the tensor products $\underset{(4)}{\otimes}$ and $\underset{(2,2)}{\otimes}$, between symmetric second-order tensors $\ba$, $\bb$, are defined as follows:
\begin{equation*}
  (\ba\underset{(4)}{\otimes} \bb)_{ijkl}= (\ba\odot\bb)_{ijkl}=\frac{1}{6} (a_{ij} b_{kl} + b_{ij}a_{kl} + a_{ik}b_{jl} + b_{ik}a_{jl} + a_{il}b_{jk} + b_{il}a_{jk}),
\end{equation*}
and
\begin{equation*}
  (\ba\underset{(2,2)}{\otimes}\bb)_{ijkl} = \frac{1}{6}
  (2a_{ij} b_{kl} + 2b_{ij}a_{kl}-a_{ik}b_{jl}-a_{il}b_{jk}- b_{ik}a_{jl}-b_{il}a_{jk}).
\end{equation*}
We have, for Euclidean norm,
\begin{equation*}
  \norm{\bE}^{2} =5 \alpha ^{2}+4 \beta ^{2}+  \frac{2}{21} \norm{\bd^{\prime}+2 \bv^{\prime}}^{2} +\frac{4}{3}\norm{\bd^{\prime}- \bv^{\prime}}^{2}+ \norm{\bH}^{2},
\end{equation*}

Using~\eqref{eq:alphabeta}--\eqref{eq:ab} we obtain for the experimental elasticity tensor $\bE_{0}$ (given by~\eqref{eq:E0}) the harmonic decomposition
\begin{equation*}
  \bE_{0}=(\alpha_{0},\beta_{0},\bd'_{0},\bv'_{0},\bH_{0})
\end{equation*}
with
\begin{equation*}
  \alpha_{0}=1531,
  \qquad
  \beta_{0}=1479,
\end{equation*}

\begin{equation*}
  \bd_{0}'=
  \begin{pmatrix}
    \frac{11}{3} & 2           & 14            \\
    2            & \frac{5}{3} & 23            \\
    14           & 23          & -\frac{16}{3}
  \end{pmatrix}
  \text{ GPa , }
  \qquad
  \bv_{0}'=
  \begin{pmatrix}
    -1  & -11 & -1 \\
    -11 & 9   & -1 \\
    -1  & -1  & -8
  \end{pmatrix}
  \text{ GPa},
\end{equation*}

\begin{equation*}
  \bE_{0}^{iso}= \frac{1}{15}\left(\alpha_{0}+2 \beta_{0}\right)  \bq\underset{(4)}{\otimes} \bq+\frac{1}{6}\left(\alpha_{0}-\beta_{0}\right) \bq\underset{(2,2)}{\otimes} \bq,
\end{equation*}
and, in Voigt notation,
\begin{equation*}
  [\bH_{0}]=
  \begin{pmatrix}
    -\frac{1986}{35} & \frac{1093}{35} & \frac{893}{35} & 5 & \frac{352}{7} & -\frac{99}{7} \\ \\ \frac{1093}{35} & -\frac{2306}{35} & \frac{1213}{35} & -31 & \frac{3}{7} & \frac{132}{7} \\ \\ \frac{893}{35} & \frac{1213}{35} & -\frac{2106}{35} & 26 & -\frac{355}{7} & -\frac{33}{7}  \\ \\ 5 & -31 & 26 & \frac{1213}{35} & -\frac{33}{7} & \frac{3}{7} \\  \\ \frac{352}{7} & \frac{3}{7} & -\frac{355}{7} & -\frac{33}{7} & \frac{893}{35} & 5 \\ \\ -\frac{99}{7} & \frac{132}{7} & -\frac{33}{7} & \frac{3}{7} & 5 & \frac{1093}{35}
  \end{pmatrix}
  \text{ GPa}.
\end{equation*}

\section{Components of second-order covariant $\bd_{2}'$}
\label{sec:d2-prime}

\subsection{Elasticity tensor case}

The components $(\bd_{2}')_{ij}=(\bd_{2})_{ij}-\frac{1}{3} (\bd_{2}')_{kk} \delta_{ij}$ of $\bd_{2}=\bH\3dots \bH$, with $\bH$ the fourth-order harmonic tensor given by~\eqref{eq:HVoigt}, are:
\begin{align*}
  (\bd_{2}')_{11} & = \frac{2}{3} \big(-4 \Lambda_{1}^{2}-\Lambda_{1} \Lambda_{2}-\Lambda_{1} \Lambda_{3}+2 \Lambda_{2}^{2}+2 \Lambda_{2} \Lambda_{3}+2 \Lambda_{3}^{2}  +X_{1}^{2}-4 X_{1} X_{2}-4 X_{2}^{2}
  \\
                  & \quad +Y_{1}^{2}+5 Y_{1} Y_{2}+2 Y_{2}^{2}-2 Z_{1}^{2}-Z_{1} Z_{2}+2 Z_{2}^{2}\big),
  \\
  (\bd_{2}')_{22} & = -\frac{2}{3} \big(-2 \Lambda_{1}^{2}+\Lambda_{1} \Lambda_{2}-2 \Lambda_{1} \Lambda_{3}+4 \Lambda_{2}^{2}+\Lambda_{2} \Lambda_{3}-2 \Lambda_{3}^{2} +2 X_{1}^{2}+X_{1} X_{2}-2 X_{2}^{2}
  \\
                  & \quad -Y_{1}^{2}+4 Y_{1} Y_{2}+4 Y_{2}^{2}-Z_{1}^{2}-5 Z_{1} Z_{2}-2 Z_{2}^{2}\big),
  \\
  (\bd_{2}')_{12} & = 3 X_{1} Y_{1} + 3 X_{2} Y_{1} - 4 X_{1} Y_{2} - X_{2} Y_{2} + 4 Z_{1}\Lambda_{1} +
  Z_{2}\Lambda_{1} + 3 Z_{1}\Lambda_{2} - Z_{2}\Lambda_{2} - 2 Z_{1}\Lambda_{3},
  \\
  (\bd_{2}')_{13} & = 3 X_{1} (Z_{1} + Z_{2}) - X_{2} (4 Z_{1} + Z_{2}) + 3 Y_{1}\Lambda_{1} - Y_{2}\Lambda_{1} - 2 Y_{1}\Lambda_{2} +
  4 Y_{1}\Lambda_{3} + Y_{2}\Lambda_{3},
  \\
  (\bd_{2}')_{23} & = 3 Y_{1} Z_{1} + 3 Y_{2} Z_{1} - 4 Y_{1} Z_{2} - Y_{2} Z_{2} - 2 X_{1}\Lambda_{1} + 4 X_{1}\Lambda_{2} + X_{2}\Lambda_{2} + 3 X_{1}\Lambda_{3} - X_{2}\Lambda_{3}.
\end{align*}

\subsection{Piezoelectricity tensor case}

The components $(\bd_{2}')_{ij}=(\bd_{2})_{ij}-\frac{1}{3} (\bd_{2}')_{kk} \delta_{ij}$ of $\bd_{2}=\bh: \bh$ are:
\begin{align*}
  (\bd_{2}')_{11} & = \frac{2}{3} \left(h_{111}^{2}-3 h_{112} H_{222}-2 h_{222}^{2}+3 h_{223} h_{333}+h_{333}^{2}\right),
  \\
  (\bd_{2}')_{22} & = -\frac{2}{3} \left(2 h_{111}^{2}+3 h_{111} h_{122}-h_{222}^{2}+h_{333} (3 h_{223}+2 h_{333})\right),
  \\
  (\bd_{2}')_{12} & = h_{111} (2 h_{112}+h_{222})+3 h_{112} h_{122}+2 h_{122} h_{222}-2 h_{123} h_{333},
  \\
  (\bd_{2}')_{13} & = h_{111} h_{223}+h_{122} (3 h_{223}+h_{333})-2 h_{123} h_{222},
  \\
  (\bd_{2}')_{23} & = -2 h_{111} h_{123}-h_{112} (3 h_{223}+2 h_{333})-h_{222} (h_{223}+h_{333}).
\end{align*}

\section{Proof of theorem~\ref{thm:Pcubic}}
\label{sec:proof-d2-prime}

Smith and Bao~\cite{SB1997} have derived a minimal integrity basis of five invariants for the algebra $\RR[\HH^{3}(\RR^{3})]^{\SO(3)}$, of polynomial $\SO(3)$-invariants of the third-order harmonic tensors $\bh\in \HH^{3}(\RR^{3})$. These five invariants (equations (2.3) and (2.4) in~\cite{SB1997}) can be recast in a more intrinsic form as
\begin{gather*}
  I_{2} = \norm{\bh}^{2}, \qquad  K_{4} = \tr \bd_{2}^{\, 2}, \qquad I_6 =\norm{\vv_{3}}^{2}, \\
  K_{10} = \bh(\vv_{3}, \vv_{3}, \vv_{3}), \qquad K_{15} = \det(\vv_{3}, \bd_{2} \cdot \vv_{3}, \vv_{3}\cdot \bh\cdot \vv_{3}),
\end{gather*}
where  $\bd_{2}=\bh:\bh$ and $\vv_{3} := \bh:\bd_{2}'$.
In~\cite{OA2014}, Olive and Auffray have used these results to deduce that a minimal integrity basis for the algebra $\RR[\HH^{3}(\RR^{3})]^{\OO(3)}$, of polynomial $\OO(3)$-invariants of $\bh\in \HH^{3}(\RR^{3})$ consists of the four invariants
\begin{equation*}
  I_{2}, \quad K_{4}, \quad I_6, \quad \text{and} \quad K_{10}.
\end{equation*}
Here, we will formulate alternative integrity bases for both $\RR[\HH^{3}(\RR^{3})]^{\SO(3)}$ and $\RR[\HH^{3}(\RR^{3})]^{\OO(3)}$, which happen to be more useful in order to characterize the cubic symmetry class in $\HH^{3}(\RR^{3})$ for $\OO(3)$. These will be used to prove theorem~\ref{thm:Pcubic}.

\begin{thm}\label{lem:Ik}
  Let $\bh\in\HH^{3}(\RR^{3})$ be an harmonic third-order tensor, $\bd_{2}=\bh:\bh$, and
  \begin{equation*}
    \vv_{3} := \bh:\bd_{2}', \qquad \vv_5 := \bd_{2}'\cdot \vv_{3}, \qquad \vv_7 := \bd_{2}'\cdot \vv_5 \quad \text{where} \quad \bd_{2}' = \bd_{2}-\frac{1}{3}\tr(\bd_{2})\bq.
  \end{equation*}
  \begin{enumerate}
    \item A minimal integrity basis of $\RR[\HH^{3}(\RR^{3})]^{\SO(3)}$ is constituted by the five invariants
          \begin{gather*}
            I_{2} := \tr \bd_{2}=\norm{\bh}^{2}, \qquad I_{4} := \tr( \bd_{2}^{\prime \,2})= \norm{\bd_{2}^{\prime}}^{2},\qquad
            I_6 := \norm{\vv_{3}}^{2}, \\
            I_{10} := \norm{\bd_{2}' \times \vv_{3}}^{2}, \qquad I_{15} := \det(\vv_{3}, \vv_5, \vv_7).
          \end{gather*}
    \item A minimal integrity basis of $\RR[\HH^{3}(\RR^{3})]^{\OO(3)}$ is constituted by the four invariants $I_{2}$, $I_{4}$, $I_{6}$, and $I_{10}$.
  \end{enumerate}
\end{thm}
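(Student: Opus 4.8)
The plan is to derive theorem~\ref{lem:Ik} from the two integrity bases recalled just above: the Smith--Bao basis $\{I_{2},K_{4},I_{6},K_{10},K_{15}\}$ of $\RR[\HH^{3}(\RR^{3})]^{\SO(3)}$ and the Olive--Auffray basis $\{I_{2},K_{4},I_{6},K_{10}\}$ of $\RR[\HH^{3}(\RR^{3})]^{\OO(3)}$. The key observation is that once one shows that the new families $\{I_{2},I_{4},I_{6},I_{10},I_{15}\}$ and $\{I_{2},I_{4},I_{6},I_{10}\}$ \emph{generate} these same algebras, minimality is automatic: by graded Nakayama, a set of homogeneous generators of a connected graded $\RR$-algebra $A$ is minimal precisely when its image is a basis of $A_{+}/A_{+}^{2}$, and since the Smith--Bao basis is already minimal we know $\dim_{\RR}(A_{+}/A_{+}^{2})=5$ for $A=\RR[\HH^{3}(\RR^{3})]^{\SO(3)}$ (and $=4$ in the $\OO(3)$ case); hence any generating family of the right cardinality is a minimal integrity basis. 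So the whole proof reduces to a generation statement.

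First I would record that the new quantities have the correct invariance type: $I_{2},I_{4},I_{6},I_{10}$ are squared norms of $\OO(3)$-covariant tensors, hence $\OO(3)$-invariant, while $I_{15}=\det(\vv_{3},\vv_{5},\vv_{7})$ is a determinant of three $\OO(3)$-covariant vectors, hence $\SO(3)$-invariant and odd under $-\id\in\OO(3)$, matching the parity of $K_{15}$. The elementary part of the generation claim is then immediate: from $\bd_{2}=\bd_{2}'+\tfrac13 I_{2}\,\bq$ one gets $K_{4}=\tr\bd_{2}^{\,2}=I_{4}+\tfrac13 I_{2}^{2}$, so $\RR[I_{2},I_{4}]=\RR[I_{2},K_{4}]$, and $I_{6}$ is literally common to both lists. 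What remains is to recover $K_{10}$ and $K_{15}$.

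For $K_{10}$: it is an $\OO(3)$-invariant of degree $10$, so by the Olive--Auffray basis and homogeneity $I_{10}$ must be an $\RR$-linear combination of $I_{2}^{5}$, $I_{2}^{3}I_{4}$, $I_{2}I_{4}^{2}$, $I_{2}^{2}I_{6}$, $I_{4}I_{6}$ and $K_{10}$; write $I_{10}=P(I_{2},I_{4},I_{6})+\lambda K_{10}$. If $\lambda\neq0$ then $K_{10}=\lambda^{-1}\bigl(I_{10}-P(I_{2},I_{4},I_{6})\bigr)\in\RR[I_{2},I_{4},I_{6},I_{10}]$, which gives $\RR[\HH^{3}(\RR^{3})]^{\OO(3)}=\RR[I_{2},I_{4},I_{6},I_{10}]$ and settles part (ii). For $K_{15}$: the involution $\bh\mapsto-\bh$ makes $\RR[\HH^{3}(\RR^{3})]^{\SO(3)}$ a $\ZZ_{2}$-graded algebra whose even part is $\RR[\HH^{3}(\RR^{3})]^{\OO(3)}$ and whose odd part, by Smith--Bao, is the free rank-one module over the even part generated by $K_{15}$; since $I_{15}$ is odd of degree $15$, it must equal $cK_{15}$ for a scalar $c$, and $c\neq0$ because one can choose $\bh$ with $\bd_{2}'$ having three distinct eigenvalues and $\vv_{3}$ with nonzero component on each eigenline, so that $\vv_{3}$, $\vv_{5}=\bd_{2}'\cdot\vv_{3}$, $\vv_{7}=\bd_{2}'^{2}\cdot\vv_{3}$ are linearly independent by a Vandermonde argument. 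Then $K_{15}\in\RR[I_{2},I_{4},I_{6},I_{10},I_{15}]$, which together with the previous paragraph establishes generation in the $\SO(3)$ case and proves part (i).

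The hard part is the inequality $\lambda\neq0$, i.e.\ that $I_{10}$ is genuinely new and not a polynomial in $I_{2},I_{4},I_{6}$ alone. I would reduce $\bh$ to the eigenframe of $\bd_{2}'$: in that frame $\bd_{2}$ is diagonal, $I_{4}=\sum_{i}\mu_{i}^{2}$ and $I_{6}=\sum_{i}v_{i}^{2}$, where the $\mu_{i}$ are the eigenvalues of $\bd_{2}'$ and the $v_{i}$ the components of $\vv_{3}$, and a short computation shows that $I_{10}$ is proportional to $(\mu_{2}-\mu_{3})^{2}v_{1}^{2}+(\mu_{1}-\mu_{3})^{2}v_{2}^{2}+(\mu_{1}-\mu_{2})^{2}v_{3}^{2}$. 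It then suffices to exhibit two harmonic tensors sharing the same triple $(I_{2},I_{4},I_{6})$ but with different $I_{10}$: fixing $\sum\mu_{i}^{2}$ and $\sum v_{i}^{2}$ and redistributing $\vv_{3}$ towards the eigenline with the largest gap $(\mu_{j}-\mu_{k})^{2}$ visibly changes $I_{10}$, and such configurations are realized by genuine $\bh\in\HH^{3}(\RR^{3})$ (one checks realizability on an explicit low-parameter family, for instance one carrying a residual symmetry that keeps $\bd_{2}'$ diagonal). This realizability-plus-non-constancy verification is the only substantial computation; the parity bookkeeping and the Vandermonde non-vanishing of $I_{15}$ are routine.
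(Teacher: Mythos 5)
Your strategy differs structurally from the paper's, and the comparison is instructive. The paper simply writes down the explicit relations
\begin{align*}
K_{4} &= I_{4}+\tfrac13 I_{2}^{2}, \\
K_{10} &= -\tfrac43 I_{10}-\tfrac{1}{27}I_{2}^{3}I_{4}+\tfrac19 I_{2}^{2}I_{6}+\tfrac29 I_{2}I_{4}^{2}+\tfrac23 I_{4}I_{6}, \\
K_{15} &= 2\,I_{15},
\end{align*}
which makes generation immediate, and then invokes~\cite{DL1985/86} (equivalently, your graded Nakayama remark) for minimality. You instead argue structurally: by degree and by parity under $\bh\mapsto-\bh$, $I_{10}$ must be an affine expression $P(I_{2},I_{4},I_{6})+\lambda K_{10}$ and $I_{15}$ must be $c\,K_{15}$, and you reduce everything to showing $\lambda\neq0$ and $c\neq0$. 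That logical scaffolding is correct: the even part of $\RR[\HH^{3}]^{\SO(3)}$ is $\RR[\HH^{3}]^{\OO(3)}$, the odd part is a free rank-one module over it generated by $K_{15}$ because the algebra is a domain, and homogeneity does force the stated shapes. If you knew $\lambda\neq0$ and $c\neq0$, parts (i) and (ii) would indeed follow by cardinality.

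The gap is that the two non-vanishing claims are precisely the substance of the theorem, and you do not actually establish them. For $c\neq0$ you need to exhibit an $\bh\in\HH^{3}(\RR^{3})$ whose $\bd_{2}'$ has distinct eigenvalues and whose $\vv_{3}=\bh:\bd_{2}'$ has all three components nonzero in that eigenframe; for $\lambda\neq0$ you need two tensors $\bh_{1},\bh_{2}$ with equal $(I_{2},I_{4},I_{6})$ and distinct $I_{10}$. The phrase ``such configurations are realized by genuine $\bh$'' hides the real difficulty: $\bd_{2}'$ and $\vv_{3}$ are both covariants of the same $\bh$, so the eigenvalues $\mu_{i}$ and the components $v_{i}$ cannot be chosen independently. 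Asserting that one can ``redistribute $\vv_{3}$'' at fixed $\bd_{2}'$ is close to assuming the functional independence you are trying to prove. Either route ultimately requires an explicit evaluation on concrete tensors of $\HH^{3}(\RR^{3})$ (or a symbolic computation), and that step is missing here. The paper's choice — producing the three identities, which a CAS verifies mechanically — is the more economical way to discharge exactly this computational burden, since a single identity simultaneously certifies generation and pins down $\lambda=-3/4$, $c=1/2$ without any realizability analysis.
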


\begin{proof}
  To prove the theorem, it is enough to show that Smith and Bao's invariants can be expressed as polynomials of $I_{2}$, $I_{4}$, $I_{6}$, $I_{10}$, $I_{15}$, since, then, this set will be generating and moreover the cardinal of a minimal integrity basis of homogeneous invariants does not depend on the choice of a particular basis~\cite{DL1985/86}. Indeed, one can check that
  \begin{align*}
     & K_{4} = I_{4} + \frac{1}{3} {I_{2}}^{2},
    \\
     & K_{10} = - \frac{4}{3}I_{10} - \frac{1}{27} {I_{2}}^{3} I_{4} + \frac{1}{9} {I_{2}}^{2} I_{6} + \frac{2}{9}I_{2} {I_{4}}^{2} + \frac{2}{3} I_{4} I_{6},
    \\
     & K_{15} = 2 I_{15},
  \end{align*}
  which achieves the proof.
\end{proof}

\begin{proof}[Proof of theorem~\ref{thm:Pcubic}]
  Let $\be=\bg+ \bh$ be a piezoelectricity tensor,
  \begin{equation*}
    \bg=(\vv, \ww, \ba) \in \HH^{1}(\RR^3)\oplus \HH^{1}(\RR^3) \oplus \HH^{2 \sharp}(\RR^3),
    \qquad
    \bh\in \HH^{3}(\RR^3).
  \end{equation*}
  If $\be \in \cstrata{\octa^{-}}$, then $\bg=(\vv, \ww, \ba)\in \cstrata{\octa^{-}}$ vanishes since an element in $ \HH^{1}(\RR^3)$ or $\HH^{2 \sharp}(\RR^3)$ with at least cubic symmetry ($[\octa^{-}]$) is necessarily isotropic. For the same reason $\bd_{2}'(\bh)=0$.
  Conversely, if $\bg=0$, then $\be=\bh\in \HH^{3}(\RR^3)$ is harmonic, and it suffices to show that $\bh\in \cstrata{\octa^{-}}$ (is at least cubic). Since we assume furthermore $\bd_{2}'=0$, we have
  \begin{equation*}
    I_{2}(\bh) = \norm{\bh}^{2} \ge 0, \qquad I_{4}(\bh) = 0, \qquad I_{6}(\bh) = 0, \qquad I_{10}(\bh) = 0.
  \end{equation*}
  Now an harmonic tensor in $\HH^{3}(\RR^{3})$ which is fixed by $\octa^{-}$ is written (in Voigt notation~\eqref{eq:Hpiezomatrice}) as
  \begin{equation*}
    \bh_{0} = \delta
    \begin{pmatrix}
      0 & 0 & 0 & 1 & 0 & 0 \\
      0 & 0 & 0 & 0 & 1 & 0 \\
      0 & 0 & 0 & 0 & 0 & 1 \\
    \end{pmatrix}.
  \end{equation*}
  For such a tensor we get
  \begin{equation*}
    I_{2}(\bh_{0}) = 6 \delta^{2}, \qquad I_{4}(\bh_{0}) = 0, \qquad I_{6}(\bh_{0}) = 0, \qquad I_{10}(\bh_{0}) = 0.
  \end{equation*}
  Therefore, since $I_{2}(\bh)=\norm{\bh}^{2}\geq 0$, we can find a real number $\delta$ such that $6 \delta^{2} = I_{2}(\bh)$, and thus an at least cubic tensor $\bh_{0}$ such that
  \begin{equation*}
    I_{2}(\bh_{0}) = I_{2}(\bh),
    \qquad
    I_{4}(\bh_{0}) = I_{4}(\bh),
    \qquad
    I_{6}(\bh_{0}) = I_{6}(\bh),
    \qquad
    I_{10}(\bh_{0}) = I_{10}(\bh).
  \end{equation*}
  But an integrity basis for a real representation of a compact group \emph{separate the orbits}~\cite[Appendix C]{AS1983}.
  Hence, $\bh$ and $\bh_{0}$ are necessarily in the same orbit, which means that $\bh = \rho_{3}(g) \bh_{0}$, for some $g \in \OO(3)$.
\end{proof}

\section{Raw piezoelectricity tensors for wurtzite}
\label{sec:Piez-Mannna}

The raw piezoelectricity tensors $\be_{0}^{x}$ considered in \autoref{sec:piezo} correspond to the mean values computed in~\cite{MTG2018} for
wurtzite $\text{Cr}_{x}\text{Al}_{1-x}\text{N}$, with $x$ the chromium concentration (in C/m$^{2}$),

\begin{align*}
  [\be_{0}^{0.035}]= &
  \left(
  \begin{array}{cccccc}
      -0.0329 & 0.0599  & -0.0195 & 0.0267  & -0.2327 & -0.0988 \\
      -0.0548 & -0.0129 & -0.0063 & -0.2075 & -0.0051 & -0.0293 \\
      -0.5872 & -0.4900 & 1.5560  & -0.0218 & -0.0278 & -0.0115 \\
    \end{array}
  \right),
  \\
  [\be_{0}^{0.07}]=  &
  \left(
  \begin{array}{cccccc}
      -0.0393 & 0.0185  & 0.0048 & 0.0290  & -0.2171 & -0.0436 \\
      -0.07   & 0.0554  & 0.0137 & -0.1574 & 0.0198  & 0.0044  \\
      -0.5179 & -0.5886 & 1.6521 & -0.0085 & -0.0095 & -0.0119 \\
    \end{array}
  \right),
  \\
  [\be_{0}^{0.10}]=  &
  \left(
  \begin{array}{cccccc}
      0.0291  & -0.0141 & -0.0523 & -0.0016 & 0.0028 & 0.0138  \\
      -0.0611 & 0.0819  & -0.0567 & -0.1841 & 0.0116 & 0.0270  \\
      -0.5244 & -0.5918 & 1.7715  & -0.0018 & 0.0066 & -0.0145 \\
    \end{array}
  \right),
  \\
  [\be_{0}^{0.13}]=  &
  \left(
  \begin{array}{cccccc}
      -0.0985 & 0.1138  & 0.047   & -0.0169 & -0.0169 & -0.0984 \\
      0.0558  & 0.0183  & -0.0367 & -0.1735 & -0.0384 & 0.0474  \\
      -0.5441 & -0.5455 & 1.8506  & -0.0148 & -0.0016 & -0.0193 \\
    \end{array}
  \right),
  \\
  [\be_{0}^{0.16}]=  &
  \left(
  \begin{array}{cccccc}
      0.0315  & -0.0375 & 0.0273 & 0.0206  & 0.0206 & 0.0933  \\
      -0.215  & -0.0717 & 0.0845 & -0.2157 & 0.0438 & -0.0332 \\
      -0.4517 & -0.5587 & 1.9243 & 0.0447  & 0.0277 & -0.0482 \\
    \end{array}
  \right)	,             \\
  [\be_{0}^{0.19}]=  &
  \left(
  \begin{array}{cccccc}
      0.4524  & 0.3564  & -0.0827 & -0.0276 & -0.0276 & 0.1067  \\
      -0.0783 & 0.0868  & 0.0318  & 0.0037  & -0.1053 & -0.0765 \\
      -0.5768 & -0.4566 & 2.0350  & -0.1332 & -0.1016 & -0.1253 \\
    \end{array}
  \right),
  \\
  [\be_{0}^{0.225}]= &
  \left(
  \begin{array}{cccccc}
      0.0428  & 0.0974  & -0.0429 & -0.0319 & -0.0363 & 0.0099  \\
      -0.1399 & -0.2386 & -0.0253 & -0.1505 & 0.0143  & -0.1770 \\
      -0.5800 & -0.5552 & 2.2197  & 0.0164  & 0.0048  & 0.0234  \\
    \end{array}
  \right),
  \\
  [\be_{0}^{0.255}]= &
  \left(
  \begin{array}{cccccc}
      -0.0914 & 0.0758  & 0.0000 & -0.0022 & -0.2835 & 0.0000  \\
      0.0000  & -0.0022 & 0.0000 & -0.2660 & 0.0002  & -0.0020 \\
      -0.6063 & -0.5847 & 2.3709 & -0.0714 & -0.0738 & -0.0559 \\
    \end{array}
  \right).
\end{align*}



\begin{thebibliography}{10}

\bibitem{AS1981}
M.~Abud and G.~Sartori.
\newblock {T}he geometry of orbit-space and natural minima of {H}iggs
  potentials.
\newblock {\em Phys. Lett. B}, 104(2):147--152, 1981.

\bibitem{AS1983}
M.~Abud and G.~Sartori.
\newblock {T}he geometry of spontaneous symmetry breaking.
\newblock {\em Ann. Physics}, 150(2):307--372, 1983.

\bibitem{Ali1995}
F.~Alizadeh.
\newblock Interior point methods in semidefinite programming with applications
  to combinatorial optimization.
\newblock {\em SIAM Journal on Optimization}, 5(1):13--51, 1995.

\bibitem{ADKD2021}
A.~Antonelli, B.~Desmorat, B.~Kolev, and R.~Desmorat.
\newblock Distance to plane elasticity orthotropy by {E}uler-{L}agrange method,
  arxiv, doi: 10.48550/arxiv.2107.14456, 2021.

\bibitem{Art1993}
R.~Arts.
\newblock {\em A study of general anisotropic elasticity in rocks by wave
  propagation}.
\newblock PhD thesis, PhD University Pierre et Marie Curie, Paris 6, 1993.

\bibitem{AKP2014}
N.~Auffray, B.~Kolev, and M.~Petitot.
\newblock On anisotropic polynomial relations for the elasticity tensor.
\newblock {\em J. Elasticity}, 115(1):77--103, 2014.

\bibitem{Bac1970}
G.~Backus.
\newblock A geometrical picture of anisotropic elastic tensors.
\newblock {\em Reviews of geophysics}, 8(3):633--671, 1970.

\bibitem{BCM2007}
D.~Benterki, J.-P. Crouzeix, and B.~Merikhi.
\newblock A numerical feasible interior point method for linear semidefinite
  programs.
\newblock {\em RAIRO Oper. Res.}, 41(1):49--59, 2007.

\bibitem{BCR2013}
J.~Bochnak, M.~Coste, and M.-F. Roy.
\newblock {\em Real Algebraic Geometry}, volume~36.
\newblock Springer Berlin Heidelberg, Nov. 2013.

\bibitem{Bre1960}
G.~E. Bredon.
\newblock Finiteness of number of orbit types.
\newblock In {\em A. Borel, Seminar on transformation groups. With
  contributions by G. Bredon, EE Floyd, D. Montgomery, R. Palais. Annals of
  Mathematics Studies}, number~46, 1960.

\bibitem{Bre1972}
G.~E. Bredon.
\newblock {\em Introduction to compact transformation groups}.
\newblock Academic press, 1972.

\bibitem{BS2008}
I.~Bucataru and M.~A. Slawinski.
\newblock Invariant properties for finding distance in space of~elasticity
  tensors.
\newblock {\em Journal of Elasticity}, 94(2):97--114, nov 2008.

\bibitem{Cia1988}
P.~G. Ciarlet.
\newblock {\em Mathematical elasticity. Vol. I}, volume~20 of {\em Studies in
  Mathematics and its Applications}.
\newblock North-Holland Publishing Co., Amsterdam, 1988.
\newblock Three-dimensional elasticity.

\bibitem{Cos2002}
M.~Coste.
\newblock {A}n {I}ntroduction to {S}emialgebraic {G}eometry.
\newblock Université de Rennes 1, 2002.

\bibitem{CF1996}
R.~Curto and L.~Fialkow.
\newblock Solution of the truncated complex moment problem for flat data.
\newblock {\em Memoirs of the American Mathematical Society},
  119(568):2825--2855, 1996.

\bibitem{CF2000}
R.~Curto and L.~Fialkow.
\newblock The truncated complex $k$-moment problem.
\newblock {\em Transactions of the American Mathematical Society},
  352(10):2825--2855, 2000.

\bibitem{Dan1963}
G.~Dantzig.
\newblock Linear programming and extensions, princeton, univ.
\newblock {\em Press, Princeton, NJ}, 1963.

\bibitem{Del2005}
J.~Dellinger.
\newblock Computing the optimal transversely isotropic approximation of a
  general elastic tensor.
\newblock {\em {Geophysics}}, 70(5):11--20, 2005.

\bibitem{DHer2012}
D.~Den~Hertog.
\newblock {\em Interior point approach to linear, quadratic and convex
  programming: algorithms and complexity}, volume 277.
\newblock Springer Science \& Business Media, 2012.

\bibitem{DL1985/86}
J.~Dixmier and D.~Lazard.
\newblock Le nombre minimum d'invariants fondamentaux pour les formes binaires
  de degré {$7$}.
\newblock {\em Portugal. Math.}, 43(3):377--392, 1985/86.

\bibitem{EM1990}
A.~Eringen and G.~Maugin.
\newblock {\em Electrodynamics of Continua , tomes I et II}.
\newblock Springer-Verlag, 1990.

\bibitem{FV1996}
S.~Forte and M.~Vianello.
\newblock Symmetry classes for elasticity tensors.
\newblock {\em Journal of Elasticity}, 43(2):81--108, 1996.

\bibitem{FV1997}
S.~Forte and M.~Vianello.
\newblock {S}ymmetry classes and harmonic decomposition for photoelasticity
  tensors.
\newblock {\em International Journal of Engineering Science},
  35(14):1317--1326, 1997.

\bibitem{FBG1996}
M.~Fran{ç}ois, Y.~Berthaud, and G.~Geymonat.
\newblock Une nouvelle analyse des symétries d'un matériau élastique
  anisotrope. exemple d'utilisation à partir de mesures ultrasonores.
\newblock {\em C. R. Acad. Sci. Paris, Série IIb}, 322:87--94, 1996.

\bibitem{FGB1998}
M.~Fran{ç}ois, G.~Geymonat, and Y.~Berthaud.
\newblock Determination of the symmetries of an experimentally determined
  stiffness tensor: Application to acoustic measurements.
\newblock {\em Int. J. Sol. Struct.}, 35(31-32):4091--4106, 1998.

\bibitem{Fre2004}
R.~M. Freund.
\newblock Introduction to semidefinite programming (sdp).
\newblock {\em Massachusetts Institute of Technology}, pages 8--11, 2004.

\bibitem{GTT1963}
D.~Gazis, I.~Tadjbakhsh, and R.~Toupin.
\newblock The elastic tensor of given symmetry nearest to an anisotropic
  elastic tensor.
\newblock {\em Acta Crystallographica}, 16(9):917--922, 1963.

\bibitem{GW2002}
G.~Geymonat and T.~Weller.
\newblock Symmetry classes of piezoelectric solids.
\newblock {\em Comptes rendus de l'Acad\'emie des Sciences. S\'erie I},
  335:847--8524, 2002.

\bibitem{GSS1988}
M.~Golubitsky, I.~Stewart, and D.~G. Schaeffer.
\newblock {\em Singularities and groups in bifurcation theory. {V}ol. {II}},
  volume~69 of {\em Applied Mathematical Sciences}.
\newblock Springer-Verlag, New York, 1988.

\bibitem{GLS1981}
M.~Grötschel, L.~Lovász, and A.~Schrijver.
\newblock The ellipsoid method and its consequences in combinatorial
  optimization.
\newblock {\em Combinatorica}, 1(2):169--197, 1981.

\bibitem{Hav1935}
E.~K. Haviland.
\newblock On the momentum problem for distribution functions in more than one
  dimension.
\newblock {\em American Journal of Mathematics}, 57(3):562--568, 1935.

\bibitem{HL2003}
D.~Henrion and J.-B. Lasserre.
\newblock Glopti{P}oly: global optimization over polynomials with {M}atlab and
  {S}e{D}u{M}i.
\newblock {\em ACM Trans. Math. Software}, 29(2):165--194, 2003.

\bibitem{HLL2009}
D.~Henrion, J.-B. Lasserre, and J.~Löfberg.
\newblock Glopti{P}oly 3: moments, optimization and semidefinite programming.
\newblock {\em Optim. Methods Softw.}, 24(4-5):761--779, 2009.

\bibitem{JLR2008}
M.~L. J.-B.~Lasserre and P.~Rostalski.
\newblock Semidefinite characterization and computation of real radical ideals.
\newblock {\em Foundations of Computational Mathematics}, (8):607--647, 2008.

\bibitem{JP2001}
T.~Jacobi and A.~Prestel.
\newblock Distinguished representations of strictly positive polynomials.
\newblock {\em Journal für die reine und angewandte Mathematik},
  2001(532):223--235, jan 2001.

\bibitem{Jar1993}
F.~Jarre.
\newblock An interior-point method for minimizing the maximum eigenvalue of a
  linear combination of matrices.
\newblock {\em SIAM Journal on Control and Optimization}, 31(5):1360--1377,
  1993.

\bibitem{JLL2014}
V.~Jeyakumar, J.-B. Lasserre, and G.~Li.
\newblock On polynomial optimization over non-compact semi-algebraic sets.
\newblock {\em J. Optim. Theory Appl.}, 163(3):707--718, 2014.

\bibitem{KS2008}
M.~Kochetov and M.~A. Slawinski.
\newblock On obtaining effective transversely isotropic elasticity~tensors.
\newblock {\em Journal of Elasticity}, 94(1):1--13, oct 2008.

\bibitem{KS2009}
M.~Kochetov and M.~A. Slawinski.
\newblock On obtaining effective orthotropic elasticity tensors.
\newblock {\em The Quarterly Journal of Mechanics and Applied Mathematics},
  62(2):149--166, mar 2009.

\bibitem{Las2001}
J.-B. Lasserre.
\newblock Global optimization with polynomials and the problem of moments.
\newblock {\em SIAM J. Optim.}, 11(3):796--817, Jan. 2001.

\bibitem{Las2009}
J.-B. Lasserre.
\newblock {\em Moments, Positive Polynomials and Their Applications}.
\newblock Imperial College Press, oct 2009.

\bibitem{Las2015}
J.-B. Lasserre.
\newblock {\em An Introduction to Polynomial and Semi-Algebraic Optimization}.
\newblock Cambridge University Press, 2015.

\bibitem{Lau2005}
M.~Laurent.
\newblock Revisiting two theorems of \mbox{Curto} and \mbox{Fialkow} on moment
  matrices.
\newblock {\em Proceedings of the American Mathematical Society},
  133(10):2965--2976, 2005.

\bibitem{Lau2009}
M.~Laurent.
\newblock Sums of squares, moment matrices and optimization over polynomials.
\newblock In {\em Emerging applications of algebraic geometry}, volume 149 of
  {\em IMA Vol. Math. Appl.}, pages 157--270. Springer, New York, 2009.

\bibitem{LC1985}
J.~Lemaitre and J.-L. Chaboche.
\newblock {\em M\'ecanique des mat\'eriaux solides}.
\newblock Dunod, english translation 1990 'Mechanics of Solid Materials'
  Cambridge University Press, 1985.

\bibitem{Lof2004}
J.~Lofberg.
\newblock Yalmip: A toolbox for modeling and optimization in matlab.
\newblock pages 284--289, 2004.

\bibitem{LY2008}
D.~G. Luenberger and Y.~Ye.
\newblock Linear and nonlinear programming, vol. 116, 2008.

\bibitem{Man1962}
L.~N. Mann.
\newblock Finite orbit structure on locally compact manifolds.
\newblock {\em Michigan Mathematical Journal}, 9(1):87--92, Jan. 1962.

\bibitem{MTG2018}
S.~Manna, K.~R. Talley, P.~Gorai, J.~Mangum, A.~Zakutayev, G.~L. Brennecka,
  V.~Stevanovi{\'{c}}, and C.~V. Ciobanu.
\newblock Enhanced piezoelectric response of {AlN} via {CrN} alloying.
\newblock {\em Physical Review Applied}, 9(3):034026, mar 2018.

\bibitem{Mar2008}
M.~Marshall.
\newblock {\em Positive polynomials and sums of squares}.
\newblock Number 146. American Mathematical Soc., 2008.

\bibitem{MDC2019}
A.~Mattiello, R.~Desmorat, and J.~Cormier.
\newblock Rate dependent ductility and damage threshold: Application to
  nickel-based single crystal {CMSX}-4.
\newblock {\em International Journal of Plasticity}, 113:74--98, feb 2019.

\bibitem{MMRFS2008}
P.~Mayrhofer, D.~Music, T.~Reeswinkel, H.-G. Fuß, and J.~Schneider.
\newblock Structure, elastic properties and phase stability of
  {C}r$_{1-x}${A}l$_{x}$n.
\newblock {\em Acta Materialia}, 56(11):2469--2475, 2008.

\bibitem{Mev2007}
M.~Mevissen.
\newblock Introduction to concepts and advances in polynomial optimization.
\newblock {\em Review available at https://inf. ethz.
  ch/personal/fukudak/semi/optpast/FS07/opt\_abs/PolynomialOptimization. pdf,
  Institute for Operations Research, ETH, Zurich}, 2007.

\bibitem{MN2006}
M.~Moakher and A.~N. Norris.
\newblock The closest elastic tensor of arbitrary symmetry to an elasticity
  tensor of lower symmetry.
\newblock {\em Journal of Elasticity}, 85(3):215--263, 2006.

\bibitem{Mos1957}
G.~D. Mostow.
\newblock On a conjecture of montgomery.
\newblock {\em Annals of Mathematics}, 65(3):513--516, 1957.

\bibitem{NN1994}
Y.~Nesterov and A.~Nemirovskii.
\newblock {\em Interior-point polynomial algorithms in convex programming}.
\newblock SIAM, 1994.

\bibitem{NT1997}
Y.~E. Nesterov and M.~J. Todd.
\newblock Self-scaled barriers and interior-point methods for convex
  programming.
\newblock {\em Mathematics of Operations Research}, 22(1):1--42, 1997.

\bibitem{Nye1985}
J.~F. Nye.
\newblock {\em Physical Properties of Crystals}.
\newblock Oxford University Press, May 1985.

\bibitem{OA2014}
M.~Olive and N.~Auffray.
\newblock Isotropic invariants of a completely symmetric third-order tensor.
\newblock {\em Journal of Mathematical Physics}, 55(9):092901, sep 2014.

\bibitem{OA2021}
M.~Olive and N.~Auffray.
\newblock Symmetry classes in piezoelectricity from second-order symmetries.
\newblock {\em Mathematics and Mechanics of Complex Systems}, 9(1):77--105, mar
  2021.

\bibitem{OKDD2021}
M.~Olive, B.~Kolev, R.~Desmorat, and B.~Desmorat.
\newblock Characterization of the symmetry class of an elasticity tensor using
  polynomial covariants.
\newblock {\em Mathematics and Mechanics of Solids}, 27(1):144--190, may 2021.

\bibitem{OV1990}
A.~Onishchik and E.~Vinberg.
\newblock {\em Lie Groups and Algebraic Groups}.
\newblock Springer-Verlag, Berlin Heidelberg, 1990.

\bibitem{PS1985}
C.~Procesi and G.~Schwarz.
\newblock {I}nequalities defining orbit spaces.
\newblock {\em Invent. Math.}, 81(3):539--554, 1985.

\bibitem{Put1993}
M.~Putinar.
\newblock Positive polynomials on compact semi-algebraic sets.
\newblock {\em Indiana Univ. Math. J.}, 42(3):969--984, 1993.

\bibitem{Sch1989}
G.~W. Schwarz.
\newblock The topology of algebraic quotients.
\newblock In {\em Topological methods in algebraic transformation groups},
  pages 135--151. Springer, 1989.

\bibitem{Sch2005}
M.~Schweighofer.
\newblock Optimization of polynomials on compact semialgebraic sets.
\newblock {\em SIAM J. Optim.}, 15(3):805--825, 2005.

\bibitem{SB1997}
G.~Smith and G.~Bao.
\newblock Isotropic invariants of traceless symmetric tensors of orders three
  and four.
\newblock {\em International Journal of Engineering Science},
  35(15):1457--1462, dec 1997.

\bibitem{Spe1970}
A.~Spencer.
\newblock A note on the decomposition of tensors into traceless symmetric
  tensors.
\newblock {\em Int. J. Engng Sci.}, 8:475--481, 1970.

\bibitem{Stu1997}
J.~F. Sturm.
\newblock Primal-dual interior point approach to semidefinite programming,
  1997.

\bibitem{Stu1999}
J.~F. Sturm.
\newblock Using {SeDuMi} 1.02, a matlab toolbox for optimization over symmetric
  cones.
\newblock {\em Optimization Methods and Software}, 11(1-4):625--653, jan 1999.

\bibitem{Tod2001}
M.~J. Todd.
\newblock Semidefinite optimization.
\newblock {\em Acta Numerica}, 10:515--560, 2001.

\bibitem{VB1996}
L.~Vandenberghe and S.~Boyd.
\newblock Semidefinite programming.
\newblock {\em SIAM review}, 38(1):49--95, 1996.

\bibitem{Van2020}
R.~J. Vanderbei.
\newblock {\em Linear programming: foundations and extensions}, volume 285.
\newblock Springer Nature, 2020.

\bibitem{Via1997}
M.~Vianello.
\newblock An integrity basis for plane elasticity tensors.
\newblock {\em Arch. Mech. (Arch. Mech. Stos.)}, 49(1):197--208, 1997.

\bibitem{WSV2012}
H.~Wolkowicz, R.~Saigal, and L.~Vandenberghe.
\newblock {\em Handbook of Semidefinite Programming}, volume~27.
\newblock Springer US, Dec. 2012.

\bibitem{ZB1994}
Q.~S. Zheng and J.~P. Boehler.
\newblock The description, classification, and reality of material and physical
  symmetries.
\newblock {\em Acta Mechanica}, 102(1-4):73--89, mar 1994.

\bibitem{ZTP2013}
W.-N. Zou, C.-X. Tang, and E.~Pan.
\newblock Symmetry types of the piezoelectric tensor and their identification.
\newblock {\em Proceedings of the Royal Society A: Mathematical, Physical and
  Engineering Sciences}, 469(2155):20120755, 2013.

\end{thebibliography}
\end{document}